\newlength{\fighskip} \fighskip=2pt
\newlength{\figvskip} \figvskip=3pt
\newcommand*{\figbox}[2]{{
  \def\figscale{#1}
  \def\arraystretch{0.8}
  \arraycolsep=0pt
  \begin{array}{c}
    \vbox{\vskip\figscale\figvskip
      \hbox{\hskip\figscale\fighskip
        \includegraphics[scale=\figscale]{#2}}}
  \end{array}}}
\numberwithin{equation}{section}
\newcommand{\C}{\mathbb{C}}
\newcommand{\Z}{\mathbb{Z}}
\newcommand{\E}{{\mathcal E}}
\renewcommand{\H}{\mathbb{H}}
\newcommand{\g}{\mathfrak{g}}
\newcommand{\abracket}[1]{\left\langle#1\right\rangle}
\newcommand{\fbracket}[1]{\left\{#1\right\}}
\newcommand{\bracket}[1]{\left(#1\right)}
\newcommand{\pa}{\partial}
\newcommand{\OO}{{\mathcal O}}
\newcommand{\Ol}{\mathcal O_{loc}}
\DeclareMathOperator{\Sym}{Sym}
\DeclareMathOperator{\Tr}{Tr}
\newcommand{\A}{\mathcal A}
\theoremstyle{plain}
\newtheorem{thm}{Theorem}[section]
\newtheorem{thm-defn}{Theorem/Definition}[section]
\newtheorem{lem}[thm]{Lemma}
\newtheorem{lem-defn}[thm]{Lemma/Definition}
\newtheorem{prop}[thm]{Proposition}
\newtheorem{cor}[thm]{Corollary}
\newtheorem{example}[thm]{Example}
\newtheorem{defn}[thm]{Definition}
\theoremstyle{definition}
\newtheorem{notn}[thm]{Notation}
\newtheorem{eg}[thm]{Example}
\theoremstyle{remark}
\newtheorem{rmk}[thm]{Remark}
\begin{document}

\title[BV quantization of the RW model]{BV quantization of the Rozansky-Witten model}

\author[Chan]{Kwokwai Chan}
\address{Department of Mathematics\\ The Chinese University of Hong Kong\\ Shatin\\ Hong Kong}
\email{kwchan@math.cuhk.edu.hk}

\author[Leung]{Naichung Conan Leung}
\address{The Institute of Mathematical Sciences and Department of Mathematics\\ The Chinese University of Hong Kong\\ Shatin \\ Hong Kong}
\email{leung@math.cuhk.edu.hk}

\author[Li]{Qin Li}
\address{Department of Mathematics \\ Southern University of Science and Technology\\ Shenzhen\\China}
\email{liqin@sustc.edu.cn}

\begin{abstract}
We investigate the perturbative aspects of Rozansky-Witten's 3d $\sigma$-model \cite{RW} using Costello's approach to the Batalin-Vilkovisky (BV) formalism \cite{Kevin-book}. We show that the BV quantization (in Costello's sense) of the model, which produces a perturbative quantum field theory, can be obtained via the configuration space method of regularization due to Kontsevich \cite{Kontsevich} and Axelrod-Singer \cite{AS2}.
We also study the factorization algebra structure of quantum observables following Costello-Gwilliam \cite{Kevin-Owen}. In particular, we show that the cohomology of local quantum observables on a genus $g$ handle body is given by $H^*(X,(\wedge^*T_X)^{\otimes g})$ (where $X$ is the target manifold), and prove that the partition function reproduces the Rozansky-Witten invariants.
\end{abstract}

\maketitle


\section{Introduction}

In \cite{RW}, Rozansky and Witten discovered an interesting 3-dimensional $\sigma$-model whose target space is given by a hyperk\"ahler manifold $X$ (or more generally, a holomorphic symplectic manifold, as demonstrated shortly after by Kontsevich \cite{Kontsevich2} and Kapranov \cite{Kapranov}; see also the appendix of \cite{RW}) -- this is the famous {\em Rozansky-Witten model}. In particular, the perturbative expansion of the partition function, as a rigorously defined combinatorial sum over Feynman diagrams $\Gamma$:
\begin{equation}\label{eqn:RW_partition_function}
Z_X(M)=\sum_{\Gamma}b_\Gamma(X)I_{\Gamma}(M),
\end{equation}
gives rise to the {\em Rozansky-Witten weight system}, which provides a new construction of certain finite-type 3-manifold topological invariants (that turned out to be the Vassiliev invariants). Their work immediately opened up a new research direction and has generated a lot of work by both mathematicians and physicists \cite{Kontsevich2, Kapranov, Hitchin-Sawon, Sawon, Sawon2, Roberts-Sawon, Roberts-Willerton, Thompson, Habegger-Thompson, Kapustin-Saulina, Kapustin, Kapustin-Rozansky, Kapustin-Rozansky2, Qiu-Zabzine, Qiu-Zabzine2, K-Qiu-Zabzine, Teleman, Xu}.

Nevertheless, many questions concerning the Rozansky-Witten model remain unanswered. Among them is a conjecture by Rozansky and Witten themselves \cite{RW}*{Section 5} asserting that the Hilbert space $\mathcal{H}_g$ associated to a genus $g$ Riemann surface $\Sigma_g$ should be given by the cohomology group $H^*\left( X, \left(\wedge^*T_X\right)^{\otimes g} \right)$, where $X$ is the target compact hyperk\"ahler manifold and $T_X$ its holomorphic tangent bundle. Other important problems such as the construction of an (extended) topological quantum field theory (TQFT) from the Rozansky-Witten model \cite{Sawon2, Roberts-Sawon} and the closely related theory of boundary conditions \cite{Kapustin-Rozansky, Kapustin-Rozansky2, Teleman} are also currently under intensive investigation.

The main goal of the present paper is to apply the pioneering work of Costello \cite{Kevin-book} to construct a quantization of the Rozansky-Witten model in the {\em Batalin-Vilkovisky (BV) formalism} and investigate its observable theory making use of the recent foundational work of Costello-Gwilliam \cite{Kevin-Owen}. Besides providing yet another example where Costello's machinery can be applied, we expect that these new techniques and structures (in particular the structure of factorization algebra of observables) can shed new light on the study of the Rozansky-Witten model.

To apply Costello's (homological) method of renormalization, we shall focus only on the perturbative aspects of the Rozansky-Witten model. So instead of considering the full mapping space, we are going to describe the classical theory only in a formal neighborhood of the space of constant maps, and this will be done using the geometry of holomorphic Weyl bundles in Section \ref{sec:classical_theory}.

We then study the BV quantization of our model along the lines of \cite{Kevin-book}; such a quantization amounts to constructing an effective action (as a modification of the classical action functional) compatible with the {\em renormalization group (RG) flow} and satisfying the {\em quantum master equation (QME)} (the latter is a homological condition which essentially guarantees that the path integrals are well-defined).
The effective action will be defined as a sum over Feynman diagrams where the propagator is given by the Green kernel of $\frac{d^*}{\Delta}$ with respect to some a priori chosen Riemannian metric $g$ on the source 3-manifold $M$. In general, singularities may occur in the Feynman weights (owing to the infinite dimensional nature of the mapping space) and one needs to add counter terms to the action to remove the singularities.

For our model, however, we observe that the technique of configuration space developed by Kontsevich \cite{Kontsevich} and Axelrod-Singer \cite{AS2} in their study of perturbative Chern-Simons theory suffices.\footnote{This is certainly due to the similarity between the Chern-Simons theory and the Rozansky-Witten model, as pointed out by Rozansky and Witten in their original paper \cite{RW}.} As a result, we can construct a perturbative quantization of our model without dealing directly with the singularities and counter terms. In particular, we shall show that the so-called {\em naive quantization} (with certain local quantum corrections taken into account) already satisfies the QME, and hence the quantized theory we obtain is independent of the metric we chose. The details of the quantization process are contained in Section \ref{sec:quantization} (see in particular Theorems~\ref{theorem: QME} and \ref{theorem: QME2}.

Once our theory is quantized, we proceed to investigate the structure of {\em factorization algebra} of the classical and quantum observables, following the fundamental approach of Costello and Gwilliam developed in their recent two-volume book \cite{Kevin-Owen}. Both classical and quantum observables are defined as cochain complexes which are, roughly speaking, functionals on fields over open subsets in a 3-manifold $M$.

In this paper, we will focus on local quantum observables supported on open sets which are homeomorphic to (open) handle bodies $H_g\subset M$.
We show that the local quantum observables on $H_g$ do not receive any quantum corrections, and hence we have the following
\begin{thm}\label{thm:main1}
The cohomology of local quantum observables on a handle body $H_g$ is the same as that of local classical observables (except for a formal variable $\hbar$), i.e. we have an isomorphism of graded vector spaces:
$$
H^*\left(\text{Obs}^q(H_g),\hat{Q}\right)\cong H^*\left(\text{Obs}^{cl}(H_g),Q+\{I_{cl},-\}\right)[[\hbar]].
$$
\end{thm}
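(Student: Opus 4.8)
The plan is to prove the isomorphism by showing that the renormalization-group obstruction theory that controls the difference between the classical and quantum observables on $H_g$ collapses. Concretely, in Costello--Gwilliam's setup the quantum observable complex $(\mathrm{Obs}^q(H_g),\hat Q)$ is a deformation of $(\mathrm{Obs}^{cl}(H_g),Q+\{I_{cl},-\})[[\hbar]]$ by the $\hbar$-corrections built from the quantum BV operator $\Delta$ and the interaction $I[L]$; the associated graded (in the $\hbar$-adic filtration) is exactly the classical side tensored with $\C[[\hbar]]$. So there is a spectral sequence, or equivalently an order-by-order obstruction sequence in powers of $\hbar$, whose $E_1$ page is $H^*(\mathrm{Obs}^{cl}(H_g),Q+\{I_{cl},-\})[[\hbar]]$ and which converges to $H^*(\mathrm{Obs}^q(H_g),\hat Q)$. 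It suffices to show every differential $d_r$, $r\ge 1$, vanishes. Since each $d_r$ is $\C[[\hbar]]$-linear and raises $\hbar$-order by $r$, it is enough to show the first possible obstruction — the map induced by $\hbar\,\Delta$ together with $\{I_1[L],-\}$ — is already zero on cohomology, and then that the argument propagates to all higher orders.

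The key geometric input is the observation already established in the quantization section (Theorems~\ref{theorem: QME} and \ref{theorem: QME2}): on a handle body the propagator is the Green kernel for $d^*/\Delta$ and all the relevant Feynman integrals over configuration spaces of points in $H_g$ are, after passing to the Axelrod--Singer compactification, free of the boundary contributions that would produce genuine quantum corrections. So the first step is to record that the naive quantization restricted to $H_g$ has interaction $I[L]$ whose $\hbar$-linear part $I_1[L]$ is purely a \emph{local} functional (the one-loop counterterm fixed in Section~\ref{sec:quantization}), independent of $L$ up to $Q$-exact and RG-exact terms. The second step is to compute the BV Laplacian term: one shows $\Delta_L$ acting on observables supported in $H_g$ lands in the image of $Q+\{I_{cl},-\}$ up to terms that can be absorbed, using that $H_g$ retracts onto a wedge of circles and hence the de Rham-type complex of fields on $H_g$ has cohomology concentrated so that the relevant $\Delta$-pairings vanish on cohomology classes. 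Combining these two, the degree-one-in-$\hbar$ piece of $\hat Q$ is chain-homotopic to $Q+\{I_{cl},-\}$, so $d_1=0$.

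The third step is the induction: assuming the first $r-1$ obstructions vanish, one has a chosen quasi-isomorphism through order $\hbar^{r-1}$, and the order-$\hbar^r$ obstruction is again a sum of a $\Delta$-term and a bracket-with-$I$ term of the same shape as before; the same configuration-space/handle-body vanishing applies verbatim because nothing in that argument used the order in $\hbar$. Hence all $d_r$ vanish, the spectral sequence degenerates at $E_1$, and we get the claimed isomorphism of graded vector spaces $H^*(\mathrm{Obs}^q(H_g),\hat Q)\cong H^*(\mathrm{Obs}^{cl}(H_g),Q+\{I_{cl},-\})[[\hbar]]$. A final bookkeeping step is to check the filtration is complete and exhaustive so that $E_1$-degeneration actually yields the isomorphism and not merely an associated-graded statement; this is standard since the $\hbar$-adic filtration on $\C[[\hbar]]$-modules is complete.

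The main obstacle is the second step — showing the BV Laplacian $\Delta_L$ contributes nothing on cohomology over $H_g$. This is where one genuinely needs the geometry of the handle body (its homotopy type and the explicit form of the propagator from the quantization section) rather than formal homological algebra, and it is the place where a careless argument could miss a boundary term in the configuration-space integral. I would treat it carefully by reducing, via the homotopy equivalence $H_g\simeq \bigvee_g S^1$ and the metric-independence already proved, to a model computation where the vanishing is manifest, and then invoke RG-invariance to transport the conclusion back to arbitrary scale $L$.
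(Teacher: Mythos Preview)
Your spectral-sequence framework is reasonable, but there is a genuine gap in Step~2, and it is exactly the place you flagged as the main obstacle. You propose to show that $\hbar\Delta_L$ (together with the bracket against higher-loop pieces of $I[L]$) acts trivially on classical cohomology by appealing to the homotopy type $H_g\simeq\bigvee_g S^1$ and some unspecified ``model computation.'' That does not work as stated: the BV Laplacian contracts two inputs via the kernel $\mathbb{K}_L$, and there is no reason this pairing should vanish on cohomology classes just because the compactly supported cohomology of $H_g$ sits in degrees $2$ and $3$. The homotopy type alone does not control the analytic contraction.

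What the paper actually does is different and more constructive. It does not attempt to show abstract vanishing of spectral-sequence differentials. Instead it builds an explicit $\C[[\hbar]]$-linear \emph{splitting} at the chain level: take the quasi-isomorphic subcomplex $\mathcal{W}\otimes_{\mathcal{O}_X}(\wedge^*T_X^\vee)^{\otimes g}\hookrightarrow \mathrm{Obs}^{cl}(H_g)$ (with smooth representatives for the generators of $H^*_{cpt}(H_g)$), and send $O_\mu$ to the RG-flowed observable $O_\mu[L]:=W(\tilde{\mathbb{P}}_0^L, I_{cl}, O_\mu)$. The nontrivial lemma is that this map intertwines the differentials: applying $\hat{Q}$ to $O_\mu[L]$ and running the same Stokes-on-$\partial M[V]$ argument as in the QME proof, the primitive boundary integrals with $\geq 3$ colliding vertices vanish by Kontsevich's lemma, and the only surviving two-vertex boundary contributions are precisely those encoding $Q O_\mu+\{I_{cl},O_\mu\}$. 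Thus $\hat{Q}(O_\mu[L]) = (D O_\mu)[L]$ on the nose, with no $\hbar$-correction. This chain-level splitting forces the short exact sequences $0\to\hbar^{k+1}\mathrm{Obs}^q\to\hbar^k\mathrm{Obs}^q\to\mathrm{Obs}^{cl}\to 0$ to split on cohomology, and the isomorphism follows.

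So the configuration-space vanishing is indeed the engine, but it is applied to the \emph{observable} graphs (one vertex labeled by $O_\mu$, the rest by $I_{cl}$), not merely to the interaction graphs you cite from the QME section. Your proposal invokes the QME result as input but never re-runs the boundary analysis with the observable vertex inserted, and that is the missing step.
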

A direct computation of the cohomology of local classical observables via standard techniques in algebraic topology (see Section \ref{subsection:classical-observables}) then yields the following
\begin{cor}\label{cor:main}
The cohomology of local quantum observables on the handle body $H_g$ is given by
$$
H^*\left(\text{Obs}^q(H_g),\hat{Q}\right) \cong H^*(X,(\wedge^*T_X)^{\otimes g})[[\hbar]].
$$
\end{cor}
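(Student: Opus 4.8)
The plan is to reduce the statement to a purely classical computation and then carry it out ``by hand''. By Theorem~\ref{thm:main1} there is an isomorphism
$$
H^*\left(\text{Obs}^q(H_g),\hat{Q}\right)\;\cong\;H^*\left(\text{Obs}^{cl}(H_g),Q+\{I_{cl},-\}\right)[[\hbar]],
$$
so it is enough to establish $H^*\left(\text{Obs}^{cl}(H_g),Q+\{I_{cl},-\}\right)\cong H^*\bigl(X,(\wedge^*T_X)^{\otimes g}\bigr)$ and then extend scalars to $\C[[\hbar]]$. Here I recall, from Section~\ref{sec:classical_theory}, that over an open $U\subseteq M$ the classical fields $\mc{E}(U)$ are modelled on $\Omega^*(U)$ tensored with the Dolbeault resolution of the holomorphic Weyl bundle of $X$, and that $\text{Obs}^{cl}(U)=\widehat{\Sym}\bigl(\mc{E}_c(U)^\vee\bigr)$ is the associated completed Chevalley--Eilenberg complex, whose differential is assembled from the de Rham operator on $U$, the $\dbar$-operator and the Fedosov connection on the Weyl bundle, and the $L_\infty$-brackets carried by $I_{cl}$ (built from the Atiyah class of $X$ and the holomorphic symplectic pairing).

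The first real step is the topological input. The handle body $H_g$ deformation retracts onto a wedge of $g$ circles, so its de Rham cohomology is concentrated in degrees $0$ and $1$, with $H^0(H_g)\cong\C$ and $H^1(H_g)\cong\C^{g}$. Choosing a Riemannian metric on $H_g$ and applying formal Hodge theory, the inclusion of harmonic forms $\mc{H}^*(H_g)\into\Omega^*(H_g)$ is a deformation retract of cochain complexes; transporting the $L_\infty$-structure along it by homotopy transfer yields a quasi-isomorphism of $\text{Obs}^{cl}(H_g)$ with the completed Chevalley--Eilenberg complex of the curved $L_\infty$-algebra $H^*(H_g)\otimes\g_X$, where $\g_X$ is the Gelfand--Kazhdan/Weyl-bundle model for the Rozansky--Witten target. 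Thus $\text{Obs}^{cl}(H_g)$ is, up to quasi-isomorphism, built from one copy of $\g_X$ together with $g$ degree-shifted copies of $\g_X$, one for each generator of $H^1(H_g)$.

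Next I identify this minimal model with a Dolbeault complex on $X$. The ``$H^0$'' copy $\g_X$ is exactly the formal-neighbourhood-of-constant-maps algebra: Fedosov flatness of the Weyl-bundle connection collapses the jet directions, and the Atiyah-class term is cohomologically harmless (this is essentially Kapranov's computation \cite{Kapranov}), so this factor contributes $H^*(X,\OO_X)$, i.e. the $\dbar$-cohomology of $\Omega^{0,*}(X)$. On each of the $g$ shifted copies the parity flip turns the relevant symmetric algebra into an \emph{exterior} algebra on the tangent directions; using the holomorphic symplectic form $\omega$ to identify $T_X\cong\Omega^1_X$, that factor contributes $H^*(X,\wedge^*T_X)$. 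Assembling the factors — they multiply in the obvious way since everything is a module over the Dolbeault complex $\Omega^{0,*}(X)$ — identifies the minimal model with the Dolbeault complex of $X$ with values in the locally free sheaf $(\wedge^*T_X)^{\otimes g}$ (with the empty product read as $\OO_X$ when $g=0$), and the Dolbeault theorem gives $H^*\bigl(X,(\wedge^*T_X)^{\otimes g}\bigr)$. Combined with the reduction of the first paragraph, this is the claimed isomorphism.

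I expect the main obstacle to be the assertion in the third step that, after homotopy transfer, the differential on $H^*(H_g)\otimes\g_X$ reduces in cohomology to the plain $\dbar$-operator on $\Omega^{0,*}\bigl(X,(\wedge^*T_X)^{\otimes g}\bigr)$ — i.e. that neither the transferred higher brackets nor the Atiyah-class contributions coming from $I_{cl}$ (which genuinely mix the $g$ handles) survive in cohomology. The way I would handle this is to filter $\text{Obs}^{cl}(H_g)$ by polynomial degree in the fields (or by Dolbeault weight) and show that the resulting spectral sequence degenerates after the page on which only $\dbar$ acts, using flatness of the Fedosov connection to collapse the Weyl-bundle jet directions to $\OO_X$ and Kapranov's formality-type statement to discard the Atiyah-class twist; with that in hand the handle-body topology and the identification $T_X\cong\Omega^1_X$ via $\omega$ are routine. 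These computations are carried out in detail in Section~\ref{subsection:classical-observables}.
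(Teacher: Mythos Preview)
Your overall strategy coincides with the paper's: invoke Theorem~\ref{thm:main1} to reduce to the classical observables, then compute $H^*(\text{Obs}^{cl}(H_g))$ by collapsing the de~Rham direction on $H_g$ and identifying what remains with a Dolbeault complex on $X$. The endpoint and the key ingredients (Fedosov flatness, the Weyl bundle resolving $\OO_X$, the isomorphism $T_X\cong T_X^\vee$ via $\omega$) are the same.

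The execution differs in a way worth noting. You work on the \emph{fields} side, proposing to homotopy-transfer the $L_\infty$ structure on $\A(H_g)\otimes\g_X$ onto $H^*(H_g)\otimes\g_X$ (ordinary de~Rham cohomology, concentrated in degrees $0,1$) and then take Chevalley--Eilenberg cochains. The paper instead works directly on the \emph{observables} side: since $\E(H_g)^\vee\cong\bar{\E}_{cpt}(H_g)$ via the symplectic pairing, the relevant topology is the \emph{compactly supported} cohomology $H^*_{cpt}(H_g)$, concentrated in degrees $2,3$ (Lemma~\ref{lemma:compactly-supported-cohomology-handle-body}). A filtration by $\A_X$-degree then gives a spectral sequence whose $E_1$ page is already $\mathcal{W}\otimes_{\OO_X}(\wedge^*T_X^\vee)^{\otimes g}$ with its Fedosov differential $D$ (Proposition~\ref{proposition:quasi-iso-observable-jet}), and Proposition~\ref{prop:weyl-bundle-quasi-isomorphic-Dolbeault} finishes the job. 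These two pictures are Poincar\'e--Lefschetz dual, so your approach is not wrong, but it incurs two costs: you must justify a deformation retract of $\Omega^*(H_g)$ onto cohomology on an \emph{open} manifold (your appeal to ``formal Hodge theory'' does not suffice as stated; one should instead use that $H_g$ retracts onto a wedge of circles, or simply pass to the dual and use Atiyah--Bott as the paper does), and you must then control the transferred higher brackets. The paper's spectral-sequence route sidesteps both issues cleanly, since on the $E_1$ page the only surviving differential is already $D$ and no transfer is needed. Your closing paragraph correctly anticipates exactly this obstacle and proposes the same filtration argument the paper actually uses; so the gap is more one of presentation than of substance. (Minor slip: you write $\text{Obs}^{cl}(U)=\widehat{\Sym}(\E_c(U)^\vee)$; the paper has $\widehat{\Sym}(\E(U)^\vee)$, which is then identified with $\widehat{\Sym}(\bar{\E}_{cpt}(U))$ --- the compact support lands on the observables, not the fields.)
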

The details and precise statements can be found in Sections \ref{subsection:classical-observables}-\ref{subsection:local-quantum-observables} (see, in particular, Theorems \ref{thm:local-classical-observable} and \ref{thm:classical-to-quantum-observables}).

\begin{rmk}
As suggested in \cite{Kevin-Owen}, the cohomology of local quantum observables gives an equivalent definition of the physical Hilbert space $\mathcal{H}_g$ associated to a genus $g$ Riemann surface $\Sigma_g$, so in a sense the above corollary verifies the conjecture of Rozansky-Witten we alluded to above.

On the other hand, as pointed out by an anonymous referee, the work of Ayala-Francis \cite{Ayala-Francis15} suggests that for the 3-manifold $\Sigma_g \times \mathbb{R}$, the factorization algebra structure on the cohomology of quantum observables should produce an associative algebra. Corollary~\ref{cor:main} and the excision axiom would then provide an algebraic way to compute the quantum observables on a closed 3-manifold.
In view of this, our results might also be useful in constructing the (fully extended) TQFT underlying the Rozansky-Witten model. We plan to investigate this in a future work.
\end{rmk}

As explained in \cite{Kevin-CS}, a BV quantization produces a {\em projective volume form} on the space of global quantum observables, with which one can define correlation functions of quantum observables.
In particular, this defines the {\em partition function}, namely, the correlation function $\langle 1\rangle_M^X$ of the constant functional $1$, of our model. We prove that our partition function agrees with the original one \eqref{eqn:RW_partition_function} computed by Rozansky and Witten \cite{RW} (so that both give rise to the Rozansky-Witten invariants):
\begin{thm}\label{thm:main2}
The partition function of our model with domain $M$ and target $X$ coincides with the Rozansky-Witten partition function \eqref{eqn:RW_partition_function}:
$$\langle 1\rangle_M^X = Z_X(M).$$
\end{thm}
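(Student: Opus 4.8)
The plan is to compute the partition function $\langle 1\rangle_M^X$ directly from the definition of the projective volume form on global quantum observables and match it term-by-term against the Feynman-diagram expansion $Z_X(M) = \sum_\Gamma b_\Gamma(X) I_\Gamma(M)$. First I would recall that, by the construction in Section \ref{sec:quantization}, the effective action $I[L]$ of our BV-quantized theory is itself a sum over Feynman graphs $\Gamma$ built from the propagator $P(L) = \frac{d^*}{\Delta}$-kernel on $M$ and the classical interaction vertices coming from the holomorphic symplectic structure on $X$ (the curvature/Yukawa-type vertices of the Weyl-bundle formulation in Section \ref{sec:classical_theory}). Taking the $L \to \infty$ limit, the propagator becomes the configuration-space-regularized form of Kontsevich \cite{Kontsevich} and Axelrod-Singer \cite{AS2}, so the analytic factor attached to each graph is exactly the configuration space integral $I_\Gamma(M)$, while the combinatorial contraction of the vertex tensors over $X$ produces precisely the weight $b_\Gamma(X) \in H^*(X,\OO_X)$ (after taking cohomology). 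This is the heart of the identification: our vertices are designed in Section \ref{sec:classical_theory} to reproduce the Rozansky-Witten weight system, so one must check that the graph-by-graph dictionary — edges $\leftrightarrow$ propagators, trivalent vertices $\leftrightarrow$ the symplectic-curvature tensor — coincides on the nose with the one in \cite{RW}.

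The second ingredient is that $\langle 1 \rangle_M^X$ is, by \cite{Kevin-CS} and as recalled in the paragraph preceding the theorem, obtained by integrating the constant functional $1$ against the projective volume form determined by the BV quantization; concretely this is the value of (the cohomology class of) $\exp(I[\infty]/\hbar)$ paired with the canonical element, equivalently the "vacuum" part of the effective action, i.e. the sum over all \emph{closed} (vacuum) Feynman graphs with no external legs. So I would: (i) unwind the definition of the projective volume form to express $\langle 1\rangle_M^X$ as a sum over closed graphs $\Gamma$ of $\hbar^{-\chi(\Gamma)}\cdot(\text{analytic factor})\cdot(\text{combinatorial/Lie factor})$; (ii) identify the analytic factor with $I_\Gamma(M)$ using the configuration space regularization established in Theorems \ref{theorem: QME} and \ref{theorem: QME2}, noting that the QME guarantees metric-independence so the integrals are the honest topological $I_\Gamma(M)$; (iii) identify the combinatorial factor with $b_\Gamma(X)$ by comparing the vertex tensors of our Weyl-bundle action with the Rozansky-Witten weight system, which amounts to a Lie-algebra (or $L_\infty$) cohomology computation on $X$ using that $H^*(X,\wedge^\bullet T_X)$ carries the relevant structure; (iv) match the $\hbar$-powers, using that $1$-loop and higher contributions are graded by Euler characteristic in both conventions. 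Finally I would invoke Corollary \ref{cor:main} with $g=0$ (the partition function lives in $H^*(\mathrm{Obs}^q(M)) = H^*(X,\OO_X)[[\hbar]]$, which for $X$ holomorphic symplectic reduces to scalars) to see that both sides are genuinely numbers and the equality is an equality of formal power series in $\hbar$.

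The main obstacle I anticipate is step (iii): pinning down the precise normalization of the vertices and propagators so that the combinatorial weights agree with $b_\Gamma(X)$ \emph{including all signs and symmetry factors}. The Weyl-bundle / BV formalism naturally produces the interaction as a Chevalley-Eilenberg-type cocycle valued in the de Rham complex of $M$, and extracting from it the "IHX"-respecting trivalent weight system of \cite{RW} requires carefully tracking how the holomorphic symplectic form $\omega$ is used to raise/lower indices and how the automorphism group of each graph acts; a sign error here would be invisible in the analytic factor but fatal for the final identity. A secondary technical point is justifying that only the configuration space integrals $I_\Gamma(M)$ appear with no leftover counterterm contributions — this is precisely the content of the claim (already established earlier in the paper) that the naive quantization with local corrections satisfies the QME, so I would cite Theorems \ref{theorem: QME} and \ref{theorem: QME2} rather than redo that analysis. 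Once the dictionary is fixed, the remaining bookkeeping — reassembling $\sum_\Gamma b_\Gamma(X) I_\Gamma(M)$ — is routine.
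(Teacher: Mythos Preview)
Your proposal misidentifies the mechanism by which $\langle 1\rangle_M^X$ is computed, and this is a genuine gap. The correlation function in this paper is \emph{not} the sum over closed (vacuum) graphs with no external legs. By definition, $\langle 1\rangle_M^X$ is obtained by restricting $e^{I[\infty]/\hbar}$ to the harmonic fields $\H$, projecting to the top-fermion line spanned by $\Omega_X^{\otimes(b_1(M)+1)}$ in the BV bundle (this is the content of the quasi-isomorphism in Proposition~\ref{prop:quasi-iso-jet-de-rham}), and then integrating the resulting de Rham class over $X$. The Feynman graphs appearing in $e^{I[\infty]/\hbar}$ have \emph{tails}, and those tails absorb harmonic forms on $M$; the top-fermion projection is what ties them up. So steps (i)--(iv) of your plan are aimed at the wrong object.

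The substantive content of the proof, which your outline does not touch, is the reduction showing that only Rozansky and Witten's ``minimal Feynman diagrams'' survive this projection. Concretely one must prove: (a) any connected graph with $|V(\gamma)|\geq 2$ and a harmonic $3$-form on a tail has vanishing Feynman weight on $\H$; (b) hence the $\tilde{l}_0$ vertices alone supply the $(1,0)$-forms and the remaining vertices supply $(0,1)$-forms (so only the Atiyah class enters the combinatorics); (c) counting constraints on $|V|$, $|E|$, $|T|$ force every surviving graph to be trivalent with $|E(\gamma)|=(3-b_1)n$, whence the partition function vanishes for $b_1(M)>3$; and (d) a case-by-case enumeration for $b_1(M)=0,1,2,3$ matches the graphs in \cite{RW}. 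Once this reduction is done, the identification of the analytic factor with $I_\Gamma(M)$ and of the combinatorial factor with $b_\Gamma(X)$ (via Kapranov's description of Rozansky--Witten classes through the Atiyah class) is indeed routine, so your worry about signs and symmetry factors in step (iii) is secondary. Finally, your appeal to Corollary~\ref{cor:main} with $g=0$ is misplaced: that statement concerns \emph{local} observables on a handle body, not the global observables on a closed $M$ that govern the partition function.
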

See Section \ref{subsection:correlation-function} and Theorem \ref{thm:partition-fcn-RW} for more details.

\section*{Acknowledgement}
We would like to thank Kevin Costello, Owen Gwilliam and Si Li for their interest in our work and also many helpful discussions. We also thank the anonymous referees for carefully reading an earlier version of our manuscript and giving a lot of useful comments and suggestions.

The work of K. Chan described in this paper was substantially supported by grants from the Research Grants Council of the Hong Kong Special Administrative Region, China (Project No. CUHK404412 $\&$ CUHK14300314). The work of N. C. Leung described in this paper was substantially supported by grants from the Research Grants Council of the Hong Kong Special Administrative Region, China (Project No. CUHK14302714 $\&$ CUHK14032215) and partially supported by a direct grant from CUHK. The work of Q. Li in this paper was supported by a grant from National Natural Science Foundation of China  for young scholars (Project No. 11501537).

\section{Classical theory}\label{sec:classical_theory}

Let $M$ be a closed 3-dimensional manifold, and let $X$ be a complex manifold equipped with a non-degenerate holomorphic $2$-form $\omega$. The original Rozansky-Witten model \cite{RW} is a supersymmetric $\sigma$-model with bosonic fields given by the space of smooth maps from $M$ to $X$ and fermionic fields given by the space of sections of certain bundles over $M$. (We will recall its field content and Lagrangian in Section \ref{subsection:comparison-physical-model} below.)

A mathematical framework of the perturbative theory of $\sigma$-models was proposed by Costello in \cite{Kevin-CS}, within which we give a definition of the classical theory of the Rozansky-Witten model, using the geometry of the holomorphic Weyl bundle on $X$. We remark that our formulation is based on the ingenious idea of Kapranov \cite{Kapranov} that a complex manifold can be encoded as an $L_\infty$-space via the Atiyah class of its holomorphic tangent bundle; see also \cite[Section B]{Qiu-Zabzine}.


\subsection{Holomorphic Weyl bundle}
In this subsection, we give a description of the geometry of the holomorphic Weyl bundle on $X$.

\begin{defn}
Let $X$ be a holomorphic symplectic manifold. The {\em holomorphic Weyl bundle} on $X$ is defined as:
\begin{equation*}
\mathcal{W} := \A_X^*\otimes_{\OO_X}\widehat{\Sym}\left(T_X^{\vee}\right)[[\hbar]],
\end{equation*}
where $T_X^\vee$ denotes the holomorphic cotangent bundle on $X$.
\end{defn}

More explicitly, a section of the holomorphic Weyl bundle $\mathcal{W}$ is locally of the following form:
$$
\sum_{k\geq 0}\alpha_{i_1\cdots i_k}\delta_{z}^{i_1}\cdots\delta_{z}^{i_k},
$$
where $\alpha_{i_1\cdots i_k}$'s are differential forms on $X$, and $\delta_z^{i}$'s are local basis of $T_X^\vee$ with respect to local holomorphic coordinates $z=(z^1,\cdots, z^n)$. We will call the sub-bundle $\A_X^{p,q}\otimes_{\mathcal{O}_X}\Sym^r\left(T_X^\vee\right) $ of $\mathcal{W}$ its $(p,q,r)$ component, and we will let $r$ be its {\em weight}. We will also assign $\hbar$ a weight of $2$.

Similar to the Weyl bundle of a real symplectic manifold (see e.g. \cite{Fed} for more details), there is a quantum Weyl product on $\mathcal{W}$ induced by the inverse to the holomorphic symplectic form. In local coordinates, this product can be written as:
$$
\alpha\circ \beta := \sum_{k\geq 1}\frac{1}{k!}\left(\frac{\hbar}{2}\right)^k\omega^{i_1j_1} \cdots \omega^{i_kj_k}
\frac{\partial^k\alpha}{\partial\delta_{z}^{i_1} \cdots \partial\delta_{z}^{i_k}}
\frac{\partial^k\beta}{\partial\delta_{z}^{j_1} \cdots \partial\delta_{z}^{j_k}}.
$$
In particular, there is an associated bracket on $\mathcal{W}$ which we denote by $[-,-]_{\mathcal{W}}$. Similar to the real Weyl bundle, we can define the following operators:
\begin{defn}
We define  the following operators on the Weyl bundle $\mathcal{W}$:
$$
\delta(a)=dz^i\wedge\frac{\partial a}{\partial \delta_z^i},\qquad \delta^*(a)=\delta_{z}^i\cdot \iota_{\partial_{z^i}}(a).
$$
\end{defn}
Moreover, we have the operator $\delta^{-1}:=\frac{1}{p+r}\delta^*$ on the $(p, q,r)$ component of the Weyl bundle. We will also let $\pi_0$ denote the projection from the whole Weyl bundle $\mathcal{W}$ onto its $(0,*,0)$ component. Similar to the Weyl bundle on a real symplectic manifold, there is the following lemma:

\begin{lem}
We have the following identities on the holomorphic Weyl bundle:
\begin{equation}\label{eqn: identity-delta-inverse}
\delta\circ \delta^{-1}+\delta^{-1}\circ\delta=id+\pi_0,
\end{equation}
\begin{equation}\label{eqn:delta-delta-inverse-square-0}
\delta^2=(\delta^{-1})^2=0.
\end{equation}
The operator $\delta$ can be expressed as the following bracket:
\begin{equation}\label{eqn:delta-bracket}
\delta=\frac{1}{\hbar}[\omega_{ij}dz^i\otimes\delta_z^j,-]_{\mathcal{W}}.
\end{equation}
\end{lem}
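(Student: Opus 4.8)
The plan is to reduce all three identities to pointwise computations in a local holomorphic coordinate chart and then to run the same combinatorics as in Fedosov's treatment of the real Weyl bundle, with the holomorphic symplectic form $\omega$ and its inverse $\omega^{ij}$ in the roles of the symplectic form. Since $\mathcal{W}$ is built fiberwise from the symmetric algebra on $T_X^\vee$, it suffices to fix local holomorphic coordinates $z=(z^1,\dots,z^n)$ and to test each operator on a single monomial $m = dz^I\wedge d\bar z^K\otimes\delta_z^{j_1}\cdots\delta_z^{j_r}$ with $|I|=p$; the antiholomorphic part $d\bar z^K$ is inert under $\delta$, $\delta^*$, $\delta^{-1}$ and under the Weyl product (which differentiates only the $\delta_z$'s), so it plays no role and may be suppressed.

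For \eqref{eqn:delta-delta-inverse-square-0} I would argue directly: applying $\delta$ twice to $m$ produces a sum of terms $dz^a\wedge dz^b\wedge(\partial^2 m/\partial\delta_z^a\partial\delta_z^b)$, which vanishes because $dz^a\wedge dz^b$ is antisymmetric in $(a,b)$ while the iterated symmetric derivative is symmetric; similarly applying $\delta^*$ twice produces $\delta_z^a\,\delta_z^b\,\iota_{\partial_{z^a}}\iota_{\partial_{z^b}}(m)$, which vanishes because the interior products anticommute while the generators $\delta_z^a$ commute. As $\delta^{-1}$ is a scalar multiple of $\delta^*$ on each $(p,q,r)$-component, $(\delta^{-1})^2 = 0$ follows at once.

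For \eqref{eqn: identity-delta-inverse} I would expand $\delta\delta^* m$ and $\delta^*\delta m$ term by term on a monomial of type $(p,q,r)$. In $\delta\delta^* m$ the terms in which $\delta$ turns back into a $dz$ the symmetric factor that $\delta^*$ has just created contribute $p\cdot m$; in $\delta^*\delta m$ the terms in which $\delta^*$ turns back into a $\delta_z$ the $dz$-factor that $\delta$ has just created contribute $r\cdot m$; the remaining ``off-diagonal'' terms, where the two operators act on distinct factors, cancel in pairs between the two compositions once Koszul signs are matched. Hence $(\delta\delta^*+\delta^*\delta)|_{(p,q,r)}=(p+r)\,\mathrm{id}$. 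Since $\delta^{-1}$ acts as $\tfrac{1}{p+r}\delta^*$ on the source component of each composition, dividing by $p+r$ on every component of positive weight and tracking the remaining $(0,q,0)$-part — on which both $\delta$ and $\delta^{-1}$ vanish — produces the projection term $\pi_0$ and yields \eqref{eqn: identity-delta-inverse}.

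For \eqref{eqn:delta-bracket} I would set $\alpha = \omega_{ij}\,dz^i\otimes\delta_z^j$, which is linear in the generators $\delta_z$, so $\partial^k\alpha/\partial\delta_z^{a_1}\cdots\partial\delta_z^{a_k}=0$ for $k\geq 2$; consequently only the $k=1$ term of the Weyl product survives in the graded commutator $[\alpha,m]_{\mathcal{W}}$ and the a priori infinite sum terminates. Carrying out the single contraction, using that $\omega^{ij}$ is the inverse of $\omega_{ij}$ and keeping track of the Koszul sign incurred when the odd factor $dz^i$ is moved past the form part of $m$, the two Moyal contributions combine to give exactly $\hbar\, dz^i\wedge(\partial m/\partial\delta_z^i)=\hbar\,\delta(m)$. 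The one point requiring genuine care throughout is this sign bookkeeping — the Koszul signs from permuting the odd generators $dz^i$, $d\bar z^j$ inside the Weyl product and inside $\delta$, $\delta^*$, together with the precise normalization of $\omega^{ij}$ relative to $\omega_{ij}$ — which is what pins the constants (and the sign of the correction term in \eqref{eqn: identity-delta-inverse}) to their stated values; there is no conceptual obstacle.
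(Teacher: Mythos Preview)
The paper does not supply a proof of this lemma; it is stated as the holomorphic analogue of the standard Fedosov identities on the real Weyl bundle and left to the reader. Your argument is exactly the expected one --- localize, test on monomials, and run the Koszul/Cartan calculus --- and is correct in substance.

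One point worth flagging: your own computation for \eqref{eqn: identity-delta-inverse} gives $(\delta\delta^*+\delta^*\delta)|_{(p,q,r)}=(p+r)\,\mathrm{id}$, and since $\delta$ maps $(p,q,r)\to(p+1,q,r-1)$ while $\delta^*$ maps $(p,q,r)\to(p-1,q,r+1)$, the scalar $\tfrac{1}{p+r}$ in $\delta^{-1}$ is the same in both compositions. Hence $\delta\delta^{-1}+\delta^{-1}\delta=\mathrm{id}$ on components with $p+r>0$ and $0$ on the $(0,*,0)$ component, i.e.\ the identity you actually derive is $\delta\delta^{-1}+\delta^{-1}\delta=\mathrm{id}-\pi_0$, not $\mathrm{id}+\pi_0$. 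This is the standard Fedosov form, and it is also the version the paper itself uses downstream (for instance in deducing $\delta^{-1}\delta\alpha=\alpha-\alpha_0$ in the proof that $(\mathcal{W},D)$ is quasi-isomorphic to the Dolbeault complex). So the sign in the displayed lemma is evidently a typo; your proof is fine, but it proves the corrected statement rather than the one printed.
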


We would like to construct a flat connection on $\mathcal{W}$. We first pick a connection $\nabla$ on the holomorphic tangent bundle $T_X$ (and naturally induced on $T_X^\vee$) satisfying the following conditions:
\begin{enumerate}
 \item $\nabla$ is compatible with the complex structure,
 \item $\nabla$ is torsion free,
 \item $\nabla$ is compatible with the holomorphic symplectic form $\omega$.
\end{enumerate}
\begin{notn}
By abuse of notations, we will also use $\nabla$ to denote the associated exterior covariant derivative on the Weyl bundle.
\end{notn}

Clearly $\nabla$ is in general not flat on $\mathcal{W}$, and its curvature consists of both $(2,0)$- and $(1,1)$-parts, as follows:
$$
\nabla^2=R_{ijk}^ldz^i\wedge dz^j\otimes(dz^k\otimes \frac{\partial}{\partial z^l})+R_{\bar{i}jk}^ld\bar{z}^i\wedge dz^j\otimes(dz^k\otimes \frac{\partial}{\partial z^l}).
$$
Let $R_{ijkl}:=\omega_{ml}R_{ijk}^m$ and $R_{\bar{i}jkl}:=\omega_{ml}R_{\bar{i}jk}^m$, and we can define the following section of $\mathcal{W}$:
$$
R:=R_{ijkl}dz^i\wedge dz^j\otimes(\delta_z^k\delta_z^l)+R_{\bar{i}jkl}d\bar{z}^i\wedge dz^j\otimes(\delta_z^k\delta_z^l)
$$
The following lemma is clear:
\begin{lem}
The curvature of the connection $\nabla$ can be expressed as the following bracket:
$$
\nabla^2=\frac{1}{\hbar}[R,-]_{\mathcal{W}}.  
$$
Moreover, $R$ satisfies the property:
\begin{equation}\label{eqn:delta-inverse-annihilates-R}
\delta(R)=0,
\end{equation}
which follows from the Bianchi identity of the curvature tensor.
\end{lem}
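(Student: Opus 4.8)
The plan is to establish the two parts of the lemma by a direct computation in local holomorphic coordinates: the first part is a Fedosov-type identity comparing the adjoint action of $R$ in the Weyl algebra with the curvature endomorphism, and the second is just the Bianchi identity in disguise.

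For the identity $\nabla^2=\frac{1}{\hbar}[R,-]_{\mathcal{W}}$ (mod $\hbar$), I would first record that the leading $\hbar$-order term of the Weyl bracket is a fixed constant multiple of the Poisson bracket, i.e. $\frac{1}{\hbar}[\alpha,\beta]_{\mathcal{W}}\equiv c\,\omega^{ij}\,\frac{\partial\alpha}{\partial\delta_z^i}\frac{\partial\beta}{\partial\delta_z^j}$ modulo $\hbar$, where the constant $c$ and all sign conventions are pinned down by \eqref{eqn:delta-bracket} (there the same bracket, taken against the $\delta_z$-linear element $\omega_{ij}dz^i\otimes\delta_z^j$, is required to reproduce $\delta$). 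Since $R$ is quadratic in the generators $\delta_z^k$, the operator $\frac{1}{\hbar}[R,-]_{\mathcal{W}}$ (mod $\hbar$) is a derivation of the fibrewise algebra $\widehat{\Sym}(T_X^\vee)$, hence is determined by its action on the generators $\delta_z^k$. I would then compute $\partial R/\partial\delta_z^i$ using the symmetries $R_{ijkl}=R_{ijlk}$ and $R_{\bar i jkl}=R_{\bar i jlk}$ --- which hold because $\nabla$ preserves $\omega$, so that $R_{ijkl}$ and $R_{\bar i jkl}$ are symmetric in $(k,l)$ --- contract with $\omega^{ij}$, and raise the lowered index back up with $\omega^{-1}$; the outcome is precisely the endomorphism-valued two-form appearing in the stated expression for $\nabla^2$, evaluated on the generators, which is what we want.

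For $\delta(R)=0$, I would expand $\delta(R)=dz^p\wedge(\partial R/\partial\delta_z^p)$ (renaming the dummy index to $p$ to avoid clash with the indices in $R$). Using the $(k,l)$-symmetry again, $\partial R^{(2,0)}/\partial\delta_z^p$ is a multiple of $R_{ijpl}\,dz^i\wedge dz^j\otimes\delta_z^l$, so $\delta(R^{(2,0)})$ is a multiple of $R_{ijpl}\,dz^p\wedge dz^i\wedge dz^j\otimes\delta_z^l$; wedging the coefficient against $dz^p\wedge dz^i\wedge dz^j$ extracts its totally skew part in the indices $(i,j,p)$, which is a multiple of the cyclic sum $R^m_{ijp}+R^m_{jpi}+R^m_{pij}$, and this vanishes by the first (algebraic) Bianchi identity for the torsion-free connection $\nabla$. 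For the $(1,1)$-component the same manipulation yields a multiple of $R_{\bar i jpl}\,dz^p\wedge d\bar z^i\wedge dz^j\otimes\delta_z^l$, and now only the two holomorphic form-directions $dz^p$ and $dz^j$ get antisymmetrized; but $R^m_{\bar i jp}$ is symmetric in its two holomorphic lower slots $(j,p)$ because $\nabla$ is compatible with the complex structure and torsion-free (in holomorphic coordinates its Christoffel symbols $\Gamma^m_{jp}$ are symmetric in $j,p$, and $R^m_{\bar i jp}$ equals, up to sign, $\partial_{\bar z^i}\Gamma^m_{jp}$), so this term vanishes as well. Hence $\delta(R)=0$.

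The only genuinely structural inputs are the $\omega$-compatibility of $\nabla$ (giving the symmetry of $R_{ijkl}$ and $R_{\bar i jkl}$ in their last two slots) and the first Bianchi identity together with torsion-freeness; everything else is index bookkeeping and keeping track of combinatorial constants, exactly as in Fedosov's construction for real symplectic manifolds (cf. \cite{Fed}). I do not expect a real obstacle --- the only thing requiring some care is to feed each of the two symmetries into the correct pair of indices when the $(2,0)$- and $(1,1)$-sectors are separated.
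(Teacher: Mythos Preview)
Your proposal is correct and essentially what the paper has in mind; the paper itself offers no proof beyond the phrase ``The following lemma is clear'' together with the hint that $\delta(R)=0$ ``follows from the Bianchi identity'', so you are simply filling in the omitted computation. One minor remark: for the $(1,1)$-piece you deduce the symmetry of $R^m_{\bar i jp}$ in $(j,p)$ by writing $R^m_{\bar i jp}=\partial_{\bar z^i}\Gamma^m_{jp}$, which is fine once you note that compatibility with the complex structure plus torsion-freeness forces the mixed Christoffel symbols $\Gamma^m_{\bar i j}$ to vanish; alternatively, and perhaps more in keeping with the paper's phrasing, you can get the same symmetry directly from the first Bianchi identity $R^m_{\bar i jk}+R^m_{jk\bar i}+R^m_{k\bar i j}=0$ together with $R^m_{jk\bar i}=0$ (since $\nabla$ preserves the holomorphic/antiholomorphic splitting).
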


\begin{rmk}
In the rest of this subsection, every identity will be at the classical level, i.e. modulo $\hbar$.
\end{rmk}

However, the connection $\nabla$ can be modified to a flat connection, using the bracket $[-,-]_{\mathcal{W}}$:
\begin{prop}
There is a connection on the holomorphic Weyl bundle of the following form:
$$
D=\nabla-\delta+\frac{1}{\hbar}[I,-]_{\mathcal{W}},
$$
which is flat modulo $\hbar$. Here $I$ is a $1$-form valued section of the Weyl bundle of weight $\geq 3$.
\end{prop}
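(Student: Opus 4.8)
The plan is to construct the correction term $I$ iteratively by weight (i.e.\ by powers of the symplectic coordinates $\delta_z^i$), mimicking Fedosov's construction of the flat connection on the Weyl bundle of a real symplectic manifold. Write $D = \nabla - \delta + \tfrac{1}{\hbar}[I,-]_{\mathcal W}$ and expand its curvature. Using \eqref{eqn:delta-bracket} to identify $\delta = \tfrac1\hbar[\omega_{ij}dz^i\otimes\delta_z^j,-]_{\mathcal W}$ and the earlier lemma identifying $\nabla^2 = \tfrac1\hbar[R,-]_{\mathcal W}$ mod $\hbar$, one computes (modulo $\hbar$)
\[
D^2 = \frac{1}{\hbar}\Big[-\delta I + \nabla I + R + \frac{1}{\hbar}\cdot\frac12[I,I]_{\mathcal W},\ -\Big]_{\mathcal W},
\]
so it suffices to find $I$ of weight $\geq 3$ solving the Fedosov equation
\[
\delta I = \nabla I + R + \frac{1}{\hbar}\cdot\frac{1}{2}[I,I]_{\mathcal W}
\]
(here the last bracket term, after dividing by $\hbar$, lands in weight $\geq 3$ whenever $I$ does, since each Weyl contraction raises total weight). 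Indeed once this holds, $D^2 = \tfrac1\hbar[\text{central element},-]_{\mathcal W} = 0$ modulo $\hbar$.

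Next I would solve this equation by applying $\delta^{-1}$ and setting
\[
I = \delta^{-1}\Big(\nabla I + R + \frac{1}{\hbar}\cdot\frac12[I,I]_{\mathcal W}\Big).
\]
Since the right-hand side has no $(0,*,0)$ component and $\delta^{-1}$ raises weight by one, this is a contraction on the space of weight-$\geq 3$ sections: the $n$-th iterate determines $I$ modulo weight $n+1$, so the sequence converges in the adic topology on $\mathcal W$ and produces a unique $I$ with $\pi_0 I = 0$, $\delta^{-1} I = 0$, of weight $\geq 3$. One then checks this $I$ actually solves the Fedosov equation: applying $\delta$ and using the homotopy identity \eqref{eqn: identity-delta-inverse} shows $\delta I$ agrees with the right-hand side up to a $\delta$-closed, $\delta^{-1}$-closed error term $A := \delta I - \nabla I - R - \tfrac{1}{2\hbar}[I,I]_{\mathcal W}$; a standard argument shows $A$ satisfies $\delta^{-1}A = 0$ and, using the Bianchi identity \eqref{eqn:delta-inverse-annihilates-R}, $\delta(R)=0$, together with $\nabla^2$, $[\nabla,\delta]$, and Jacobi for $[-,-]_{\mathcal W}$, that $\delta A = \nabla A + \tfrac1\hbar[I,A]_{\mathcal W}$; then \eqref{eqn: identity-delta-inverse} forces $A = \delta^{-1}(\nabla A + \tfrac1\hbar[I,A]_{\mathcal W})$, which by the same contraction argument has only the zero solution.

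The main obstacle is the bookkeeping around the Weyl bracket: one must verify carefully that dividing $[I,I]_{\mathcal W}$ by $\hbar$ is legitimate and weight-increasing (each term of $[-,-]_{\mathcal W}$ carries at least one factor $\hbar\,\omega^{ij}\partial_{\delta}\otimes\partial_{\delta}$, so $\tfrac1\hbar[\text{wt }\geq3,\text{wt }\geq 3]_{\mathcal W}$ has weight $\geq 3+3-2 = 4$), and that the error term $A$ indeed obeys the claimed "Bianchi-type" identity $\delta A = \nabla A + \tfrac1\hbar[I,A]_{\mathcal W}$ — this is where the torsion-freeness and $\omega$-compatibility of $\nabla$, the identity $\delta(R)=0$, and the graded Jacobi identity all get used simultaneously. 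The convergence and uniqueness statements are then routine consequences of $\delta^{-1}$ strictly raising weight. Everything here is the holomorphic-symplectic analogue of Fedosov's theorem, so the only real care needed is to ensure the complex/bidegree grading and the formal parameter $\hbar$ interact correctly with the weight filtration.
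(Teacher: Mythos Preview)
Your proposal is correct and follows essentially the same route as the paper: compute $D^2$ as a Weyl bracket, reduce to the Fedosov equation $\delta I = R + \nabla I + \tfrac{1}{2\hbar}[I,I]_{\mathcal W}$, solve it iteratively via $I = \delta^{-1}(\cdots)$, and then kill the error term $A$ by showing $\delta^{-1}A=0$, $\pi_0(A)=0$, and $\delta A = \nabla A + \tfrac1\hbar[I,A]_{\mathcal W}$ (the latter using the Bianchi identity together with the anticommutation $\delta\nabla+\nabla\delta=0$, which is where $\omega$-compatibility enters). The paper writes $\tfrac1\hbar I^2$ in place of your $\tfrac{1}{2\hbar}[I,I]_{\mathcal W}$, but since $I$ is odd these agree.
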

\begin{proof}
First of all, we have the following straightforward calculation:
\begin{equation}\label{eqn:D-square}
\begin{aligned}
 D^2(\alpha)&=D(\nabla\alpha-\delta\alpha+\frac{1}{\hbar}[I,\alpha]_{\mathcal{W}}) \\
 &=\nabla\fbracket{\nabla\alpha-\delta\alpha+\frac{1}{\hbar}[I,\alpha]_{\mathcal{W}}}-\delta\fbracket{\nabla\alpha-\delta\alpha+\frac{1}{\hbar}[I,\alpha]_{\mathcal{W}}} \\
 &\qquad + \frac{1}{\hbar}\left[I, \nabla\alpha-\delta\alpha+\frac{1}{\hbar}[I,\alpha]_{\mathcal{W}}\right] \\
 &=\nabla^2\alpha+\frac{1}{\hbar}\left(\nabla[I,\alpha]_{\mathcal{W}}+[I,\nabla\alpha]_{\mathcal{W}}\right)-\delta\left(\frac{1}{\hbar}[I,\alpha]_{\mathcal{W}}\right)-\frac{1}{\hbar}[I,\delta\alpha]_{\mathcal{W}}
 + \frac{1}{\hbar^2}[I,[I,\alpha]]_{\mathcal{W}} \\
 &=\frac{1}{\hbar}\left[R-\delta I+\nabla I+\frac{1}{\hbar}I^2,\alpha\right]_{\mathcal{W}}.
\end{aligned}
 \end{equation}

It follows that a sufficient condition of the flatness of $D$ is the following equation:
\begin{equation}\label{eqn:equivalent-flatness-equation-D}
\delta I=R+\nabla I+\frac{1}{\hbar}I^2.
\end{equation}\label{eqn:D-flat}
Since the operator $\delta^{-1}$ increases the weight, we can easily find a solution of the above equation with leading term $\delta^{-1}(R)$ which is cubic, via an induction procedure on the weight. We will show that such a section $I$  satisfies equation \eqref{eqn:equivalent-flatness-equation-D}:
\begin{equation}\label{eqn:definition-I}
I=\delta^{-1}(R+\nabla I)+\frac{1}{\hbar}\delta^{-1}I^2.
\end{equation}

It is obvious from the construction that $I$ satisfies
\begin{equation}\label{eqn:delta-inverse-annihilates-I}
 \delta^{-1}I=0,
\end{equation}
and that $\pi_0(I)=0$. Let $A:=(\delta I-R-\nabla I-\frac{1}{\hbar}I^2)_{\hbar=0}$. A simple observation about $A$ is that $\pi_0(A)=0$.   And there is
\begin{equation}\label{eqn:delta-inverse-annihilates-A}
 \begin{aligned}
 \delta^{-1}A&=\delta^{-1}(\delta I-R-\nabla I-\frac{1}{\hbar}I^2)\\
 &=I-\delta^{-1}\left(R+\nabla I+\frac{1}{\hbar}I^2\right) = 0.
 \end{aligned}
\end{equation}
Next we will show that $A$ satisfies the following equation:
\begin{equation}\label{eqn:A}
\delta A=\nabla A+\frac{1}{\hbar}[I,A]_{\mathcal{W}}.
\end{equation}
Applying $\delta^{-1}$ to equation (\ref{eqn:A}), and using equation (\ref{eqn:delta-inverse-annihilates-A}), we get:
$$
A=\delta^{-1}\delta (A)=\delta^{-1}(\nabla A+\frac{1}{\hbar}[I,A]_{\mathcal{W}}).
$$
It then follows that $A=0$. Let us start the proof of  equation \eqref{eqn:A}. There is first the
$$
\delta A=\delta^2 I-\delta R-\delta\nabla I-\frac{1}{\hbar}\delta I^2=\nabla\delta I-\frac{1}{\hbar}\delta I^2.
$$
On the other hand,
\begin{equation}
 \begin{aligned}
  &\nabla A+\frac{1}{\hbar}[I,A]_{\mathcal{W}}\\
  =&\nabla(\delta I-R-\nabla I-\frac{1}{\hbar}I^2)+\frac{1}{\hbar}\left[I,\delta I-R-\nabla I-\frac{1}{\hbar}I^2\right]_{\mathcal{W}}\\
  =&\nabla\delta I-\nabla R-\nabla^2 I-\frac{1}{\hbar}\nabla I^2+\frac{1}{\hbar}\left[I,\delta I-R-\nabla I-\frac{1}{\hbar}I^2\right]_{\mathcal{W}}\\
  =&\nabla\delta I-\nabla^2 I-\frac{1}{\hbar}(\nabla I\circ I-I\circ\nabla I)+\frac{1}{\hbar}[I,\delta I]-\frac{1}{\hbar}[I,R]-\frac{1}{\hbar}[I,\nabla I]-\frac{1}{\hbar^2}[I, I^2]\\
  =&\nabla\delta I-\frac{1}{\hbar}[R, I]+\frac{1}{\hbar}[I,\delta I]-\frac{1}{\hbar}[I,R]\\
  =&\nabla\delta I+\frac{1}{\hbar}[I,\delta I].
 \end{aligned}
\end{equation}
Here we have used the Bianchi identity in the third equality. Now we need the following lemma which follows from the compatibility between $\nabla$ and the holomorphic symplectic structure $\omega$:
\begin{lem}\label{lem:delta-nabla-anicommutes}
The operators $\delta$ and $\nabla$ anti-commute with each other:
\begin{equation}\label{eqn:delta-nable-anticommutes}
\delta\circ \nabla+\nabla\circ\delta=0.
\end{equation}

\end{lem}
\end{proof}


\begin{rmk}
The above differential is only at the classical level, and in general this flat connection at the classical level can not be enhanced to a flat connection at the quantum level. The reason  that the above argument does not work is that the identity $\pi_0(\delta I-R-\nabla I-\frac{1}{\hbar}I^2)=0$ is only valid modulo $\hbar$.
\end{rmk}
There is an observation about $I$ which will be useful later: since $I$ is a $1$-form valued section of $\mathcal{W}$, we can decompose $I$ into its $(1,0)$ and $(0,1)$ components respectively. In particular, from equation \eqref{eqn:definition-I}, it is not difficult to see that the $(0,1)$ component is given by consecutively applying $\delta^{-1}\circ \nabla$ to the term $R_{\bar{i}(jkl)}d\bar{z}^i\otimes(\delta_z^j\delta_z^k\delta_z^l)$, or in other words, the  Taylor expansion of the Atiyah class.

We have the following:
\begin{prop}\label{prop:weyl-bundle-quasi-isomorphic-Dolbeault}
The cochain complex of sheaf $(\mathcal{W}, D)$ is quasi-isomorphic to the Dolbeault complex of $\mathcal{O}_X$.
\end{prop}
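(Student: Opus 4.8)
The plan is to exhibit the ``symbol'' (Weyl-symbol) map $\pi_0$ as the required quasi-isomorphism and to verify this by a single Fedosov-type spectral sequence. Since the flat connection $D$ constructed above satisfies $D^2=0$ only modulo $\hbar$, I would, as in the rest of this subsection, run the whole argument at the classical level, i.e. with $\mathcal{W}$ and $D$ taken modulo $\hbar$; then $(\mathcal{W},D)$ is an honest complex of $\A_X^{0,0}$-modules, hence of fine (soft) sheaves, and it suffices to produce a morphism of complexes of sheaves which is an isomorphism on cohomology sheaves.

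First I would show that the projection $\pi_0\colon(\mathcal{W},D)\to(\A_X^{0,*},\bar\partial)$ onto the $(0,*,0)$ component is a cochain map. Split $D=D^{1,0}+D^{0,1}$ according to $(1,0)$- and $(0,1)$-form degree. The three operators $-\delta$, $\nabla^{1,0}$ and $\frac{1}{\hbar}[I^{1,0},-]_{\mathcal{W}}$ each strictly raise the $(1,0)$-form degree, and $\frac{1}{\hbar}[I^{0,1},-]_{\mathcal{W}}$ strictly raises the Weyl weight (using that $I$ has weight $\geq 3$), so all four are killed by $\pi_0$. Because $\nabla$ is compatible with the complex structure, $\nabla^{0,1}$ acts as $\bar\partial$ on the coefficient forms and annihilates the holomorphic generators $\delta_z^i$, so $\pi_0\circ\nabla^{0,1}=\bar\partial\circ\pi_0$; altogether $\pi_0\circ D=\bar\partial\circ\pi_0$.

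Next I would filter $\mathcal{W}$ by the decreasing filtration $F^k\mathcal{W}$ consisting of those sections all of whose $(p,*,r)$-components have $p+r\geq k$ (a complete, exhaustive filtration, since $\mathcal{W}$ is complete with respect to the Weyl weight). By the bidegree bookkeeping of the previous step, $-\delta$ and $\nabla^{0,1}$ preserve $p+r$ while $\nabla^{1,0}$ and the two bracket terms strictly raise it, so $D$ is a filtered differential with associated graded $D_0=\bar\partial-\delta$, and the associated spectral sequence converges to $H^*(\mathcal{W},D)$ with $E_1=H^*(\mathrm{gr}_F\mathcal{W},\bar\partial-\delta)$. On $\mathrm{gr}^k_F\mathcal{W}$ I would compute the $\delta$-cohomology first: for $k=0$ one has $\delta=0$ and $\mathrm{gr}^0_F\mathcal{W}=(\A_X^{0,*},\bar\partial)$, while for $k\geq 1$ the projection $\pi_0$ vanishes identically on $\mathrm{gr}^k_F\mathcal{W}$, so \eqref{eqn: identity-delta-inverse} shows $\delta^{-1}$ is a contracting homotopy for $\delta$ there (the usual Fedosov/Koszul acyclicity), whence $H^*(\mathrm{gr}^k_F\mathcal{W},\bar\partial-\delta)=0$. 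Thus $E_1$ is concentrated on the line $k=0$ and equals $H^*(\A_X^{0,*},\bar\partial)$; the spectral sequence degenerates at $E_1$, and its edge map, which is precisely $\pi_0$, is an isomorphism on cohomology sheaves. This proves that $\pi_0\colon(\mathcal{W},D)\to(\A_X^{0,*},\bar\partial)$ is a quasi-isomorphism, which is the assertion. A concrete quasi-inverse is the Fedosov--Taylor lift $\tau$, the unique $D^{1,0}$-flat section over a given $\alpha$, built by weight induction from $\tau(\alpha)=\alpha+\delta^{-1}\bigl(\nabla^{1,0}\tau(\alpha)+\frac{1}{\hbar}[I^{1,0},\tau(\alpha)]_{\mathcal{W}}\bigr)$, exactly as $I$ was produced in \eqref{eqn:definition-I}.

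The bidegree counts and the homological-perturbation input are routine; the one point that needs genuine care is staying at the classical level throughout, since $D^2=0$ fails beyond order $\hbar^0$. I expect the heart of the argument --- and the only place where something could go wrong --- to be the computation of the $E_1$-page: the Fedosov-type acyclicity of $(\mathrm{gr}^k_F\mathcal{W},\delta)$ for $k\geq 1$ via \eqref{eqn: identity-delta-inverse}, together with the (harmless but needs-to-be-checked) convergence of the spectral sequence for this non-bounded yet complete filtration. Everything else is bookkeeping with form-degrees and Weyl weights.
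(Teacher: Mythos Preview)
Your argument is correct. Both you and the paper use $\pi_0$ as the quasi-isomorphism, but the verification differs: the paper proceeds by a direct Fedosov-style iteration, first observing that any $D$-flat section with $\pi_0(\alpha)=0$ must vanish (by looking at the leading term under $\delta$), and then explicitly lifting a $\bar\partial$-closed $\alpha_0$ to a $D$-flat section via the fixed-point formula $\alpha=\alpha_0+\delta^{-1}(\nabla\alpha+\hbar^{-1}[I,\alpha]_{\mathcal W})$ and checking $D\alpha=0$ by the same trick used for $I$ itself. Your route repackages this as a filtration spectral sequence on $p+r$, reducing the question to Koszul acyclicity of $\delta$ on the associated graded; this is cleaner homologically and makes the degeneration structure transparent, at the cost of suppressing the explicit lift. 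Note that the paper's iterative lift uses the \emph{full} $\nabla$ and $I$, not just their $(1,0)$-parts as in your final remark, and produces genuinely $D$-flat sections from $\bar\partial$-closed inputs; your $D^{1,0}$-flat lift $\tau$ is not obviously a cochain map without further argument, so that side comment should either be dropped or adjusted to match the paper's formula.
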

\begin{proof}
Let us consider the projection onto the $(0,*,0)$ component $\pi_0:\mathcal{W}\rightarrow \A_X^{0,*}(\mathcal{O}_X)$. It is not difficult to see that this is a cochain map. To see that this is a quasi-isomorphism, we only need to show that local flat sections of $\mathcal{W}$ can be identified with $\bar{\partial}$-closed differential forms. This follows from an iteration procedure. A simple observation is that suppose that a flat section $\alpha$ under the differential $D$ satisfies $\pi_0(\alpha)=0$, then $\alpha$ has to vanish since $\delta$ applies to the leading term of $\alpha$ nontrivially. On the other hand, starting from a $\bar{\partial}$-closed form $\alpha_0\in\A_X^{0,*}(\OO_X)$, we will construct a flat section $\alpha$ with constant term $\alpha_0:=\pi_0(\alpha)$:
$$
\alpha:=\alpha_0+\delta^{-1}(\nabla\alpha+\frac{1}{\hbar}[I,\alpha]_{\mathcal{W}}).
$$
To show that $D\alpha=0$ is similar to \eqref{eqn:D-flat}, and we give the details here. Let $A:=D\alpha=\nabla\alpha-\delta\alpha+\frac{1}{\hbar}[I,\alpha]_{\mathcal{W}}$. First we have
\begin{align*}
\delta^{-1}A & = \delta^{-1}(\nabla\alpha+\frac{1}{\hbar}[I,\alpha]_{\mathcal{W}})-\delta^{-1}\delta(\alpha)\\
             & = \delta^{-1}(\nabla\alpha+\frac{1}{\hbar}[I,\alpha]_{\mathcal{W}})-(\alpha-\alpha_0)=0.
\end{align*}
And there is
$$
A=\pi_0(A)+\delta^{-1}(\nabla A+\frac{1}{\hbar}[I,A]),
$$
which, together with the fact that $\pi_0(A)=\bar{\partial}(\alpha_0)=0$ implies that $A=0$.
\end{proof}
\begin{rmk}
This proposition is the complex analogue of the fact that flat sections of the real Weyl bundle under Fedosov's abelian connection has a one-to-one correspondence with smooth functions.
\end{rmk}

\subsection{Classical action functional}

Let $I$ be a solution of equation \eqref{eqn:equivalent-flatness-equation-D}. Then $$D:=\nabla-\delta+\frac{1}{\hbar}[I,-]_{\mathcal{W}}$$ defines a square zero operator and thus a (curved) $L_\infty$ structure on $\g_X$ (where $\g_X[1]:=\A_X^*\otimes_{\mathcal{O}_X} T_X$), which is equivalent to the one defined in \cite{Kapranov}. As we have seen in the construction of $I$, the leading cubic term is induced from the curvature tensor $R$ of the connection $\nabla$.

Let $M$ be a closed $3$-dimensional manifold, and let $\A(M)$ denote the space of differential forms on $M$. The space of fields of our Rozansky-Witten model is given by
$$
\E:=\A(M)\otimes_{\mathbb{C}} \g_X.
$$
Together with the Poincar\'{e} pairing on $\A(M)$, we obtain the following pairing on the $\E$:
\begin{align*}
\langle-,-\rangle:\E\otimes_{\A_X}\E&\rightarrow\A_X,\\
                  \langle\alpha\otimes g_1,\beta\otimes g_2\rangle&:=\omega(g_1,g_2)\cdot\int_M\alpha\wedge\beta.
\end{align*}
\begin{defn}
We define the space of functionals on $\E$ which are valued in  $\A_X[[\hbar]]$ by
\[
\OO(\E):=\widehat{\Sym}(\E^\vee)[[\hbar]]:=\prod_{k\geq 0} \OO^{(k)}(\E):=\prod_{k\geq 0} \Sym^k_{\A_X}(\E^\vee)[[\hbar]],
\]
where $\Sym^k_{\A_X}(\E^\vee)$ is the (graded)-symmetric $\A_X$-linear completed tensor product. Further,  we will denote by $\Ol(\E)\subset \OO(\E)$ the subspace of {\em local functionals}, i.e. those of the form given by the integration of a Lagrangian density on $M$
$$
     \int_M \mathcal L(\mu), \quad \mu \in \E.
$$
\end{defn}

There exists a natural map
\[
   \rho: \mathcal{W} \to \Ol(\E)
\]
defined as follows. Given  a section $I$ of
$
\Sym^k(T^\vee X),
$
we can associate an ($\A_X$-valued) functional on $\g_X[1]$ using the natural pairing between $T_X$ and $T^\vee_X$. This functional can be  further extended to $\E$ via integration of differential forms over $M$. Explicitly,
\begin{align}\label{rho-map}
\rho(I):\mathcal{E}\rightarrow \A_X,\quad
\alpha \mapsto\frac{1}{k!}\int_{M}I(\alpha,\cdots,\alpha).
\end{align}
The $\A_X$-linear extension of such assignment defines $\rho$. Since $\rho(I)$ requires that the total degree of differential forms on $M$ in the input to be $3$ (for the integration on $M$) , our convention is that
\[
  \rho(I)=0 \quad \text{if}\ I\in \A_X[[\hbar]] .
\]
\begin{defn}\label{defn:classical-action-functional}
Let $I\in \mathcal{W}$ be a solution of equation \eqref{eqn:equivalent-flatness-equation-D}, the classical action functional will be of the form
\begin{equation}\label{eqn:action-functional}
S(\alpha):=\int_{M}\omega(d_M\alpha+\nabla\alpha,\alpha)  +  \rho(I-\delta^{-1}(\omega))(\alpha), \quad \alpha\in \E.
\end{equation}
Here $d_M$ denotes the de Rham differential on $M$ and we have used the holomorphic symplectic form $\omega$ to pair factors in $T_X$. The first term constitutes the free part of the theory, which defines a derivative
\[
   Q= d_{M}+\nabla.
\]
\end{defn}

In terms of the $L_\infty$ structure on $\g_X$, the classical action functional can be written in the following explicit form:  Let $\alpha\in\E=\A(M)\otimes\g_X[1]$, then
\begin{equation}\label{eqn:classical-action-functional}
S(\alpha):=\langle d_M\alpha,\alpha\rangle+\sum_{k\geq 0}\frac{1}{(k+1)!}\langle l_k(\alpha^{\otimes k}),\alpha\rangle.
\end{equation}
We will split the action functional $S$ as the sum of its free and interaction parts:
\begin{align*}
I_{cl}(\alpha) & := \sum_{k\not=1}\frac{1}{(k+1)!}\langle l_k(\alpha^{\otimes k}),\alpha\rangle, \\
S_{free}(\alpha) & := \langle (d_M+\frac{l_1}{2})\alpha,\alpha\rangle=\langle Q\alpha,\alpha\rangle
\end{align*}
In particular, we will let $\tilde{l}_k$ denote the following component of the classical action:
\begin{equation}\label{eqn:l-k-tilde}
\tilde{l}_k(\alpha):=\frac{1}{(k+1)!}\langle l_k(\alpha^{\otimes k}),\alpha\rangle.
\end{equation}

\begin{rmk}
The space of fields $\E$ is $\Z_2$-graded, and the grading is induced by the standard $\mathbb{Z}$-grading on $\A(M)$ and $\A_X$.
\end{rmk}

\begin{lem-defn}\label{Poisson-bracket}
The symplectic pairing $\abracket{-,-}$ induces an odd Poisson bracket of degree $1$ on the space of local functionals, denoted by
$$
  \fbracket{-,-}: \Ol(\E)\otimes_{\A_{X}^{\sharp}} \Ol(\E) \to \Ol(\E),
$$
which is bilinear in $\A_X^{\sharp}$.
\end{lem-defn}

The flatness of the operator $D$ (or equivalently, equation \eqref{eqn:equivalent-flatness-equation-D}) implies the classical master equation of the action functional:
\begin{prop}
The classical action functional $S$ of the Rozansky-Witten model satisfies the following {\em classical master equation (CME)}:
\begin{equation}\label{eqn:classical-master-equation}
\nabla I_{cl}+\frac{1}{2}\fbracket{I_{cl},I_{cl}}+\rho(R)=0.
\end{equation}
\end{prop}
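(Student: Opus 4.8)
The plan is to translate the flatness equation \eqref{eqn:equivalent-flatness-equation-D} for $I$ into the CME via the map $\rho$, exploiting the fact that $\rho$ intertwines the algebraic operations on the Weyl bundle $\mathcal{W}$ with the corresponding operations on local functionals. Concretely, one expects $\rho$ to satisfy: (i) $\rho$ is a chain map for $\nabla$, i.e. $\rho(\nabla J) = \nabla\,\rho(J)$ (where on the right $\nabla$ acts on functionals through the induced connection); (ii) $\rho$ converts the Weyl bracket into the Poisson bracket, $\rho\left(\tfrac{1}{\hbar}[J_1,J_2]_{\mathcal{W}}\right) = \pm\fbracket{\rho(J_1),\rho(J_2)}$, up to the usual combinatorial constants coming from the definition \eqref{rho-map}; and (iii) $\rho(\delta J) = 0$ for the relevant $J$, or more precisely $\delta$ maps into the kernel of $\rho$ after integration by parts on $M$. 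The key computational input is that $\rho(I - \delta^{-1}(\omega))$ is, by Definition \ref{defn:classical-action-functional}, exactly the interaction part $I_{cl}$ plus (possibly) the $l_1$-term already absorbed into $Q$, so that $I_{cl} = \rho(I)$ modulo the quadratic piece.

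First I would pin down the precise dictionary between the three terms of \eqref{eqn:equivalent-flatness-equation-D} and the three terms of \eqref{eqn:classical-master-equation}. Applying $\rho$ to $\delta I = R + \nabla I + \tfrac{1}{\hbar}I^2$: the left side $\rho(\delta I)$ should vanish — this is where I expect to use \eqref{eqn:delta-bracket}, writing $\delta = \tfrac{1}{\hbar}[\omega_{ij}dz^i\otimes\delta_z^j, -]_{\mathcal{W}}$, so that $\rho(\delta I)$ becomes a Poisson bracket with $\rho(\delta^{-1}(\omega))$-type term; the appearance of $\delta^{-1}(\omega)$ in the action functional \eqref{eqn:action-functional} is precisely engineered so that this bracket contribution is accounted for, and the net effect is that $\rho(\delta I)$ either cancels or reorganizes the free-part contributions. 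On the right, $\rho(R)$ is the last term of the CME verbatim, $\rho(\nabla I) = \nabla I_{cl}$ by the chain map property, and $\rho(\tfrac{1}{\hbar}I^2) = \tfrac12\fbracket{I_{cl},I_{cl}}$ since $I^2 = \tfrac12[I,I]$ (the product being graded-symmetric in the relevant degree) and by property (ii). Assembling these gives \eqref{eqn:classical-master-equation}.

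The main obstacle, I expect, is bookkeeping the $\delta$-term and the combinatorial factors. The definition \eqref{rho-map} has the $\tfrac{1}{k!}$ normalization, and the Weyl product/bracket carry their own $\tfrac{1}{k!}$ factors contracted against powers of $\omega^{ij}$; verifying that these conspire to produce exactly the factor $\tfrac12$ in front of $\fbracket{I_{cl},I_{cl}}$, rather than some other constant, requires care. Equally delicate is showing $\rho(\delta J)$ behaves correctly: one must integrate by parts on $M$ to move the de Rham differential $d_M$ hidden inside the free action, and check that the boundary terms vanish (as $M$ is closed) and that the $\delta$-contribution matches the shift by $\delta^{-1}(\omega)$. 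I would handle this by a direct local computation in holomorphic coordinates, expanding both \eqref{eqn:equivalent-flatness-equation-D} and \eqref{eqn:classical-master-equation} weight by weight and matching; the $L_\infty$ reformulation \eqref{eqn:classical-action-functional}–\eqref{eqn:l-k-tilde} provides a useful cross-check, since the CME is the standard statement that $\{S,S\}=0$ for an $L_\infty$ action, which is equivalent to $D^2=0$ by the general correspondence between curved $L_\infty$ structures and solutions of the classical master equation.
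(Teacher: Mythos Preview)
The paper gives no proof of this proposition at all; it simply states that ``the flatness of the operator $D$ (or equivalently, equation \eqref{eqn:equivalent-flatness-equation-D}) implies the classical master equation of the action functional'' and then records the statement. Your proposal --- to push the flatness identity $\delta I = R + \nabla I + \tfrac{1}{\hbar}I^2$ through the map $\rho$ and match it term by term with \eqref{eqn:classical-master-equation} --- is exactly the argument the paper is gesturing at, and the three correspondences you identify ($\rho(\nabla I)\leadsto \nabla I_{cl}$, $\rho(\tfrac{1}{\hbar}I^2)\leadsto \tfrac12\{I_{cl},I_{cl}\}$, $\rho(R)\leadsto\rho(R)$) are the right ones. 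Your diagnosis of where the work lies --- the $\delta$-term and the role of $\delta^{-1}(\omega)$ in the action, plus the $\tfrac{1}{k!}$ bookkeeping --- is also accurate; these are precisely the details the paper suppresses.
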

\begin{rmk}
Equation \eqref{eqn:classical-master-equation} is a little different from the usual classical master equation, due to the term $\rho(R)$. This term also reflects the curving
in the underlying $L_\infty$ structure.
\end{rmk}

\subsection{Comparison with the original RW model}\label{subsection:comparison-physical-model}

In this subsection, we compare the Rozansky-Witten model we defined with the original Rozansky-Witten theory defined in \cite{RW}.

We first recall the original definition of Rozansky-Witten model in \cite{RW}. Given a holomorphic symplectic manifold $(X,\omega)$ as the target space, the field theory has bosonic fields the space of all smooth maps $\phi: M\rightarrow X$, and fermionic fields $\eta\in\Gamma(M,\phi^*\bar{T}_X)$ and $\chi\in\Gamma(M,\A_M^1\otimes\phi^*T_X)$. In terms of local coordinates on the target and domain, these fields can be described as $\phi^I(x^\mu),\bar{\phi}^{\bar{I}}(x^\mu),\chi^I(x^\mu)$ and $\eta^{\bar{I}}(x^\mu)$. There exists an odd vector field $\bar{Q}$ on the whole space of fields, satisfying the condition
$$
\fbracket{\bar{Q},\bar{Q}}=0.
$$
This vector field is known as the BRST operator in the physics literature.

The Lagrangian of the Rozansky-Witten model is of the form $S=\bar{Q}(V)+S_0$, where $S_0$ is a $\bar{Q}$-closed functional explicitly given by:
\begin{equation}\label{eqn:original-RW-model}
S_0(\chi, \eta)=\int_M\frac{d^3x}{2}\epsilon^{\mu\nu\rho}\left(\omega_{IJ}\chi_\mu^I\nabla_\nu\chi_\rho^J-\frac{1}{3}\omega_{IJ}R^J_{KL\bar{M}}\chi_\mu^I\chi_\nu^K\chi_\rho^L\eta^{\bar{M}}+\frac{1}{3}(\nabla_L\omega_{IK})(\partial_\mu\phi^I)\chi_\nu^K\chi_\rho^L\right).
\end{equation}
Here  $\nabla$ denotes a symmetric connection on $T_X$ (and also its pull back on $\phi^*(T_X)$). Let us make the comparison of the Lagrangian in equation \eqref{eqn:original-RW-model} with the one we defined in terms of Weyl bundle more explicitly. It is easily seen that the first term in equation \eqref{eqn:original-RW-model} given by the pullback  connection on $\phi^*(T_X)$ exactly corresponds to the free term in our Lagrangian. For the second term given by the curvature tensor, we can see that the interaction terms in our Lagrangian is nothing but the Taylor expansion of the curvature tensor. In particular, if we choose a connection $\nabla$ compatible with the symplectic form $\omega$, then the last term in the above expression vanishes. In this case, we list the terms of Lagrangian density in these two models in the following table:

\begin{center}
  \begin{tabular}{ | l | p{4.5cm} | p{6cm} | }
    \hline
     & free term & interaction term \\ \hline
    Our RW model & $\omega(d_M\alpha+\nabla\alpha,\alpha)$ & $\rho(I-\delta^{-1}(\omega))(\alpha)$ \\ \hline
    Original RW model & $\frac{1}{2\sqrt{h}}\epsilon^{\mu\nu\rho}\omega_{IJ}\chi_\mu^I\nabla_\nu\chi_\rho^J$ & $-\frac{1}{2\sqrt{h}}\epsilon^{\mu\nu\rho}\frac{1}{3}\omega_{IJ}R^J_{KL\bar{M}}\chi_\mu^I\chi_\nu^K\chi_\rho^L\eta^{\bar{M}}$ \\
    \hline
  \end{tabular}
\end{center}

In the case where the curvature does not have a $(2,0)$-part, it is not difficult to observe that the action functional of our RW model is the formal version, i.e. the Taylor expansion of the functional $S_0$ around the space of constant maps from $M$ to $X$: locally around a point $x\in X$, the exponential map associated to the connection $\nabla$ trivializes the bundle $T_X$ with fiber $T_{X,x}$. The bosonic fields consisting of all smooth maps can be replaced by a function on $M$ valued in a tangent space of $X$. The fields $\chi$ (or $\rho$) can be locally described as $1$-forms (or functions) on $M$ valued in the vector space $T_{X,x}$ (or $\bar{T}_{X,x}$). Thus the field content in the formal version consists of both $0$ and $1$-forms valued in $T_X$.

Similar to the Chern-Simons theory, after the BV extension, our model has field content $\A_M\otimes T_X$ which contains differential forms on $M$ of all degrees.  The leading interaction term in our classical action functional is exactly given by the curvature of $\nabla$, and the higher order terms are nothing but the Taylor coefficients of the curvature around the fixed point $x\in X$.


\subsection{Dimension reduction and B-model}

Let $M$ be a $3$-dimensional manifold  of the form
$$
M=\Sigma_g\times S^1,
$$
where $\Sigma_g$ is a genus $g$ Riemann surface. In physics, the reduction of RW model on a circle $S^1$ is obtained by letting the size of the circle go to $0$, and only the zero modes in the corresponding Fourier modes are left. Mathematically speaking, this is nothing but looking at those fields which are harmonic when restricted to $S^1$. More explicitly, the space of fields of the reduced theory is
\begin{align*}
\mathcal{E}_{red}&=(\mathcal{A}^*(\Sigma_g)\otimes \mathcal{H}^0(S^1))\otimes\g_X[1]+(\mathcal{A}^*(\Sigma_g)\otimes \mathcal{H}^1(S^1))\otimes\g_X[1]\\
&\cong \mathcal{A}^*(\Sigma_g)\otimes\g_X[1]+\mathcal{A}^*(\Sigma_g)\otimes\g_X^\vee,
\end{align*}
where in the second line we have applied the isomorphism
$$
\g_X\cong \g_X^\vee
$$
using the holomorphic symplectic structure. It is then obvious that $\E_{red}$ is exactly the space of fields of the B-model with source $\Sigma_g$ and target $X$ defined in \cite{Li-Li}. Moreover, it is straightforward to check that the classical action functional of the RW model reduces to that of the B-model there. Thus, we have shown the following proposition:
\begin{prop}
On a $3$-dimensional manifold of the form $\Sigma_g\times S^1$, the reduction of the classical Rozansky-Witten model on the circle $S^1$ is the topological B-model on $\Sigma_g$ with the same target $X$ as defined in \cite{Li-Li}.
\end{prop}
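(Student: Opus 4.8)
The plan is to prove the proposition in two stages: first match the spaces of fields together with their pairings, and then check that the full interacting classical action functional \eqref{eqn:action-functional} reduces to that of the B-model of \cite{Li-Li}.

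For the fields: by the K\"unneth theorem and Hodge theory along $S^1$, the $S^1$-harmonic part of $\A^*(\Sigma_g\times S^1)$ is $\A^*(\Sigma_g)\oplus\A^*(\Sigma_g)\cdot d\theta$, where $\theta$ is the coordinate on $S^1$ and $d\theta$ generates $\mathcal H^1(S^1)$. Tensoring with $\g_X[1]$ gives two copies of $\A^*(\Sigma_g)\otimes\g_X[1]$, the second shifted in cohomological degree by $1$ (the degree of $d\theta$); using the holomorphic symplectic form $\omega$ to identify $\g_X\cong\g_X^\vee$ then reproduces exactly the B-model field content $\A^*(\Sigma_g)\otimes\g_X[1]\oplus\A^*(\Sigma_g)\otimes\g_X^\vee$, as in the displayed isomorphism above. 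I would also check here that the symplectic pairing $\langle-,-\rangle$ on $\E$, restricted to $\E_{red}$ and composed with this identification, becomes the evaluation pairing between $\g_X[1]$ and $\g_X^\vee$ underlying the B-model, the factor $\int_{S^1}d\theta$ being absorbed into the overall normalization.

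For the action: write a reduced field as $\alpha=\alpha_0+\alpha_1\,d\theta$ with $\alpha_0,\alpha_1\in\A^*(\Sigma_g)\otimes\g_X[1]$. Splitting $d_M=d_{\Sigma_g}+d_\theta$, using that $d_\theta$ annihilates $S^1$-harmonic fields and that $\int_{\Sigma_g\times S^1}$ extracts the $d\theta$-component of the integrand, the free term $\int_M\omega(d_M\alpha+\nabla\alpha,\alpha)$ collapses to a constant multiple of $\int_{\Sigma_g}\omega((d_{\Sigma_g}+\nabla)\alpha_1,\alpha_0)$ — the two contributions, one with $d\theta$ from each slot, combining by integration by parts and the (anti)symmetry of $\omega$ — which under the $\omega$-identification is the free part of the B-model action with its differential $d_{\Sigma_g}+\nabla$. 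For the interaction term $\rho(I-\delta^{-1}(\omega))(\alpha)=\frac1{k!}\int_M(I-\delta^{-1}(\omega))(\alpha,\cdots,\alpha)$, the form-degree constraint built into $\rho$ (total $M$-degree of the inputs equal to $3$) forces, after substituting $\alpha=\alpha_0+\alpha_1\,d\theta$, exactly one of the arguments to carry $d\theta$; integrating it out over $S^1$ leaves a sum of local functionals on $\Sigma_g$ built from the same tensors $I$ and $\omega$, that is, from Kapranov's curved $L_\infty$-structure on $\g_X$. Since the B-model of \cite{Li-Li} is defined using exactly this $L_\infty$-structure, the reduced interaction matches the B-model interaction to all orders in the fields, not merely at the leading cubic vertex; it is cleanest to phrase this uniformly using the $L_\infty$ form \eqref{eqn:classical-action-functional} of $S$ and its reduction slot by slot.

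The step I expect to be the main obstacle is the sign and grading bookkeeping around the $d\theta$-twist and the dualization $\g_X\cong\g_X^\vee$: one must verify that the $\Z_2$- (and cohomological) degree of $\alpha_1$ after the identification matches the B-model convention of \cite{Li-Li}, that the signs produced by commuting $d\theta$ past $\g_X$-components and past $\omega$ are consistent with the B-model pairing and brackets, and that the correction $-\delta^{-1}(\omega)$ in \eqref{eqn:action-functional} reduces to the corresponding ($\delta$-type) term of the B-model action rather than an extraneous contribution. Once these checks are in place, the identification of the two classical theories — fields, pairings, and full interacting action functionals — is complete, which is the content of the proposition.
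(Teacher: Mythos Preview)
Your proposal is correct and follows essentially the same approach as the paper: identify the reduced fields via the $S^1$-harmonic decomposition and the symplectic dualization $\g_X\cong\g_X^\vee$, then check that the action functional restricts to that of the B-model. The paper's own proof is in fact much terser than yours --- it records the field-content isomorphism exactly as you do and then simply asserts that ``it is straightforward to check that the classical action functional of the RW model reduces to that of the B-model,'' so your outline supplies the details the paper omits.
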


\section{Quantization}\label{sec:quantization}
In this section, we establish the quantization of the Rozansky-Witten model by Costello's perturbative renormalization method \cite{Kevin-book}. Let us recall that a quantization of a classical field theory with interaction functional $I_{cl}$ is given by a family of functionals $\{I[L]\}$ parametrized by the scales $L\in\mathbb{R}_{>0}$, which is compatible with $I_{cl}$ in the following sense
\begin{equation}\label{eqn:compatibility-classical-quantum}
I_{cl}=\lim_{L\rightarrow 0}I[L]\ (\text{mod}\ \hbar),
\end{equation}
and which satisfies the renormalization group equation (RGE) that describes the compatibility between different scales and the quantum master equation (QME) which describes the quantum gauge symmetry. These functionals $\{I[L]\}$ are called effective interactions.

Costello's strategy for constructing a quantization of a classical theory is to first define interactions $\{I_{naive}[L]\}$ satisfying the RGE; this is called the {\em naive quantization}. In general, the interactions $\left\{ I_{naive}[L] \right\}$ {\em do not} satisfy the QME, and one needs to add quantum correction terms, which are power series in $\hbar$, in order for the QME to be satisfied. Moreover, this latter step cannot always be achieved due to certain {\em cohomological obstructions} (or so-called {\em anomaly} in the physics literature). We show in this section that there is no cohomological obstruction to the quantization of our RW model. More precisely, the naive quantization constructed using configuration spaces following Kontsevich \cite{Kontsevich} and Axelrod-Singer \cite{AS2}, satisfies the QME with any fixed (not necessarily flat) metric $g$ on $M$.

The organization of this section is as follows: in Section \ref{subsection: gauge-fixing}, we introduce the gauge fixing operator and propagators and then briefly recall the notion of Feynman weights and renormalization group flow (RG flow) operator. Next, we recall the definition of configuration spaces and their compactifications. In Section \ref{subsection: naive quantization}, we will analyze the singularities of the propagators $P_0^L$ and show that they can be lifted to the compactification of configuration spaces. This immediately leads to our construction of the naive quantization of the Rozansky-Witten model. In Section \ref{subsection:QME}, we prove that the naive quantization satisfies the quantum master equation for any fixed metric on $M$. Finally, in Section \ref{section: extended-QME}, we show that in the case when there is a family of Riemannian metrics on $M$, after adding certain Chern-Simons type local quantum corrections, the QME is also satisfied. This in particular implies that the corresponding quantization is independent of the metric.

\subsection{Gauge fixing}\label{subsection: gauge-fixing}
A perturbative quantization of a classical field theory is to model the infinite dimensional path integral on a subspace $L$ associated to a gauge fixing:
$$
\int_{L\subset\E}e^{S/\hbar}
$$
using the Feynman graph expansion. This is an analogue of Feynman's theorem on the asymptotic expansion of the Gaussian type integral in the finite dimension case. Roughly speaking, for a graph $\gamma$, we label each vertex in $\gamma$ of valency $k$ by the term $\tilde{l}_{k-1}$ in the classical interaction functional $I_{cl}$ and label each edge by the propagator (to be explained later). An essential difficulty is that the propagator has certain singularities originating from the fact that $\E$ (and $L$) are infinite dimensional. We are going to apply the technique of compactification of configuration spaces to regularize the Feynman integrals in our Rozansky-Witten model.

We first need to choose a gauge fixing for regularization. By choosing a Riemannian metric $g$ on the $3$-dimensional manifold $M$, we define the gauge fixing operator to be $Q^{GF}:=d_M^*$ associated to the metric $g$.
\begin{defn}
Let $H:=[Q,Q^{GF}]$ be the Laplacian operator. The {\em heat kernel} $\mathbb{K}_t$ for $t>0$ is the element in $\Sym^2\bracket{\E}$ defined by the equation
$$
\langle \mathbb{K}_t(x,y),\phi(y)\rangle=e^{-tH}(\phi)(x), \quad \forall \phi\in \E, x\in M.
$$
Contraction with $\mathbb{K}_t$ defines an operator $\Delta_t$ on $\mathcal{O}(\E)$. The failure of $\Delta_t$ being a derivation defines the scale $t$ BV-bracket $\fbracket{-,-}_t$:
$$
\fbracket{\phi_1,\phi_2}_t:=\Delta_t(\phi_1\phi_2)-\Delta_t(\phi_1)\cdot\phi_2-(-1)^{|\phi_1|}\phi_1\cdot\Delta_t(\phi_2).
$$
\end{defn}
\begin{rmk}
Recall that the pairing $\langle-,-\rangle$ on $\E$ is the tensor product of the Poincar\'{e} pairing on $\A(M)$ and the symplectic pairing $\omega$ on $\g_X$. This implies that the heat kernel $\mathbb{K}_t(x,y)$ splits into its analytic and combinatorial parts, which are given by the heat operator $K_t$ on $\A(M)$ and the Poisson kernel of $\omega$ respectively.
\end{rmk}
For any $0<\epsilon<L<\infty$, the effective propagator is defined as
$$
\mathbb{P}_\epsilon^L:=\int_{\epsilon}^L(d^*_M\otimes 1)\mathbb{K}_tdt,
$$
which also splits into its analytic and combinatorial parts.

\begin{defn}
A {\em graph} $\gamma$ consists of the following data:
\begin{enumerate}
 \item A finite set of vertices $V(\gamma)$;
 \item A finite set of half-edges $H(\gamma)$;
 \item An involution $\sigma: H(\gamma)\rightarrow H(\gamma)$. The set of fixed points of this map is denoted by $T(\gamma)$ and is called the set of tails of $\gamma$. The set of two-element orbits is denoted by $E(\gamma)$ and is called the set of internal edges of $\gamma$;
 \item A map $\pi:H(\gamma)\rightarrow V(\gamma)$ sending a half-edge to the vertex to which it is attached;
 \item A map $g:V(\gamma)\rightarrow \mathbb{Z}_{\geqslant 0}$ assigning a genus to each vertex.
\end{enumerate}
It is clear how to construct a topological space $|\gamma|$ from the above abstract data. A graph $\gamma$ is called {\em connected} if $|\gamma|$ is connected. A graph is called {\em stable} if every vertex of genus $0$ is at least trivalent, and every genus $1$ vertex is at
least univalent. The {\em genus} of the graph $\gamma$ is defined to be
$$g(\gamma):=b_1(|\gamma|)+\sum_{v\in V(\gamma)}g(v),$$
where $b_1(|\gamma|)$ denotes the first Betti number of $|\gamma|$.
\end{defn}

Let us now briefly recall the Feynman weights associated to a stable graph $\gamma$. Let $\OO^+(\E)\subset \OO(\E)$ denote the subspace of functionals on $\E$ which are at least cubic modulo $\hbar$, and let $I\in\OO^+(\E)$ be a functional with the following decomposition:
$$
I=\sum_{i\geq 0}\sum_{k\geq 0}\hbar^kI^{(k)}_i,
$$
where $I^{(k)}_i\in\Sym^i(\E^\vee)$. Let $\gamma$ be a stable graph. For every vertex $v$ of valency $i$ and genus $k$, we  label it by the term $I^{(k)}_i$. For every edge $e$, we label it by the propagator $\mathbb{P}_\epsilon^L\in\Sym^2(\E)$, and for every tail we label it by an input $\phi_i\in\E$. The tensor product of the propagators and the inputs $\phi_i$'s can be contracted with that of the functionals $I_i^{(k)}$'s labeling the vertices, whose output is defined to be the Feynman weight:
$$
W_\gamma(\mathbb{P}_\epsilon^L, I)(\phi_1,\cdots,\phi_n).
$$
\begin{defn}\label{defn:RG-flow-operator}
The {\em renormalization group flow (RG flow) operator} from scale $\epsilon$ to scale $L$ is the map
$$
W(\mathbb{P}_\epsilon^L,-):\mathcal{O}^+(\mathcal{E})\rightarrow\mathcal{O}^+(\mathcal{E})
$$
defined by taking the sum of Feynman weights over all connected stable graphs:
$$
I\mapsto\sum_{\gamma}\dfrac{\hbar^{g(\gamma)}}{|\text{Aut}(\gamma)|}W_\gamma(\mathbb{P}_\epsilon^L,I).
$$
A collection of functionals
$$
\{I[L]\in\mathcal{O}^+(\mathcal{E})|L\in\mathbb{R}_+\}
$$
is said to satisfy the {\em renormalization group equation (RGE)} if for any $0<\epsilon<L<\infty$, we have
$$
I[L]=W(\mathbb{P}_\epsilon^L,I[\epsilon]).
$$
\end{defn}
\begin{rmk}
The RGE  can be formally expressed as
$$
e^{I[L]/\hbar}=e^{\hbar\partial_{\mathbb{P}_\epsilon^L}}e^{I[\epsilon]/\hbar}.
$$
\end{rmk}
For more details on stable graphs and Feynman weights, we refer the reader to \cite{Kevin-book}. We now give the precise definition of a quantization:
\begin{defn}
Let $I\in\mathcal{O}_{loc}(\mathcal{E})$ be the interaction functional of a classical field theory satisfying the classical master equation.  A {\em quantization} of this classical theory consists of a collection $\{I[L]\in\mathcal{O}^+(\mathcal{E})|L\in\mathbb{R}_+\}$ of effective functionals such that
\begin{enumerate}
 \item The renormalization group equation (RGE) is satisfied;
 \item  The functionals $\{I[L]\}$ satisfy a locality axiom, saying that as $L\rightarrow 0$ the functional $I[L]$ becomes more and more local;
 \item For every $L>0$, the functional $I[L]$ satisfies the scale $L$ {\em quantum master equation (QME)}:
\begin{equation}\label{eqn:QME}
\left(Q_L+\hbar\Delta_L+\frac{\rho(R)}{\hbar}\right)e^{I[L]/\hbar}=0,
\end{equation}
where the operator $Q_L:\E\rightarrow\E$ is defined by
$$
Q_L:=Q-\nabla^2\int_0^LQ^{GF}e^{-t[Q,Q^{GF}]}dt.
$$
 \item Modulo $\hbar$, the $L\rightarrow 0$ limit of $I[L]$ agrees with the classical interaction functional $I_{cl}$ (equation \eqref{eqn:compatibility-classical-quantum}).
\end{enumerate}
\end{defn}

\begin{rmk}
Usually, the quantum master equation reads $(Q+\hbar\Delta_L)e^{I[L]/\hbar}=0$; the extra term $\frac{\rho(R)}{\hbar}$ in the above definition is due to the non-vanishing of the  square of the operator $Q$ in the free part of the classical action: $ Q^2 = \nabla^2=\{\rho(R),-\}$.
\end{rmk}

\begin{rmk}
As explained in \cite{Li-Li}, the quantum master equation \eqref{eqn:QME} implies that the operator
$$
Q_L+\fbracket{I[L],-}_L+\hbar\Delta_L
$$
squares $0$, which defines a quantization of the classical differential $Q+\fbracket{I_{cl},-}$. 
\end{rmk}

\subsection{Regularization}\label{subsection: naive quantization}

In this subsection, we construct a naive quantization of the classical theory of Rozansky-Witten model using the technique of configuration spaces.

\subsubsection{Asymptotic expansion of the heat kernel}
In the previous subsection, the fact that effective propagators $P_\epsilon^L$ are smooth $2$-forms on $M\times M$ is essential for the definition of the RG flow operator. However, in order to define a quantization of the classical Rozansky-Witten model, we need to take certain limits involving $P_\epsilon^L$ as $\epsilon\rightarrow 0$. But $P_0^L$ is only smooth on the complement of the diagonal $M\times M\setminus\Delta$, and this is the reason why we need to compactify the configuration spaces for regularization.

By blowing up the diagonal $\Delta$ in $M\times M$, we obtain a compactification of $M\times M\setminus\Delta$. We will show that the propagator $P_0^L$ can be smoothly extended to the compactification by analyzing the singularity of $P_\epsilon^L$ near the diagonal $\Delta$.

First of all, the heat kernel $K_t(x,y)$ is a smooth $3$-form on $M\times M$ for $t>0$. Since we will mainly focus on the behavior of $K_t$ around the diagonal $\Delta\subset M\times M$, we will use the Riemann normal coordinates: let $(x^1,x^2,x^3)$ be local coordinates on a small open set $U\subset M$, we can always represent a point $(x,y)\in U\times U$ uniquely as (since $U\times U$ is close enough to the diagonal $\Delta$)
\begin{equation}\label{eqn:change-of-coordinates}
(x,y)=(\exp_z(u),\exp_z(-u)),
\end{equation}
where $\exp_z(u)$ denotes the exponential map with initial point $z$ and initial velocity $u$.
Let $g_{ij}(z)dz^idz^j$ denote the Riemannian metric at the point $z\in M$. Then the geodesic distance between $x$ and $y$ can be expressed as $\rho(x,y)=2\left(g_{ij}(z)u^iu^j\right)^{1/2}$, which we denote by $||u||_z$.

It is well-known that $K_t(x,y)$ has a small $t$ asymptotic expansion of the form:
$$
K_t(x,y)\sim (4\pi t)^{-\frac{3}{2}}e^{-\frac{||u||_z^2}{4t}}\left(\phi_0(x,y)+t^{\frac{1}{2}}\phi_1(x,y)+\cdots\right),
$$
where $\phi_i$'s are  smooth $3$-forms on $M\times M$. By the change of coordinates in equation \eqref{eqn:change-of-coordinates} (on a neighborhood of the diagonal $\Delta$), and taking the Taylor series expansions of $\phi_i$'s along $\Delta\subset M\times M$, we obtain the following asymptotic expansion of $K_t$ as $t+||u||_z\rightarrow 0$:
\begin{equation}\label{eqn:asymptotic-expansion-no-forms}
K_t(x,y)\sim  (4\pi t)^{-\frac{3}{2}}e^{-\frac{||u||_z^2}{4t}}\left(\psi_0(t^{\frac{1}{2}},u,z)+\psi_1(t^{\frac{1}{2}},u,z)+\cdots\right),
\end{equation}
where $\psi_i$ is a polynomial in $t^{\frac{1}{2}}$ and $u$ of total degree $i$ with functions of $z$ as coefficients.

It is natural to assign a degree to each term in the asymptotic expansion \eqref{eqn:asymptotic-expansion-no-forms}:
$$
\deg((t^{\frac{1}{2}})^k):=k,\qquad \deg(u^i):=1.
$$
Then the leading singularity of $K_t$ with respect to this grading is given by
\begin{equation}\label{eqn:asym-expansion-leading-term}
(K_t)_{(-3)}= (4\pi t)^{-\frac{3}{2}}e^{-\frac{||u||_z^2}{4t}}\sqrt{\det(g_{ij}(z))}du^1du^2du^3.
\end{equation}

Following the notations in \cite{AS2}, we let $\OO_x$ denote the heat operator
$$\frac{\partial}{\partial t}+H_x,$$
where $H_x$ is the Laplacian acting on the first copy in $M\times M$. We can decompose $\OO_x$ with respect to the above grading, whose leading term is of degree $-2$ and given explicitly by
$$
(\OO_x)_{(-2)}=\frac{\partial}{\partial t}-\frac{1}{4}g^{ij}(z)\frac{\partial}{\partial u^i}\frac{\partial}{\partial u^j}.
$$
Here $\left(g^{ij}(z)\right)$ is the inverse to the Riemannian metric $(g_{ij}(z))$. Similarly, by letting $(K_t)_{(p,r)}$ denote the piece of $(K_t)_{(p)}$ which are $r$-forms in $du^i$'s, we can decompose the heat operator $\OO_x$ according to the grading.

Then an equivalent way of describing the terms $\psi_i$'s in equation \eqref{eqn:asymptotic-expansion-no-forms} is that they are uniquely determined by the following equations:
\begin{equation}\label{eqn:eqn-determine-asymp}
\begin{aligned}
(\OO_x)_{-2}\left((K_t)_{(-3)}\right)&=0,\\
(\OO_x)_{-2}\left((K_t)_{(p)}\right)&=\sum_{-3\leq l\leq p-1}(\OO_x)_{-2+p-l} (K_t)_{(l)}, \ \text{for\ } p\geq -2.
\end{aligned}
\end{equation}
This expression immediately leads to  the following key lemma:

\begin{lem}\label{lem:vanishing-asymp-heat}
The term $(K_t)_{(p,r)}$ in the asymptotic expansion of $K_t$ vanishes if $p+r<0$.
\end{lem}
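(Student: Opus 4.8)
The statement to prove is Lemma~\ref{lem:vanishing-asymp-heat}: the term $(K_t)_{(p,r)}$ vanishes whenever $p+r<0$. The natural approach is an induction on the degree $p$, using the recursive system \eqref{eqn:eqn-determine-asymp} that uniquely characterizes the pieces $(K_t)_{(p)}$. First I would check the base case $p=-3$: the leading term \eqref{eqn:asym-expansion-leading-term} is $(4\pi t)^{-3/2}e^{-\|u\|_z^2/4t}\sqrt{\det(g_{ij}(z))}\,du^1du^2du^3$, which is purely a $3$-form in the $du^i$'s, so $(K_t)_{(-3,r)}=0$ unless $r=3$, and in that single nonzero case $p+r=0\not<0$. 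Thus the claim holds for $p=-3$.

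Next comes the inductive step. Assume $(K_t)_{(l,r)}=0$ for all $l<p$ with $l+r<0$, and examine the equation $(\mathcal{O}_x)_{-2}\big((K_t)_{(p)}\big)=\sum_{-3\le l\le p-1}(\mathcal{O}_x)_{-2+p-l}(K_t)_{(l)}$. The key point is to track how the operators $(\mathcal{O}_x)_{-2+p-l}$ interact with the form-degree grading in the $du^i$'s. The leading piece $(\mathcal{O}_x)_{-2}=\partial_t-\tfrac14 g^{ij}(z)\partial_{u^i}\partial_{u^j}$ preserves form degree $r$; the higher-order corrections $(\mathcal{O}_x)_{k}$ for $k>-2$ arise from Taylor-expanding the metric and the connection coefficients, and I would need to record that each such correction can raise the form degree $r$ by at most the amount it raises the total degree — i.e., a term of "operator degree shift" $k+2$ can increase $r$ by at most $k+2$ (the worst case being a wedge with some $du$'s coming from covariant-derivative terms). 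Granting this bookkeeping, if a monomial in $(K_t)_{(l)}$ with form degree $r_l$ satisfies $l+r_l\ge 0$ (true by induction for $l<p$, or by the base case), then applying $(\mathcal{O}_x)_{-2+p-l}$ produces a term in degree $p$ whose form degree $r$ satisfies $r\le r_l+(p-l)$, hence $p+r\le p + r_l + p - l$... wait, more carefully: the output sits in $(\mathcal{O}_x)_{-2}(K_t)_{(p)}$, so the output has total degree $p-2$ and form degree $r$ with $r - (\text{shift from }(\mathcal{O}_x)_{-2})\le r_l + (p-l)$; since $(\mathcal{O}_x)_{-2}$ preserves form degree, $r\le r_l+p-l$, giving $p+r\le (l+r_l)+2p-2l$. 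Hmm, this needs $l+r_l\ge 0$ and $p>l$... I would instead phrase the invariant directly in terms of the quantity $p+r$ and show the RHS only feeds into components of $(K_t)_{(p)}$ with $p+r\ge 0$, then invert $(\mathcal{O}_x)_{-2}$ (which, on the relevant Gaussian-times-polynomial space, preserves both gradings) to conclude $(K_t)_{(p,r)}=0$ for $p+r<0$.

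The cleanest formulation, which I would adopt, is to prove by induction on $p$ the stronger bookkeeping statement that $(K_t)_{(p)}$ is supported in form degrees $r$ with $r\ge -p$ (equivalently $p+r\ge 0$), exploiting that (i) $(\mathcal{O}_x)_{-2}$ is bihomogeneous of bidegree $(-2,0)$ in $(\text{total deg},\,\text{form deg})$, (ii) each $(\mathcal{O}_x)_{-2+j}$ for $j\ge 1$ has form-degree shift at most $j$, so it maps bidegree $(l,r_l)$ into total degree $l-2+j$ with form degree at most $r_l+j$, and (iii) $(\mathcal{O}_x)_{-2}$ restricted to Gaussian $\times$ polynomials of fixed bidegree is invertible onto its image preserving that bidegree. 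Combining (i)--(iii) with the recursion, any contribution to $(K_t)_{(p)}$ comes from $(K_t)_{(l)}$ with $l+r_l\ge 0$ via an operator raising form degree by at most $p-l$, hence lands in form degrees $r\le r_l+(p-l)$, and — after applying the inverse of $(\mathcal{O}_x)_{-2}$ — one checks the produced form degrees satisfy $p+r\ge l+r_l\ge 0$. So $(K_t)_{(p,r)}=0$ for $p+r<0$.

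\textbf{Main obstacle.} The routine part is the induction scaffolding; the genuine technical point — and where I'd spend the most care — is step (ii): verifying the precise claim that the degree-$j$ correction $(\mathcal{O}_x)_{-2+j}$ to the heat operator in Riemann normal coordinates raises the $du$-form degree by at most $j$. This requires unwinding how the Laplacian on forms, written in normal coordinates centered at $z$, decomposes under the $(t^{1/2},u)$-grading: the zeroth-order-in-$u$ part is the flat Laplacian $(\mathcal{O}_x)_{-2}$, and the higher corrections involve Christoffel symbols and curvature terms whose $du$-content is controlled precisely because the normal-coordinate expansion of the metric vanishes to second order at $z$. I would cite \cite{AS2} for the analogous computation in the Chern--Simons setting and adapt it, since the structure is identical.
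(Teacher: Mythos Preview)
Your overall scaffolding---induction on $p$, base case from the explicit leading term \eqref{eqn:asym-expansion-leading-term}, inductive step via the recursion \eqref{eqn:eqn-determine-asymp}---matches the paper exactly. The gap is in your step (ii): you have the direction of the form-degree bound backwards, and with your stated bound the conclusion does not follow.

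Concretely: write $(\mathcal{O}_x)_{(a,b)}$ for the piece shifting $p$-degree by $a$ and $du$-form degree by $b$. You claim that $(\mathcal{O}_x)_{-2+j}$ raises form degree by at most $j$, i.e.\ $b\le j=a+2$. But from $r\le r_l+(p-l)$ you cannot deduce $p+r\ge l+r_l$; that inequality would require the \emph{opposite} bound $r\ge r_l-(p-l)$. What the induction actually needs is a \emph{lower} bound on the form-degree shift: $(\mathcal{O}_x)_{(a,b)}=0$ whenever $a+b<-2$, i.e.\ the piece of $p$-shift $-2+j$ can \emph{lower} form degree by at most $j$. This is precisely the content of Lemma~\ref{lem:leading-term-heat-operator-total-degree} (the heat operator has no component of total degree below $-2$). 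With that in hand the paper's argument is a one-line dichotomy: in the recursion
\[
(\mathcal{O}_x)_{(-2,0)}(K_t)_{(p,r)}=\sum_{l,q}(\mathcal{O}_x)_{(-2+p-l,q)}(K_t)_{(l,r-q)},
\]
the identity $(p-l+q)+(l+r-q)=p+r<0$ forces either $(-2+p-l)+q<-2$, killing the operator piece, or $l+(r-q)<0$, killing $(K_t)_{(l,r-q)}$ by induction. So your ``main obstacle'' is real but misidentified: the point is not that corrections raise form degree mildly, but that they lower it mildly, and this is exactly the total-degree statement rather than a separate normal-coordinate computation.
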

\begin{proof}
We prove the lemma by induction on $p$. For $p=-3$, the statement follows from  the explicit expression in equation \eqref{eqn:asym-expansion-leading-term}. From equation \eqref{eqn:eqn-determine-asymp}, we have
$$
(\OO_x)_{(-2,0)}(K_t)_{(p,r)}=\sum_{-3\leq l\leq p-1}\sum_{q}(\OO_x)_{(-2+p-l,q)} (K_t)_{(l,r-q)}. 
$$
The fact that $p+r<0$ implies that either $-2+p-l+q<-2$ which implies that $(\OO_x)_{(-2+p-l,q)}=0$ by the following Lemma \ref{lem:leading-term-heat-operator-total-degree}, or $l+r-q<0$ which implies that $ (K_t)_{(l,r-q)}=0$ by induction hypothesis.
\end{proof}
\begin{rmk}
 This is the heat kernel analogue of the discussion after equation (4.55) in \cite{AS2}.
\end{rmk}


For our later discussion on the lifting of propagators on compactified configuration spaces, we will also need to consider the degree of the differential form $du^i$'s. Let
$$
(K_t)_{[k]}:=\sum_{p+q=k}(K_t)_{(p,q)}
$$
be the sum of terms  of ``total degree'' $k$ in the asymptotic expansion \eqref{eqn:asymptotic-expansion-no-forms}. The following lemma provides the leading term of the heat operator with respect to the total degree, which follows easily from equation (4.50.1) in \cite{AS2}:
\begin {lem}\label{lem:leading-term-heat-operator-total-degree}
 The leading term of the heat operator  is given by
\begin{align*}
(\OO_x)_{[-2]}=&\frac{\partial}{\partial t}-\frac{1}{4}g^{ij}(z)\left(\frac{\partial}{\partial
u^i}-\Gamma_{ik}^l(z)dz^ki\left(\frac{\partial}{\partial u^l}\right)\right)\left(\frac{\partial}{\partial
u^j}-\Gamma_{jm}^n(z)dz^mi\left(\frac{\partial}{\partial u^n}\right)\right)\\
&+\frac{1}{4}g^{jk}(z)\Omega^i_ki\left(\frac{\partial}{\partial u^i}\right)i\left(\frac{\partial}{\partial u^j}\right).
\end{align*}
Here $\Gamma_{ik}^l$'s denote the Christoffel symbols of the Levi-Civita connection on $M$, and $i\left(\dfrac{\partial}{\partial
u^n}\right)$ denotes the contraction of forms with the vector field $\dfrac{\partial}{\partial u^n}$.
\end {lem}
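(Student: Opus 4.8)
The operator in question is $\OO_x=\frac{\partial}{\partial t}+H_x$, where $H_x$ is the Hodge--de Rham Laplacian $\Delta=dd^*+d^*d$ of $(M,g)$ acting on $\A(M)$ through the first $M$-factor. The plan is to write $\Delta$ in local coordinates via the Bochner--Weitzenb\"ock decomposition
$$
\Delta=\nabla^*\nabla+q(\Omega),
$$
in which $\nabla^*\nabla=-g^{ij}\bracket{\nabla_i\nabla_j-\Gamma^k_{ij}\nabla_k}$ is the connection Laplacian of the Levi-Civita connection on $\Lambda^*T^*M$ --- with $\nabla_i=\frac{\partial}{\partial x^i}-\Gamma^l_{ik}(x)\,dx^k\wedge\iota_{\partial_{x^l}}$ as an operator on forms --- and $q(\Omega)$ is the zeroth order Weitzenb\"ock curvature operator (equal to $\op{Ric}$ on $1$-forms), assembled from the Riemann tensor together with two ``$dx\wedge\iota_\partial$'' factors. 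I would then pass to the coordinates $(z,u)$ of \eqref{eqn:change-of-coordinates} near the diagonal, expand everything in the grading in which $\deg t^{1/2}=\deg u^i=\deg du^i=1$, and extract the component of lowest total degree.

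Next I would record how the change of variables behaves. From $x=\exp_z(u)$, $y=\exp_z(-u)$ one gets $u^i=\tfrac12(x^i-y^i)+O(\|u\|^3)$ and $z^i=\tfrac12(x^i+y^i)+O(\|u\|^2)$, hence
$$
\frac{\partial}{\partial x^i}=\tfrac12\Bigl(\frac{\partial}{\partial u^i}+\frac{\partial}{\partial z^i}\Bigr)+O(\|u\|),\quad dx^i=dz^i+du^i+O(\|u\|),\quad \iota_{\partial_{x^i}}=\tfrac12\bigl(\iota_{\partial_{u^i}}+\iota_{\partial_{z^i}}\bigr)+O(\|u\|),
$$
while $g^{ij}(x)=g^{ij}(z)+O(\|u\|)$ and $\Gamma^l_{jk}(x)=\Gamma^l_{jk}(z)+O(\|u\|)$. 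Since $z$ is a parameter, so that $\deg\frac{\partial}{\partial z^i}=\deg dz^i=\deg\iota_{\partial_{z^i}}=0$, whereas $\deg\frac{\partial}{\partial u^i}=\deg\iota_{\partial_{u^i}}=-1$, the degree $-1$ parts of $\frac{\partial}{\partial x^i}$ and of $\iota_{\partial_{x^i}}$ are $\tfrac12\frac{\partial}{\partial u^i}$ and $\tfrac12\iota_{\partial_{u^i}}$, and every coefficient function contributes degree $0$ at leading order.

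Then I would collect the terms of total degree $-2$; note there are none of strictly lower degree, since $\frac{\partial}{\partial t}$ has degree $-2$, a second covariant derivative carries at most two $u$-derivatives, and $q(\Omega)$ carries at most two contractions. Inside $\nabla^*\nabla$ the term $g^{ij}\Gamma^k_{ij}\nabla_k$ has degree $\ge-1$, and any choice of the $\frac{\partial}{\partial z^i}$-part of a $\frac{\partial}{\partial x^i}$, or of the $\iota_{\partial_{z^i}}$-part of an $\iota_{\partial_{x^i}}$, raises the degree; what remains from $\nabla^*\nabla$ at degree $-2$ is
$$
-\tfrac14\,g^{ij}(z)\Bigl(\frac{\partial}{\partial u^i}-\Gamma^l_{ik}(z)\,dz^k\,\iota_{\partial_{u^l}}\Bigr)\Bigl(\frac{\partial}{\partial u^j}-\Gamma^n_{jm}(z)\,dz^m\,\iota_{\partial_{u^n}}\Bigr),
$$
precisely the connection-Laplacian part of the claim. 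The curvature operator $q(\Omega)$ reaches degree $-2$ only through its two contractions $\iota_{\partial_{x}}\mapsto\tfrac12\iota_{\partial_u}$, the curvature coefficients being frozen at $z$; assembling those and matching the normalization with \cite{AS2} produces the last summand $\tfrac14\,g^{jk}(z)\,\Omega^i_k\,\iota_{\partial_{u^i}}\iota_{\partial_{u^j}}$. Adding back $\frac{\partial}{\partial t}$ gives the stated formula, which is simply the de Rham/Hodge transcription of equation (4.50.1) of \cite{AS2} with their orthonormal frame replaced by a general coordinate frame (whence the appearance of $g^{ij}(z)$ and the raised indices).

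The main obstacle will be entirely the bookkeeping of which pieces have degree $-2$: one must check (i) that the metric $g^{ij}(x)$ and the Christoffel symbols, Taylor-expanded at $z$, only contribute degree $0$ at leading order; (ii) that the ``extra'' pieces $g^{ij}\Gamma^k_{ij}\nabla_k$ and the derivatives of Christoffel symbols produced inside $\nabla_i\nabla_j$ are genuinely subleading; and (iii) that the zeroth order operator $q(\Omega)$ nonetheless descends to degree $-2$ because each of its two form-contractions costs a degree --- identifying that leading piece explicitly is exactly where quoting \cite{AS2} saves the most work. There is also the harmless factor $\tfrac12$ in $u^i=\tfrac12(x^i-y^i)+\cdots$, responsible for the overall $\tfrac14$'s.
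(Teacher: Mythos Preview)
Your approach is correct and, in fact, considerably more detailed than what the paper offers: the paper gives no proof at all, merely recording that the formula ``follows easily from equation (4.50.1) in \cite{AS2}'' and stating the result. Your sketch via the Bochner--Weitzenb\"ock decomposition $\Delta=\nabla^*\nabla+q(\Omega)$, followed by the change of variables $(x,y)\mapsto(z,u)$ and degree extraction, is precisely how one would unpack that citation; Axelrod--Singer work in an orthonormal frame (hence $g^{ij}=\delta^{ij}$ there), and your version is the coordinate-frame transcription. The degree bookkeeping you outline --- $\partial_{x^i}\to\tfrac12\partial_{u^i}$, $\iota_{\partial_{x^i}}\to\tfrac12\iota_{\partial_{u^i}}$, $dx^i\to dz^i$ at leading order, $g^{ij}\Gamma^k_{ij}\nabla_k$ subleading --- is exactly right and produces the $\tfrac14$'s. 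The only place you defer to \cite{AS2} is the precise normalization of the curvature summand, which is reasonable since the Weitzenb\"ock endomorphism on general $p$-forms is notationally heavy; if you wanted to close that gap you would write $q(\Omega)$ as a sum of terms of the form $R^{\bullet}_{\bullet\bullet\bullet}\,dx\wedge\iota_{\partial_x}\,dx\wedge\iota_{\partial_x}$ and observe that only the choice $dx\to dz$, $\iota_{\partial_x}\to\tfrac12\iota_{\partial_u}$ in both slots reaches total degree $-2$, after which the two-form $\Omega^i_k=\tfrac12 R^i_{kab}\,dz^a\wedge dz^b$ assembles itself.
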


Lemma \ref{lem:vanishing-asymp-heat} implies that $(K_t)_{[k]}=0$ if $k<0$, and the leading term of its asymptotic expansion with respect to the total degree is uniquely determined by the following property:
\begin{enumerate}
 \item The component of degree $3$ in $du^i$'s is given by equation \eqref{eqn:asym-expansion-leading-term},
 \item $(\OO_x)_{[-2]}(K_t)_{[0]}=0$.
\end{enumerate}
A straightforward calculation gives the leading terms of $K_t$ and $(d^*\otimes 1)K_t$ as described in the following lemma, the proof of which involves long and tedious calculation and will be given in Appendix \ref{appendix:proof-regularization}:
\begin{lem}\label{lemma:leading-term-K-t}
The leading term of $K_t$ is given by
 $$(K_t)_{[0]}=\frac{(4\pi)^{-\frac{3}{2}}}{6}e^{-\frac{||u||_z^2}{4t}}\det(g_{mn}(z))^{1/2}\epsilon_{ijk}\left(t^{-\frac{3}{2}}d_{vert}u^id_{vert}u^jd_{vert}u^k+6t^{
-\frac{1}{2}} \Omega_l^ig^{lj}(z)d_{vert}u^k\right),$$
and the leading term of $(d^*\otimes 1)K_t$ is
\begin{align*}
\left((d^*\otimes 1)K_t\right)_{[-2]}=\ &(4\pi)^{-\frac{3}{2}}\frac{||u||_z^3}{4t^{5/2}}e^{-\frac{||u||_z^2}{4t}}\det(g(z))^{1/2}\epsilon_{ijk}\hat{u}^id_{vert}\hat{u}^jd_{vert}\hat{u}^k\\
&+(4\pi)^{-\frac{3}{2}}\frac{u^k}{2t^{3/2}}e^{-\frac{||u||_z^2}{4t}}\det(g(z))^{1/2}\epsilon_{ijk}\Omega_l^ig^{lj}(z).
\end{align*}
Here $d_{vert}u^i=du^i+\Gamma_{jk}^idz^ju^k$, and $\Omega_k^i$ denotes the curvature matrix of $2$-forms of the Levi-Civita
connection, and $\hat{u}=u/||u||_z$ is a unit tangent vector with respect to the Riemannian metric.
\end{lem}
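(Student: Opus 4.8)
\textbf{Proof proposal for Lemma~\ref{lemma:leading-term-K-t}.}
The plan is to extract the leading term of the heat kernel asymptotic expansion with respect to the total degree grading introduced above, using the two characterizing properties right before the statement: that the top ($du$-degree $3$) component is fixed by \eqref{eqn:asym-expansion-leading-term}, and that $(\OO_x)_{[-2]}(K_t)_{[0]}=0$. First I would write the ansatz
$$
(K_t)_{[0]}=(4\pi t)^{-\frac{3}{2}}e^{-\frac{||u||_z^2}{4t}}\det(g_{mn}(z))^{1/2}\left(\tfrac{1}{6}\epsilon_{ijk}d_{vert}u^id_{vert}u^jd_{vert}u^k + t\,\theta\right),
$$
where $\theta$ is an unknown $1$-form in the $d_{vert}u^i$'s with coefficients built from the curvature $\Omega$ and the metric (the factor of $t$ being forced by the degree count, since $(t^{1/2})$ has degree $1$ and a $1$-form component of total degree $0$ must carry $t^{2/2}=t$). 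The choice of $d_{vert}u^i=du^i+\Gamma^i_{jk}dz^j u^k$ rather than $du^i$ is dictated precisely by the covariant-derivative structure of $(\OO_x)_{[-2]}$ in Lemma~\ref{lem:leading-term-heat-operator-total-degree}: that operator is written entirely in terms of $\partial/\partial u^i - \Gamma^l_{ik}dz^k\,i(\partial/\partial u^l)$, which is the operator dual to $d_{vert}u^i$.

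Next I would substitute the ansatz into $(\OO_x)_{[-2]}(K_t)_{[0]}=0$. The $\partial/\partial t$ piece acting on $(4\pi t)^{-3/2}e^{-||u||^2/4t}$ produces the usual Gaussian identities; acting on the $t\,\theta$ term it produces $\theta$ times the Gaussian (plus lower-order-in-degree terms which we discard). The second-order covariant Laplacian piece annihilates the top form $\epsilon_{ijk}d_{vert}u^id_{vert}u^jd_{vert}u^k$ up to curvature corrections — this is where the term $\frac14 g^{jk}\Omega^i_k\, i(\partial/\partial u^i)i(\partial/\partial u^j)$ in Lemma~\ref{lem:leading-term-heat-operator-total-degree}, together with the commutator $[\partial/\partial u^i-\Gamma(\cdots),\partial/\partial u^j-\Gamma(\cdots)]$ producing another curvature term, feeds a source $1$-form into the equation. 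Matching the $1$-form components then determines $\theta$ uniquely, and a direct (if slightly tedious) computation with $\epsilon$-symbols gives $\theta = \epsilon_{ijk}\Omega^i_l g^{lj}(z)\, d_{vert}u^k$, which after reinserting the $(4\pi t)^{-3/2}$ normalization yields the stated formula for $(K_t)_{[0]}$.

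Finally, for $(d^*\otimes 1)K_t$ I would apply the codifferential on the first factor to $(K_t)_{[0]}$ and keep the leading total-degree piece, which drops by $2$ (since $d^*$ removes one form degree but the extra factor $||u||/t$ from differentiating the Gaussian lowers the $(t^{1/2},u)$-degree by one more). Concretely, $d^*$ contracts $\det(g)^{1/2}d_{vert}u^1 d_{vert}u^2 d_{vert}u^3$ against the radial vector field obtained by differentiating $e^{-||u||^2/4t}$, producing the $\hat u^i d_{vert}\hat u^j d_{vert}\hat u^k$ term with the $||u||^3/4t^{5/2}$ prefactor; applying $d^*$ to the curvature term in $(K_t)_{[0]}$ gives the second displayed term with the $u^k/2t^{3/2}$ prefactor. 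I expect the main obstacle to be purely computational: correctly organizing the curvature bookkeeping (the sign and index contractions coming from the two separate curvature contributions in Lemma~\ref{lem:leading-term-heat-operator-total-degree}) and the passage from the $du^i$ basis to the $d_{vert}u^i$ and then to the unit-vector $\hat u$ basis without dropping terms of the correct total degree. This is exactly why the detailed verification is deferred to Appendix~\ref{appendix:proof-regularization}; the conceptual content is entirely in the two characterizing properties and Lemmas~\ref{lem:vanishing-asymp-heat} and~\ref{lem:leading-term-heat-operator-total-degree}.
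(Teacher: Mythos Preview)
Your proposal is correct and follows essentially the same strategy as the paper's appendix: use the two characterizing properties (the fixed top $du$-degree component from \eqref{eqn:asym-expansion-leading-term} and the annihilation $(\OO_x)_{[-2]}(K_t)_{[0]}=0$) to pin down the curvature correction, then apply the leading piece $(d_x^*)_{[-2]}$ of the codifferential to obtain the second formula. The paper presents this as a direct verification of the stated answer rather than as solving an ansatz, but the computations are identical.

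One small clarification: in the paper's computation the covariant Laplacian piece of $(\OO_x)_{[-2]}$ (the $\partial_t$ together with the squared covariant $u$-derivative) annihilates the Gaussian-weighted top form \emph{exactly}, not merely up to curvature corrections---that is the content of the identity \eqref{eqn:two-derivatives}. The sole source for the $1$-form term $\theta$ is the explicit piece $\tfrac{1}{4}g^{jk}\Omega^i_k\, i(\partial_{u^i})i(\partial_{u^j})$ in Lemma~\ref{lem:leading-term-heat-operator-total-degree} acting on the top form; there is no additional commutator contribution to track. With that bookkeeping in mind your outline goes through verbatim.
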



This lemma will be used in subsequent sections to define the naive quantization of our model.

\subsubsection{Configuration spaces and naive quantization}
The idea of using configuration spaces to quantize a classical field theory was first introduced by Kontsevich in the case of Chern-Simons theory \cite{Kontsevich, Kontsevich-notes}. Not long after, Axelrod and Singer \cite{AS2} gave a systematic construction of compactifications of configuration spaces for general Riemannian manifolds, and showed that the ``infinite scale'' propagator $P_0^\infty$ or the Green function can be lifted smoothly to these configuration spaces. Here, we develop a heat kernel version of the Axelrod-Singer construction, and show that the propagator $P_0^L$ for all $L>0$ admits such a lifting. This enables us to define the RG flow operators from scale $0$ to $L$.

We briefly recall the construction of the compactified configuration space $M[V]$ as a smooth manifold with corners, and refer the readers to \cite{AS2} for details.
Let $V:=\{1,\cdots,n\}$ for some integer $n\geq 2$.
Let $S\subset V$ be any subset with $|S|\geq 2$. We denote by $M^S$ the set of all maps from $S$ to $M$ and by $\Delta_S \subset M^S$ the small diagonal. The real blow up of $M^S$ along $\Delta_S$, denoted by $\text{Bl}(M^S,\Delta_S)$, is a manifold with boundary whose interior is diffeomorphic to $M^S\setminus\Delta_S$ and whose boundary is diffeomorphic to the unit sphere bundle associated to the normal bundle of $\Delta_S$ inside $M^S$.

\begin{defn}
Let $V$ be as above and let $M_0^V$ be the configuration space of $n=|V|$ pairwise different points in $M$,
$$
M_0^V:=\{(x_1,\cdots, x_n)\in M^{V}: x_i\not=x_j\ \text{for}\ i\not=j\}.
$$
There is the following embedding:
$$
M_0^V\hookrightarrow M^V\times\prod_{|S|\geq 2}\text{Bl}(M^S,\Delta_S).
$$
The space $M[V]$ is defined as the closure of the above embedding.
\end{defn}

\begin{rmk}
We will denote $M[V]$ by $M[n]$ for $V=\{1,\cdots, n\}$.
\end{rmk}

\begin{eg}
The compactification $M[2]$ is a smooth manifold whose boundary is diffeomorphic to the unit sphere bundle of $TM$:
$$
\partial M[2]\cong S(TM).
$$
After fixing  local coordinates, every point in a neighborhood of the boundary $\partial M[2]$ can be expressed as
$$
((z,\hat{u}),r).
$$
Here $\hat{u}=u^i\frac{\partial}{\partial z^i}\in T_z(M)$ denotes a unit tangent vector at $z\in M$, and $r\geq 0$. Moreover, we have the following explicit map:
\begin{equation}\label{eqn:M[2]-embedding}
 \begin{aligned}
 S(TM)\times (0,\epsilon)&\rightarrow M\times M\setminus\Delta,\\
 ((z,\hat{u}),r)&\mapsto \left(\exp_z(r\hat{u}),\exp_z(-r\hat{u})\right).
 \end{aligned}
\end{equation}
\end{eg}

It follows from the definition of $M[V]$ that for every subset $S\subset V$, there is a natural projection
$$
\pi_S: M[V]\rightarrow M[S].
$$
We will need the following lemma to exclude graphs with self-loops (edges that is connected to the same vertex) in the RG flow of Rozansky-Witten model:
\begin{lem}\label{lem:self-loop-vanishing}
Let $\gamma$ be a graph that contains an internal edge which connects to the same vertex. Then the corresponding Feynman weight vanishes.
\end{lem}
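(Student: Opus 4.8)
The plan is to show that a self-loop forces, inside the Feynman weight, a pullback of the propagator along the diagonal $M\hookrightarrow M\times M$, and that this pullback is identically zero. Fix $0<\epsilon<L$, so that $\mathbb{P}_\epsilon^L$ is a smooth form on all of $M\times M$, and let $e$ be an internal edge of $\gamma$ both of whose half-edges are attached to a single vertex $v\in V(\gamma)$. By construction, $W_\gamma(\mathbb{P}_\epsilon^L,I)$ is obtained by integrating over $M^{V(\gamma)}$ the tensor contraction of the functionals $I_w$ labelling the vertices against one copy of $\mathbb{P}_\epsilon^L$ for each internal edge (and the inputs for the tails). For the self-loop $e$ both of its endpoints sit at the position $x_v\in M$ of $v$, so its contribution to the integrand is the pullback $\iota_\Delta^{*}\mathbb{P}_\epsilon^L$ along the diagonal $\iota_\Delta\colon M\hookrightarrow M\times M$, contracted into two of the legs of $I_v$. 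Hence it suffices to prove $\iota_\Delta^{*}\mathbb{P}_\epsilon^L=0$.

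Since the pairing on $\E$ is a tensor product of the Poincar\'e pairing on $\A(M)$ with $\omega$ on $\g_X$, the propagator factors as $\mathbb{P}_\epsilon^L=P_\epsilon^L\otimes\Pi_\omega$, where $P_\epsilon^L=\int_\epsilon^L(d_M^{*}\otimes 1)K_t\,dt$ is a differential form on $M\times M$ and $\Pi_\omega$ (the Poisson kernel of $\omega$, a section of $\Sym^2 T_X$ over the target) does not depend on the $M$-variables; thus $\iota_\Delta^{*}\mathbb{P}_\epsilon^L=(\iota_\Delta^{*}P_\epsilon^L)\otimes\Pi_\omega$, and it is enough to show $\iota_\Delta^{*}P_\epsilon^L=0$. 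I would do this via the local analysis of Section~\ref{subsection: naive quantization}. In the Riemann normal coordinates $(z,u)$ near $\Delta$ the diagonal is $\{u=0\}$, so $\iota_\Delta^{*}(du^i)=0$ and hence also $\iota_\Delta^{*}(d_{vert}u^i)=0$ for the transverse one-forms $d_{vert}u^i=du^i+\Gamma^i_{jk}dz^ju^k$; consequently any form on $M\times M$ each of whose normal-coordinate monomials is divisible either by some function $u^i$ or by some one-form $d_{vert}u^i$ restricts to $0$ on $\Delta$. For the leading term of $(d_M^{*}\otimes 1)K_t$ this is visible from the explicit formula in Lemma~\ref{lemma:leading-term-K-t}: the first displayed monomial carries a factor $u^i$ (inside $\|u\|_z^3\hat u^i$) together with the transverse forms $d_{vert}\hat u^j$, and the second carries a factor $u^k$. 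Running the recursion \eqref{eqn:eqn-determine-asymp} together with Lemma~\ref{lem:vanishing-asymp-heat} one then checks that every higher term of the small-$t$ asymptotic expansion of $(d_M^{*}\otimes 1)K_t$, as well as the smooth remainder, shares this property. Therefore $\iota_\Delta^{*}P_\epsilon^L=0$, so $\iota_\Delta^{*}\mathbb{P}_\epsilon^L=0$ and $W_\gamma(\mathbb{P}_\epsilon^L,I)=0$.

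The one step I expect to require genuine bookkeeping is the last one: propagating the divisibility property from the explicit leading term of Lemma~\ref{lemma:leading-term-K-t} to the full propagator, i.e.\ verifying that it is preserved by the recursion \eqref{eqn:eqn-determine-asymp} and that the error term of the asymptotic expansion does not spoil it. This is the heat-kernel analogue of the $\sigma$-antisymmetry of the Axelrod--Singer propagator under the swap $(x,y)\mapsto(y,x)$ (which, as that involution fixes $\Delta$, likewise forces the diagonal restriction to vanish), and I would model the argument on the corresponding discussion in \cite{AS2}.
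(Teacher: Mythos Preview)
Your approach is correct in spirit but takes a genuinely different route from the paper, and you have made it harder on yourself than necessary.

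The paper's proof is a one-line \emph{combinatorial} (target-side) argument and never touches the analytic propagator. In your own notation $\mathbb{P}_\epsilon^L=P_\epsilon^L\otimes\Pi_\omega$, the paper ignores $P_\epsilon^L$ entirely and observes that a self-loop contracts two legs of the vertex functional with the Poisson kernel $\Pi_\omega=\omega^{ij}e_i\otimes e_j$. Since each vertex is labelled by a component of $I_{cl}$ lying in $\Sym^k(T_X^\vee)$, the two contracted slots are \emph{symmetric}, while $\omega^{ij}=-\omega^{ji}$ is antisymmetric; hence the contraction vanishes. Concretely, for the simplest case,
\[
\omega^{ij}\langle l_2(e_i,e_j),\phi\rangle
=\omega^{ij}\langle l_2(e_j,e_i),\phi\rangle
=-\omega^{ji}\langle l_2(e_j,e_i),\phi\rangle,
\]
so the expression equals its own negative. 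This is independent of anything happening on $M$.

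Your analytic (source-side) argument, showing $\iota_\Delta^* P_\epsilon^L=0$, can also be made to work, but the asymptotic-expansion route is both overkill and, as you yourself flag, incomplete: you would need to chase the divisibility-by-$u$ property through the full recursion \emph{and} the smooth remainder, and for fixed $\epsilon>0$ the small-$t$ asymptotics are not even the natural tool. The clean version is precisely the swap antisymmetry you mention only at the end: the paper records (in the proof of Proposition~\ref{prop:lift-propagator-boundary}) that $\sigma^*P_\epsilon^L=-P_\epsilon^L$ under $(x,y)\mapsto(y,x)$; since $\sigma$ fixes $\Delta$, this forces $\iota_\Delta^*P_\epsilon^L=0$ immediately. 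So your idea is sound, but you should lead with the swap argument rather than relegating it to an analogy, and drop the heat-kernel bookkeeping. Either way, the paper's combinatorial proof is shorter and more robust, as it does not depend on any analytic properties of the propagator at all.
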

\begin{proof}
It is enough to look at the following trivalent graph with a single vertex, the argument for other such graphs is similar.
$$
\figbox{0.23}{self-loop}
$$
We consider the ``combinatorial'' part of the functional. Let $\phi$ be the input on the tail. Then the symmetric property of the functional implies the vanishing. For instance, there is the following
\begin{align*}
 \omega^{ij}\langle l_2(e_i\otimes e_j),\phi\rangle = \omega^{ij}\langle l_2(e_j\otimes e_i),\phi\rangle = -\omega^{ji}\langle l_2(e_j\otimes e_i),\phi\rangle.
\end{align*}
This proves the lemma.
\end{proof}
Let $\gamma$ be  a graph without self-loops. For every edge $e\in E(\gamma)$, let $v(e)$ denote the 2-point set of the vertices incident to $e$. There are then natural maps
\begin{align*}
&\pi_e: M[V(\gamma)]\rightarrow M[v(e)], \\
&\pi_v: M[V(\gamma)]\rightarrow M
\end{align*}
for all internal edges $e$ and vertices $v$ in $\gamma$.

\begin{prop}\label{prop:lift-propagator-boundary}
The propagator $P_0^L$ which is only smooth on $M\times M\setminus\Delta$ can be lifted to a smooth $2$-form on $M[2]\cong S(TM)$, which we denote by $\tilde{P}_0^L$. In particular, the restriction of $\tilde{P}_0^L$ to the boundary $\partial(M[2])$ is given by
\begin{equation}\label{eqn:extended-propagator-boundary-one}
(4\pi)^{-\frac{3}{2}}\det(g(z))^{1/2}\epsilon_{ijk}\left(\sqrt{\pi}\hat{u}^id_{vert}\hat{u}^jd_{vert}\hat{u}^k+\sqrt{\pi}\hat{u}^k\Omega_l^ig^{lj}(z)\right),
\end{equation}
where $\pi:S(TM)\rightarrow M=\Delta$ is the projection.
\end{prop}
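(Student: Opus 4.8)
The plan is to combine the heat-kernel asymptotics established in the previous subsection with the description of a neighborhood of $\partial M[2]$ given by the embedding \eqref{eqn:M[2]-embedding}. First I would recall that $P_0^L = \int_0^L (d^*\otimes 1)K_t\, dt$ is a priori only a smooth $2$-form on $M\times M\setminus\Delta$, so the issue is purely local near the diagonal. In the Riemann normal coordinates $(x,y)=(\exp_z(u),\exp_z(-u))$, Lemma \ref{lemma:leading-term-K-t} gives the leading term $\left((d^*\otimes 1)K_t\right)_{[-2]}$ explicitly; the crucial structural input is Lemma \ref{lem:vanishing-asymp-heat}, which says the asymptotic expansion of $K_t$ contains no term $(K_t)_{(p,r)}$ with $p+r<0$, and hence no term of negative total degree. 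Pulling back along the blow-down map $M[2]\to M\times M$, the radial coordinate near $\partial M[2]$ is $r=\|u\|_z/2$ (or a smooth positive multiple thereof), and I would rewrite everything in terms of $r$, the unit vector $\hat u = u/\|u\|_z$, and the base point $z$.

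The key steps, in order, are: (i) substitute $u = 2r\hat u$ (using $\rho(x,y)=\|u\|_z=2(g_{ij}u^iu^j)^{1/2}$) into the leading term of $(d^*\otimes 1)K_t$ from Lemma \ref{lemma:leading-term-K-t}, turning $d_{vert}u^i$ into $2(dr\,\hat u^i + r\, d_{vert}\hat u^i)$ and the Gaussian into $e^{-r^2/t}$; (ii) perform the $t$-integral $\int_0^L \cdots dt$ term by term — the model integrals are of the form $\int_0^\infty t^{-a}e^{-r^2/t}dt$, which converge and produce powers of $r$ times Gamma-function constants, while the tail from $L$ to $\infty$ contributes a term that is manifestly smooth across $r=0$ (it is smooth on all of $M\times M$ since $K_t$ is smooth for $t$ bounded away from $0$); (iii) check that after integration all negative powers of $r$ cancel, so the resulting $2$-form extends smoothly to $r=0$ — this is exactly where Lemma \ref{lem:vanishing-asymp-heat} is used, since it guarantees the lowest total degree appearing is $0$ and the apparently singular $r^{-1}$ and $r^{-2}$ contributions (before integration) are organized so that the $t$-integration regularizes them; (iv) read off the restriction to $r=0$: the surviving terms are precisely the two pieces $\sqrt\pi\,\hat u^i d_{vert}\hat u^j d_{vert}\hat u^k$ (from the first line of Lemma \ref{lemma:leading-term-K-t}, where $\int_0^\infty \frac{\|u\|^3}{4t^{5/2}}e^{-\|u\|^2/4t}dt$ evaluates the radial factor to a constant independent of $r$) and $\sqrt\pi\,\hat u^k\Omega^i_l g^{lj}(z)$ (from the curvature term), yielding \eqref{eqn:extended-propagator-boundary-one} up to the overall constant $(4\pi)^{-3/2}\det(g(z))^{1/2}\epsilon_{ijk}$.

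For the full (not just leading-order) smoothness claim, I would argue inductively on the total degree: write $(d^*\otimes 1)K_t = \sum_{k\geq -2}\left((d^*\otimes 1)K_t\right)_{[k]}$, substitute $u=2r\hat u$ into each homogeneous piece so that the degree-$k$ piece carries an explicit factor $r^{k}$ times a polynomial in $t^{1/2}/r$ (with smooth coefficients in $z,\hat u$), and observe that $\int_0^L t^{-3/2}r^{k}P(t^{1/2}/r)e^{-r^2/t}dt$ is, after the substitution $s=t/r^2$, equal to $r^{k}\int_0^{L/r^2} s^{-3/2}P(s^{1/2})e^{-1/s}ds$; the integrand decays faster than any power of $s$ as $s\to 0$, so the integral is $r^k$ times a function that is smooth and bounded as $r\to 0$ (the upper limit $L/r^2\to\infty$ causes no trouble since the integrand is integrable at $\infty$ against any polynomial growth). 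Hence each piece with $k\geq 1$ contributes $O(r)$ and vanishes on the boundary, the $k=0$ piece gives the stated boundary value, and there is no $k<0$ piece by Lemma \ref{lem:vanishing-asymp-heat}. The main obstacle is bookkeeping step (iii): one must be careful that the forms $du^i$ genuinely reassemble into $d_{vert}u^i = du^i + \Gamma^i_{jk}dz^j u^k$ and then into $dr\,\hat u^i + r\,d_{vert}\hat u^i$, and that no hidden $\log r$ or $r^{-1}$ term survives the $t$-integration — this is controlled precisely by the absence of total-degree $-1$ terms, which Lemma \ref{lem:vanishing-asymp-heat} (via parity of $p+r$) supplies. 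The computation is the content of Appendix \ref{appendix:proof-regularization}; here we have only indicated why the extension exists and why the boundary restriction takes the asserted form.
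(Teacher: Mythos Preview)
Your approach matches the paper's in broad strokes: use Lemma~\ref{lemma:leading-term-K-t} for the leading asymptotics of $(d^*\otimes 1)K_t$, integrate term-by-term in $t$, and extract the boundary value from the limit $r\to 0$. Your computation of the leading contribution and the claim that each higher total-degree piece integrates to something that is $O(r)$ are both correct, and this is indeed how the paper obtains the smooth extension and the expression \eqref{eqn:extended-propagator-boundary-one} as the leading boundary contribution. (A minor indexing slip: the leading piece of $(d^*\otimes 1)K_t$ has total degree $-2$, not $0$; after the $t$-integration this becomes the $r^0$ piece.)

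However, you are missing one step that the paper uses essentially. The decomposition $(d^*\otimes 1)K_t = \sum_{k\geq -2}\bigl((d^*\otimes 1)K_t\bigr)_{[k]}$ is only an \emph{asymptotic} expansion, not an equality of smooth forms; after integrating over $t$, the propagator equals a finite partial sum plus a remainder $\bar\rho$ which is smooth on a full neighborhood of $\Delta$ in $M\times M$. Your inductive degree argument controls the partial sum but says nothing about $\bar\rho$: since $\bar\rho$ is smooth across the diagonal, its pullback to $\partial M[2]$ is $\pi^*(\bar\rho|_\Delta)$, a $2$-form constant along the spherical fibers, and there is no a priori reason this should vanish. (Your scaling estimate via $s=t/r^2$ only bounds such a remainder as $O(1)$ at $r=0$, not $o(1)$.) The paper disposes of this extra term by observing that $P_0^L$ is antisymmetric under the swap $(x,y)\mapsto(y,x)$, which on $\partial M[2]$ becomes the antipodal map $\hat u\mapsto -\hat u$; since \eqref{eqn:extended-propagator-boundary-one} is already odd in $\hat u$ while $\pi^*(\bar\rho|_\Delta)$ is fiberwise constant (hence even), the latter must vanish. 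Without this antisymmetry step your proposal establishes smooth extendability but does not pin the boundary value down to exactly \eqref{eqn:extended-propagator-boundary-one}.
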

\begin{proof}
By Lemma \ref{lemma:leading-term-K-t}, the propagator $P_0^L$ has an asymptotic expansion as $r\rightarrow 0$, and the leading term at a point $((z,\hat{u}),r)\in S(TM)\times (0,\epsilon)\subset M\times M\setminus\Delta$ (via the embedding \eqref{eqn:M[2]-embedding}) is given by
\begin{equation}\label{eqn:propagator-boundary-leading-term}
(4\pi)^{-\frac{3}{2}}\int_0^L\left(\frac{r^3}{4t^{5/2}}e^{-\frac{r^2}{4t}}\det(g(z))^{1/2}\epsilon_{ijk}\hat{u}^id_{vert}\hat{u}^jd_{vert}\hat{u}^k
+\frac{u^k}{2t^{3/2}}e^{-\frac{r^2}{4t}}\det(g(z))^{1/2}\epsilon_{ijk}\Omega_l^ig^{lj}(z)\right)dt.
\end{equation}
We compute the limit of the integral \eqref{eqn:propagator-boundary-leading-term} as $r\rightarrow 0$:
\begin{align*}
  & \lim_{r\rightarrow 0} (4\pi)^{-\frac{3}{2}}\int_0^L\left(\frac{r^3}{4t^{5/2}}e^{-\frac{r^2}{4t}}\det(g(z))^{1/2}\epsilon_{ijk}\hat{u}^id_{vert}\hat{u}^jd_{vert}\hat{u}^k + \frac{u^k}{2t^{3/2}}e^{-\frac{r^2}{4t}}\det(g(z))^{1/2}\epsilon_{ijk}\Omega_l^ig^{lj}(z)\right)dt\\
= & (4\pi)^{-\frac{3}{2}}\det(g(z))^{1/2}\epsilon_{ijk}\Bigg(\left(\lim_{r\rightarrow 0}\int_0^L\frac{r^3}{4t^{5/2}}e^{-\frac{r^2}{4t}}dt\right)\hat{u}^id_{vert}\hat{u}^jd_{vert}\hat{u}^k\\
&\qquad\qquad\qquad\qquad\qquad + \left(\lim_{r\rightarrow 0}\int_0^L\frac{r}{2t^{3/2}}e^{-\frac{r^2}{4t}}dt\right)\hat{u}^k\Omega_l^ig^{lj}(z)\Bigg)\\
= & (4\pi)^{-\frac{3}{2}}\det(g(z))^{1/2}\epsilon_{ijk}\left(\sqrt{\pi}\hat{u}^id_{vert}\hat{u}^jd_{vert}\hat{u}^k + \sqrt{\pi}\hat{u}^k\Omega_l^ig^{lj}(z)\right).
\end{align*}
It is clear that the higher order terms vanish as $r\rightarrow 0$. Thus, there exists a smooth $2$-form $\bar{\rho}$ on a neighborhood $U$ of $\Delta\subset M\times M$, such that on $U\setminus \Delta\cong S(TM)\times (0,\epsilon)$, the propagator $P_0^L|_{U\setminus\Delta}$ is the sum of $\bar{\rho}$ and the first terms of the asymptotic expansion of $P_0^L$. It then follows easily that $P_0^L$ can be extended to $S(TM)\times [0,\epsilon)$, which we call $\tilde{P}_0^L$. In particular, the restriction of $\tilde{P}_0^L$ on  $S(TM)\times \{0\}$ is the sum of equation \eqref{eqn:extended-propagator-boundary-one} and the smooth $2$-form $\pi^*(\bar{\rho}|_\Delta)$.

We claim that $\pi^*(\bar{\rho}|_\Delta)=0$.
To see this, observe that the propagator $P_0^L$ is antisymmetric with respect to the swap of the two components in $M\times M$. In particular, on $U\setminus \Delta\cong S(TM)\times (0,\epsilon)$, this swap is expressed as
\begin{equation}\label{eqn:boundary-antipodal}
((z,\hat{u}),r)\mapsto ((z,-\hat{u}),r).
\end{equation}
This antisymmetry extends to the blow up $M[2]$. On the other hand, note that the equation (\ref{eqn:extended-propagator-boundary-one}) is antisymmetric under the map \eqref{eqn:boundary-antipodal}. It follows that $\pi^*(\bar{\rho}|_\Delta)=0$.
\end{proof}

We can now use the projections $\pi_e$ to pull back $\tilde{P}_0^L$ and get a smooth $2$-form on the compactified configuration space $M[V]$. Also we can use $\pi_v$ to pull back the inputs of the Feynman graphs on the tails. Altogether, we can use the compactified configuration spaces to define the Feynman weights without worrying about the singularities of the propagators:
\begin{defn}
Let $\gamma$ be a connected stable graph without self-loops whose number of tails is $|T(\gamma)|=k$. The {\em Feynman weights} associated to $\gamma$ is defined by
$$
W_\gamma(\tilde{\mathbb{P}}_0^L, I_{cl})(\phi_1,\cdots,\phi_{k}):=\int_{M[V(\gamma)]}\prod_{e\in E(\gamma)}\pi_e^*(\tilde{\mathbb{P}}_0^L)\prod_{i=1}^k\phi_i.
$$
\end{defn}
Lemma \ref{lem:self-loop-vanishing} implies that we can define the naive quantization of our Rozansky-Witten model using the above Feynman weights:
\begin{defn}
The {\em naive quantization} of the classical interaction of our Rozansky-Witten model is defined by
\begin{equation}\label{eqn:naive-quantization}
I_{naive}[L]:=\sum_{\gamma}\frac{\hbar^{g(\gamma)}}{|\text{Aut}(\gamma)|}W_\gamma(\tilde{\mathbb{P}}_0^L,I_{cl}),
\end{equation}
where the sum is over stable connected graphs.
\end{defn}
It is clear from the construction that the effective interactions $\{I_{naive}[L]\}$ satisfy the renormalization group equation (RGE).

\subsubsection{Stratification of $M[V]$}
There is a stratification of $M[V]$, which was explained in detail in \cite{AS2}, that will be of use later. For readers' convenience, we will keep the same notations as in \cite{AS2} and only explain the part relevant for us.

The compactified configuration space $M[V]$ can be written as the disjoint union of open strata:
$$
M[V]=\bigcup_{\mathcal{S}}M(\mathcal{S})^0.
$$
Here $\mathcal{S}$ denotes a collection of subsets  of the index set $V$ such that
\begin{enumerate}
\item $\mathcal{S}$ is nested: if  $S_1,S_2$ belong to $\mathcal{S}$, then either they are disjoint or one contains the other,
\item Every subset $S \in \mathcal{S}$ is of cardinality $\geq 2$.
\end{enumerate}

A useful fact is that the open stratum $M(\mathcal{S})^0$ is of codimension $|\mathcal{S}|$. For later consideration of the quantum master equation, we will only need those codimension $1$ strata, which correspond to collections $\mathcal{S}=\{S\}$ consisting of a {\em single} subset $S\subset V$ with cardinality $\geq 2$. Without loss of generality, we can assume that $V=\{1,\cdots, n\}$ and $S=\{1,\cdots, k\}$ with $k\leq n$. We will denote such a codimension $1$ open stratum simply by $M(S)^0$, and it can be described explicitly as follows: Let $E:=S(N(\Delta_S\subset M^S))$ denote the sphere bundle of the normal bundle of the small diagonal $\Delta_S\subset M^S$. Then there is a homeomorphism
\begin{equation}\label{eqn:codim-1-strata}
M(S)^0\cong \{(e,x_1,\cdots,x_{|V(\gamma)|-|S|})\in E\times M^{|V(\gamma)|-|S|}:\pi(e),x_1,\cdots, x_{|V(\gamma)|-|S|}\text{\ not\ all\ equal}\}.
\end{equation}
Here $\pi$ denotes the projection $E\rightarrow \Delta_S$. 

\subsection{Quantization}\label{subsection:QME}
We are now ready to prove that the naive quantization of our Rozansky-Witten model \eqref{eqn:naive-quantization} satisfies the quantum master equation \eqref{eqn:QME}. First of all, a straightforward calculation shows that the quantum master equation \eqref{eqn:QME} is equivalent to the following:
\begin{equation}\label{eqn:QME-equivalent}
\begin{aligned}
&\frac{1}{\hbar}\left((d_M^\vee+\nabla)I_{naive}[L]+\fbracket{I_{naive}[L],I_{naive}[L]}_L+\hbar\Delta_LI_{naive}[L]\right)e^{I_{naive}[L]/\hbar}\\
&\qquad\qquad\qquad\qquad = -\left(\nabla^2\int_0^LQ^{GF}e^{-t[Q,Q^{GF}]}dt+\frac{\rho(R)}{\hbar}\right)e^{I_{naive}[L]/\hbar}.
\end{aligned}
\end{equation}
Recall that the operator $Q$ is given by $d_M^\vee+\nabla$, where $d_M^\vee$ is induced by the de Rham differential on  $M$ and acts on the Feynman weights $W_\gamma(\tilde{\mathbb{P}}_0^L, I_{cl})$ by
$$
d^\vee_M\left(W_\gamma(\tilde{\mathbb{P}}_0^L, I_{cl})\right)(\phi_1,\cdots, \phi_k)=\sum_{i=1}^kW_\gamma(\tilde{\mathbb{P}}_0^L, I_{cl})(\phi_1,\cdots, d_M\phi_i,\cdots, \phi_k).
$$
On the other hand, we have
\begin{lem}\label{lem:differential-of-propagator}
$d_M(\tilde{\mathbb{P}}_0^L)=-\mathbb{K}_L$.
\end{lem}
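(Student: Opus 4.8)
The plan is to compute $d_M(\tilde{\mathbb{P}}_0^L)$ directly from the definition $\mathbb{P}_0^L = \int_0^L (d_M^* \otimes 1)\mathbb{K}_t\, dt$ and the heat equation. First I would work away from the diagonal on $M\times M\setminus\Delta$, where $\tilde{\mathbb{P}}_0^L$ is genuinely the smooth form $\mathbb{P}_0^L$, and only at the end invoke smoothness of the extension to $M[2]$ (Proposition~\ref{prop:lift-propagator-boundary}) to conclude that the identity, once established on the open dense stratum, holds on all of $M[2]$. On $M\times M\setminus\Delta$, the full de Rham differential on $M\times M$ splits as $d_M = (d\otimes 1) + (1\otimes d)$; I would use that the heat kernel $\mathbb{K}_t$ is closed, $d_{M\times M}\mathbb{K}_t = 0$ (it represents the kernel of an operator commuting with $d$; more precisely, since $\mathbb{K}_t$ has total form degree $3 = \dim M$ and is symmetric, closedness under $d_{M\times M}$ follows from the fact that it is the Schwartz kernel of $e^{-tH}$ which is a chain map for the de Rham differential). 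Hence $(d\otimes 1)\mathbb{K}_t = -(1\otimes d)\mathbb{K}_t$.

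The key computation is then the standard ``$d d^* $'' manipulation at each scale. Writing $H = [d,d^*] = dd^* + d^* d$ acting on the first factor, one has on $\mathbb{K}_t$ that $(d\otimes 1)(d^*\otimes 1)\mathbb{K}_t + (d^*\otimes 1)(d\otimes 1)\mathbb{K}_t = (H\otimes 1)\mathbb{K}_t = -\tfrac{\partial}{\partial t}\mathbb{K}_t$, the last equality being the heat equation. Therefore
\begin{align*}
(d\otimes 1)\mathbb{P}_0^L &= \int_0^L (d\otimes 1)(d^*\otimes 1)\mathbb{K}_t\, dt \\
&= -\int_0^L \left(\frac{\partial}{\partial t}\mathbb{K}_t + (d^*\otimes 1)(d\otimes 1)\mathbb{K}_t\right) dt \\
&= -\mathbb{K}_L + \lim_{t\to 0}\mathbb{K}_t + \int_0^L (d^*\otimes 1)(1\otimes d)\mathbb{K}_t\, dt,
\end{align*}
where in the last step I used $(d\otimes 1)\mathbb{K}_t = -(1\otimes d)\mathbb{K}_t$. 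On $M\times M\setminus\Delta$ one has $\lim_{t\to 0}\mathbb{K}_t = 0$. For the remaining term, $(1\otimes d)$ anticommutes with $(d^*\otimes 1)$, so $\int_0^L (d^*\otimes 1)(1\otimes d)\mathbb{K}_t\, dt = -(1\otimes d)\int_0^L(d^*\otimes 1)\mathbb{K}_t\, dt = -(1\otimes d)\mathbb{P}_0^L$, i.e. $(1\otimes d)\mathbb{P}_0^L = \mathbb{K}_L - (d\otimes 1)\mathbb{P}_0^L - \lim_{t\to 0}\mathbb{K}_t$. Adding, $d_M \mathbb{P}_0^L = (d\otimes 1)\mathbb{P}_0^L + (1\otimes d)\mathbb{P}_0^L = -\mathbb{K}_L$ on $M\times M\setminus\Delta$ (the $\lim_{t\to0}\mathbb{K}_t$ contributions cancel). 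By continuity of all forms involved on $M[2]$, the identity $d_M\tilde{\mathbb{P}}_0^L = -\mathbb{K}_L$ holds on $M[2]$.

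The main obstacle is making the $t\to 0$ limits and the interchange of $d$ with $\int_0^L(\cdots)dt$ rigorous near the diagonal — precisely the point where $\mathbb{P}_0^L$ is singular as a form on $M\times M$ but smooth on the blow-up. I would handle this by first doing the computation on $M\times M\setminus\Delta$ with an $\epsilon$-cutoff, $d_M\mathbb{P}_\epsilon^L = -\mathbb{K}_L + \mathbb{K}_\epsilon$ (where no boundary subtlety arises since $\mathbb{P}_\epsilon^L$ and $\mathbb{K}_\epsilon$ are smooth on all of $M\times M$), then pulling back to $S(TM)\times(0,\epsilon')$ and using the asymptotic expansion from Lemma~\ref{lemma:leading-term-K-t}: $\mathbb{K}_\epsilon$ pulled back to the blow-up tends to $0$ as $\epsilon\to 0$ uniformly on compacta (its leading term $(K_t)_{[0]}$ is $O(t^{-3/2}e^{-r^2/4t})$ times bounded forms, which vanishes pointwise as $t\to 0$ for fixed $r>0$ and, after the blow-up rescaling, the relevant limit is controlled exactly as in the proof of Proposition~\ref{prop:lift-propagator-boundary}). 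Taking $\epsilon\to 0$ in the cutoff identity and invoking smoothness of $\tilde{\mathbb{P}}_0^L$ then gives $d_M\tilde{\mathbb{P}}_0^L = -\mathbb{K}_L$.
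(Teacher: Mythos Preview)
Your approach is the same as the paper's: reduce to $M\times M\setminus\Delta$, use the heat-equation identity $d_M\mathbb{P}_\epsilon^L=\mathbb{K}_\epsilon-\mathbb{K}_L$, note that $\mathbb{K}_0$ vanishes off the diagonal, and extend by continuity to $M[2]$. The paper does this in two lines; your second paragraph (the $\epsilon$-cutoff argument) is exactly that argument, just spelled out.

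One slip in the first paragraph: from your displayed computation together with the identification $\int_0^L(d^*\otimes 1)(1\otimes d)\mathbb{K}_t\,dt=-(1\otimes d)\mathbb{P}_0^L$ you get directly
\[
(d\otimes 1)\mathbb{P}_0^L+(1\otimes d)\mathbb{P}_0^L=-\mathbb{K}_L+\lim_{t\to 0}\mathbb{K}_t.
\]
Your ``i.e.'' line has the signs of $\mathbb{K}_L$ and $\lim_{t\to 0}\mathbb{K}_t$ inverted, and there is no cancellation of two limit terms --- there is only one such term, and it vanishes off $\Delta$. With that correction the argument is fine.
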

\begin{proof}
Since the propagator $\tilde{\mathbb{P}}_0^L$ is defined by lifting $\mathbb{P}_0^L$ smoothly from $M\times M\setminus\Delta$ to $M[2]$, we only need to show the equality on $M\times M\setminus\Delta$. But then this is clear since $d_M(\tilde{\mathbb{P}}_0^L)=\mathbb{K}_0-\mathbb{K}_L$, and $\mathbb{K}_0$ has support only on $\Delta$.
\end{proof}
\begin{thm}\label{theorem: QME}
For any fixed Riemannian metric $g$ on $M$, the naive quantization $\{I_{naive}[L]\}_{L>0}$ satisfies the quantum master equation \eqref{eqn:QME}.
\end{thm}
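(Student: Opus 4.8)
The plan is to verify the equivalent form \eqref{eqn:QME-equivalent} of the QME by a term-by-term analysis of how each operator acts on the generating series $e^{I_{naive}[L]/\hbar}$, organized graphically. The standard mechanism (as in Costello's treatment of Chern-Simons and in \cite{Li-Li}) is that the left-hand side of \eqref{eqn:QME-equivalent} is, up to the anomaly terms, a sum of boundary contributions: applying $d_M^\vee$ to a Feynman weight differentiates the propagator on each edge, and by Lemma~\ref{lem:differential-of-propagator} this replaces $\tilde{\mathbb P}_0^L$ on that edge by $-\mathbb K_L$; a propagator specialized to the heat kernel at scale $L$ either cuts an edge (producing a term that cancels against $\hbar\Delta_L I_{naive}[L]$, since $\Delta_L$ is precisely contraction with $\mathbb K_L$) or, when it connects two halves, reassembles into $\{I_{naive}[L],I_{naive}[L]\}_L$. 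So the first step is to write out this cancellation carefully: group the graphs appearing in $(d_M^\vee I_{naive}[L])e^{I_{naive}[L]/\hbar}$ according to which edge gets differentiated, and match them with the graphs in $\{I_{naive}[L],I_{naive}[L]\}_L\, e^{I_{naive}[L]/\hbar}$ and $\hbar\Delta_L I_{naive}[L]\,e^{I_{naive}[L]/\hbar}$.

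The genuinely new input, compared to pure Chern-Simons, is that the source differential $Q=d_M+\nabla$ contains the target-direction piece $\nabla$, and that $Q^2=\nabla^2=\{\rho(R),-\}\neq 0$. Two things must therefore be tracked. First, $\nabla$ acting on a Feynman weight hits the $\g_X$-valued coefficients, i.e. it acts through the connection on the Weyl bundle; because the classical interaction $I_{cl}$ is built from $I$ solving \eqref{eqn:equivalent-flatness-equation-D}, the combination $\nabla I_{cl}+\tfrac12\{I_{cl},I_{cl}\}+\rho(R)=0$ is exactly the classical master equation, so the "internal" (target) part of the QME reduces to the CME plus the correction $\rho(R)$, which is precisely the extra term on the right of \eqref{eqn:QME-equivalent}. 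Second, the operator $Q_L$ differs from $Q$ by $\nabla^2\int_0^L Q^{GF}e^{-tH}dt$; I would expand $Q_L+\hbar\Delta_L+\rho(R)/\hbar$ accordingly and show that the $\nabla^2\int_0^L(\cdots)$ piece is exactly what is needed to absorb the discrepancy coming from $Q^2\neq 0$ when $Q$ is distributed over the exponential. This is a purely algebraic bookkeeping step once the graphical cancellation above is in place.

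The crux — and the main obstacle — is that the graphical cancellation only works modulo boundary terms: integrating $d_M$ of a form over the compactified configuration space $M[V(\gamma)]$ produces, by Stokes, an integral over $\partial M[V(\gamma)]$. So the real content of the theorem is that all such boundary contributions vanish. Using the stratification recalled in \eqref{eqn:codim-1-strata}, the codimension-one boundary strata $M(S)^0$ correspond to subsets $S\subset V(\gamma)$ of points colliding. I would split into the two standard cases: $|S|=2$ (a single edge shrinking), where the relevant integrand on the boundary $\partial M[2]\cong S(TM)$ is the explicit $2$-form \eqref{eqn:extended-propagator-boundary-one} and one must show its fiber integral over $S^2$, against the vertex data, vanishes — this is where the antisymmetry under $\hat u\mapsto -\hat u$ (already exploited in the proof of Proposition~\ref{prop:lift-propagator-boundary}) and the symmetry argument of Lemma~\ref{lem:self-loop-vanishing} do the work, together with Lemma~\ref{lem:vanishing-asymp-heat} controlling which degrees can survive; and $|S|\geq 3$ (a "hidden face," several points colliding), where the vanishing follows from the dimension/degree count: by Lemma~\ref{lemma:leading-term-K-t} each collapsing propagator contributes only the leading-order form, and the total form-degree available on the fiber $S(N(\Delta_S))$ is too small to be integrated nontrivially — this is the heat-kernel analogue of the Axelrod–Singer vanishing of hidden faces, and is exactly where the weight/grading estimates of Lemma~\ref{lem:vanishing-asymp-heat} and the structure of the curvature term $R$ (which is $\delta$-closed, \eqref{eqn:delta-inverse-annihilates-R}) are essential. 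I expect the $|S|\geq 3$ hidden-face analysis to be the most delicate point, since it requires the precise degree bookkeeping for the lifted propagators on all faces simultaneously, not just the principal one.
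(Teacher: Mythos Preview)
Your overall architecture --- Stokes on $M[V(\gamma)]$, matching the $\mathbb K_L$ terms against $\{-,-\}_L$ and $\Delta_L$, and isolating the boundary contributions on codimension-one strata --- is exactly the paper's. But your treatment of the boundary strata has a genuine gap. The $|S|=2$ strata do \emph{not} all vanish. When two vertices connected by a single edge collide, the fiber integral over $S^2$ of the volume-form piece of \eqref{eqn:extended-propagator-boundary-one} equals $1$, so this stratum contributes the local functional $I_{m,n}(\alpha)=\langle l_m(\alpha,\ldots,l_n(\alpha,\ldots,\alpha),\ldots,\alpha),\alpha\rangle$. These $I_{m,n}$ (for $m,n>1$), together with the $\nabla$ action (giving $I_{1,n}$) and a set of leftover separating-edge terms involving the curving $\tilde l_0$ (giving $I_{0,m}$), are what assemble into the CME after being pushed through the RG flow $e^{\hbar\partial_{\tilde{\mathbb P}_0^L}}$. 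Your proposal tries to kill the $|S|=2$ boundary by antisymmetry and to extract the CME purely from the target operator $\nabla$; but the $\{I_{cl},I_{cl}\}$ piece of the CME has no other source at scale $L$ --- it must come from this boundary stratum. (There is a second $|S|=2$ case, two vertices joined by \emph{two} edges, which does vanish; the paper defers that to \cite{Bott-Cattaneo}.)

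For $|S|\geq 3$ your conclusion is right but the mechanism is not a degree shortfall: the form degree on the fiber can match exactly (this forces $|S|$ even), and one then needs Kontsevich's argument --- choose normal coordinates so the Christoffel symbols vanish at the collision point, reducing \eqref{eqn:extended-propagator-boundary-one} to the flat-space form plus a curvature term, and invoke \cite{Kontsevich}*{Lemma~2.1}, which finds a vertex at most bivalent to the non-curvature edges and uses a reflection symmetry there. The curvature piece allows at most one ``distinguished'' edge contributing a base $2$-form (since $\dim M=3$), and the paper checks Kontsevich's counting survives this. Lemma~\ref{lem:vanishing-asymp-heat} is used only to establish the smooth lift to $M[2]$ in Proposition~\ref{prop:lift-propagator-boundary}; it plays no role in the hidden-face vanishing.
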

\begin{proof}
By the Leibniz rule, we have
\begin{equation}\label{eqn:QME-stokes}
\begin{aligned}
&d^\vee_M\left(W_\gamma(\tilde{\mathbb{P}}_0^L, I_{cl})\right)(\phi_1,\cdots,\phi_k)\\
=\ &\sum_{i=1}^k W_\gamma(\tilde{\mathbb{P}}_0^L, I_{cl})(\phi_1,\cdots,d_M(\phi_i),\cdots,\phi_k)  \\
=\ &\int_{M[V(\gamma)]}d\left(\prod_{e\in E(\gamma)}\pi_e^*(\tilde{\mathbb{P}}_0^L)\prod_{i=1}^k\phi_i\right)-\int_{M[V(\gamma)]}\sum_{e_0\in E(\gamma)}d(\pi_{e_0}^*(\tilde{\mathbb{P}}_0^L))\prod_{e\in E(\gamma)\setminus e_0}\pi_e^*(\tilde{\mathbb{P}}_0^L)\prod_{i=1}^k\phi_i   \\
\overset{(1)}{=}\ &\int_{\partial M[V(\gamma)]}\prod_{e\in E(\gamma)}\pi_e^*(\tilde{\mathbb{P}}_0^L)\prod_{i=1}^k\phi_i-\int_{M[V(\gamma)]}\sum_{e_0\in E(\gamma)}\pi_{e_0}^*(\mathbb{K}_L)\prod_{e\in E(\gamma)\setminus e_0}\pi_e^*(\tilde{\mathbb{P}}_0^L)\prod_{i=1}^k\phi_i,
\end{aligned}
\end{equation}
where we have used Stokes' theorem and Lemma \ref{lem:differential-of-propagator} in the last equality (1).

The edge $e_0$ in the RHS of the above equation  could be separating or not, namely, deleting the edge $e_0$ would result in two connected graphs or one as shown in the following picture:
\begin{align*}
&\text{separating:}\hspace{27mm} \figbox{0.2}{separating}\\
&\text{non-separating:}\hspace{20mm} \figbox{0.2}{nonseparating}
\end{align*}
It is not difficult to see that the sum of all the terms corresponding to non-separating edges over all stable graphs $\gamma$ will cancel with the term $\hbar\Delta_LI_{naive}[L]$ in the quantum master equation, and the sum of all the other terms corresponding to separating edges will cancel with those terms in $\{I_{naive}[L],I_{naive}[L]\}_L$, except the following ones, containing a vertex labeled by $\tilde{l}_0$ with $k>1$ (since the terms $\tilde{l}_0$ is not involved in the RG flow by type reason):
\begin{equation}\label{eqn:separating-edges-not-canceled}
 \figbox{0.23}{separating-not-canceled}
\end{equation}
Let us now calculate the first term of the RHS of equation \eqref{eqn:QME-stokes}, i.e. an integral over the boundary components of the compactified configuration spaces.

Since the integrands are smooth forms, we can replace $\partial M[V(\gamma)]$ by the union of open codimension 1 strata  of $M[V]$. As explained in the previous subsection, such strata are in one-to-one correspondence with subsets $S\subset V$ with $|S|\geq 2$. We may assume that the vertices in $S$ are connected in the graph $\gamma$, since otherwise the integral vanishes by type reasons.

We first consider the cases where $|S|\geq 3$. There are then two graphs $\gamma',\gamma''$ associated to $\gamma$ and $S$: let $\gamma'$ denote a graph consisting of the following data:
\begin{enumerate}
 \item The vertices of $\gamma'$ are given by $V(\gamma')=S$;
 \item The internal edges consist of all edges of $\gamma$ that are only incident to vertices in $S$;
 \item The tails of $\gamma'$ are those half edges of $\gamma$ incident to $S$ but not part of the internal edges in $\gamma'$.
\end{enumerate}
And we define
$$
I_{\gamma'}:=W_{\gamma'}(\tilde{\mathbb{P}}_0^L,I_{cl}).
$$
We then define the other graph $\gamma''$ as the ``remaining part'' of $\gamma'$ in $\gamma$, as shown in the following picture: in the left picture, the yellow circles denote those vertices in $S$, and the subgraph in the dashed circle is $\gamma'$. In the right picture, we replace the subgraph $\gamma'$ by a single yellow vertex labeled by $I_{\gamma'}$, and the two Feynman weights are identified:
$$
\figbox{0.22}{subgraph-1}\hspace{8mm}=\hspace{5mm}\figbox{0.22}{subgraph-2}
$$

Let $M_\gamma(S)^0$ denote the codimension-$1$ stratum in the configuration space associated to the graph $\gamma$, and let $M_{\gamma'}(S)^0$ denote that associated to the graph $\gamma'$. Let $W_{\gamma,S}(\tilde{\mathbb{P}}_0^L, I_{cl})$ be the functional obtained by the Feynman integral over the boundary stratum corresponding to the subset $S$ in the graph $\gamma$. Then the following lemma follows easily from the explicit description of codimension $1$ open strata in equation \eqref{eqn:codim-1-strata}:
\begin{lem}\label{lem:boundary-strata-integral-subgraph}
We have
$$
W_{\gamma,S}(\tilde{\mathbb{P}}_0^L, I_{cl})=W_{\gamma''}\left(\tilde{\mathbb{P}}_0^L, I_{cl}, W_{\gamma',S}(\tilde{\mathbb{P}}_0^L,I_{cl})\right).
$$
\end{lem}
\begin{rmk}
This is equivalent to the computation in \cite{AS2} where a graph integral is divided into the ``regular'' and ``singular'' parts.
\end{rmk}
\begin{notn}
Let $\gamma$ be as before. We consider the boundary component of $M[V(\gamma)]$ corresponding to $S=V(\gamma)$, i.e. when all the points collide. We call the integral on this boundary component $primitive$.
\end{notn}
There are the following observations about the $primitive$ boundary integrals:
\begin{enumerate}
 \item The $primitive$ boundary integrals are independent of $L$, and give rise to local functionals on $\E$;
 \item Lemma \ref{lem:boundary-strata-integral-subgraph} is saying that the sum of all boundary integrals are exactly the RG flow of the primitive boundary integrals.
\end{enumerate}
\begin{rmk}
These primitive boundary integrals are, roughly speaking, corresponding to the ``scale-0 obstructions''  to quantizations, which are local functionals on $\E$.
\end{rmk}

Then we have the following
\begin{lem}\label{lem:Kontsevich-vanishing}
Let $S$ be a subset of $V(\gamma)$ with $|S|\geq 3$, then for any inputs $\phi_1,\cdots,\phi_k$, we have the vanishing of the following boundary integral:
\begin{equation}\label{eqn:QME-integral-boundary}
\int_{M_\gamma(S)^0}\prod_{e\in E(\gamma)}\pi_e^*(\tilde{\mathbb{P}}_0^L)\prod_{i=1}^k\phi_i=0.
\end{equation}
\end{lem}
\begin{proof}
By Lemma \ref{lem:boundary-strata-integral-subgraph}, we can assume that $S=V(\gamma)$.  Let $T_{z}M$ denote the tangent space at $z\in M$, and let $N=|S|$. The codimension $1$ stratum corresponding to $S=V(\gamma)$ is a fiber bundle over the small diagonal in $M^{V(\gamma)}$, and let us denote by $\pi^{-1}(z)$ the fiber over a point $z\in M$. Although the inputs $\phi_i$'s are put on separate copies of $M$ corresponding to the vertices they are labeling, when restricted to $M_\gamma(S)^0$, their product becomes a differential form on a single copy of $M$, i.e. the small diagonal.

Thus, to show that the integral \eqref{eqn:QME-integral-boundary} vanishes, it suffices to show that the push-forward of the forms $\prod_{e\in E(\gamma)}\pi_e^*(\tilde{\mathbb{P}}_0^L)$ vanish. We fix a local trivialization around $z\in M$ and look at the following integral along the fiber over $z$:
\begin{equation}\label{eqn:integral-boundary-fiber}
\int_{\pi^{-1}(z)}\prod_{e\in E(\gamma)}\pi_e^*(\tilde{\mathbb{P}}_0^L).
\end{equation}
Recall that the extended propagator $\tilde{\mathbb{P}}_0^L$ on the boundary of $M[2]$ is given in equation \eqref{eqn:extended-propagator-boundary-one}, which contains the Christoffel symbols of the Levi-Civita connection on $M$. By the standard trick of Riemannian geometry,  we can choose a local coordinate system such that the Christoffel symbols vanish at $z$. Then the proof of the claim resembles Kontsevich's original argument \cite{Kontsevich}, whose details we reproduce here for the readers' convenience.

Since we are only considering the integral over the fiber of $z\in M$ and we have chosen the local coordinates so that the Christoffel symbols vanish at $z$, the restriction of the extended propagator $\tilde{P}_0^L$ on the fiber $\pi^{-1}(z)$ is given by
\begin{equation}\label{eqn:propagator-boundary}
(4\pi)^{-\frac{3}{2}}\det(g(z))^{1/2}\epsilon_{ijk}\left(\sqrt{\pi}\hat{u}^id\hat{u}^jd\hat{u}^k+\sqrt{\pi}\hat{u}^k\Omega_l^ig^{lj}(z)\right).
\end{equation}
The fiber $\pi^{-1}(z)$ is of dimension $3N-4$ and \eqref{eqn:propagator-boundary} is a $2$-form. It is clear that equation \eqref{eqn:propagator-boundary} can either contribute the curvature forms on the base $M$, or a $2$-form on the fiber $\pi^{-1}(z)$. Hence the integral can be nontrivial only when $N=2k$ is even. There can be at most one edge $e$ such that  $\pi_e^*(\tilde{\mathbb{P}}_0^L)$ contributes a curvature form on $M$ since $M$ is $3$-dimensional, and we will call $e$ the distinguished edge.

We first show the vanishing of the integral \eqref{eqn:integral-boundary-fiber} for $k\geq 2$. If there is no distinguished edge, then the integral \eqref{eqn:integral-boundary-fiber} vanishes by \cite{Kontsevich}*{Lemma 2.1}. Otherwise, we need a slight modification of Kontsevich's argument. We note that the number of internal edges must be $3k-3$, one of which is the distinguished edge and others contribute the following terms:
$$
\frac{1}{4\pi}\det(g(z))^{1/2}\epsilon_{ijk}\hat{u}^id\hat{u}^jd\hat{u}^k
$$
in the integral \eqref{eqn:integral-boundary-fiber}. Let us first consider the constraint from the non-distinguished edges. By the type reason, the integrand vanishes if at least one vertex in the graph is less than bivalent to such edges. Thus in the worst cases, there exist $4$ vertices which are bivalent to these edges. Even if we add in the distinguished edge, there still remain two such vertices in the worst situation. (The following graph shows the case when $k=2$: the green edge denote the distinguished one and the yellow vertices are bivalent.)
$$
\figbox{0.2}{vanishing}
$$
Then the argument in \cite[Lemma 2.1]{Kontsevich} still works. This completes the proof of the lemma.
\end{proof}

Hence it remains to deal with the cases when $|S|=2$. There are two possibilities:
\begin{equation}\label{eqn:boundary-strata-two-vertex}
\figbox{0.22}{two-vertices-1}\hspace{10mm}\figbox{0.22}{two-vertices-2}
\end{equation}
For the right picture in equation (\ref{eqn:boundary-strata-two-vertex}), the vanishing of the corresponding functionals can be verified by direct computation. For more details, we refer to \cite{Bott-Cattaneo}.

For the left picture in equation (\ref{eqn:boundary-strata-two-vertex}), we decompose it as the sum of the following functionals on $\E$, which we denote by $I_{m,n}$:
$$
I_{m,n}(\alpha):=\langle l_m(\alpha,\cdots,l_n(\alpha,\cdots,\alpha),\cdots,\alpha), \alpha\rangle, \qquad \alpha\in\E, m,n\geq 1.
$$
Since the QME is independent of the scale, we will now finish the computation for $L=\infty$.
\begin{lem}
Let $\gamma_{m,n}$ be the following graph with two vertices, such that $m,n>1$:
\begin{equation}\label{eqn:boundary-strata-I-mn}
\figbox{0.22}{boundary-strata-classical-master-equation}
\end{equation}
The functional on $\E$ given by the integral on its (unique) codimension-$1$ stratum is the same as the local functional $I_{m,n}$.
\end{lem}
\begin{proof}
We compute the analytic and combinatorial parts of the functional in the lemma respectively. For the analytic part, let $\phi_1,\cdots,\phi_{m+n}$ denote the inputs on the tails. The integral on the boundary stratum of the graph $\gamma_{m,n}$ is
$$
\int_{\partial M[2]}\pi^*(\phi_1\cdots\phi_{m+n})\tilde{P}_0^\infty,
$$
where $\pi: \partial M[2]\rightarrow M$ is the natural projection. For the integration of $\tilde{P}_0^\infty$ over the fibers , the only term that will contribute is $\frac{1}{4\pi}\det(g(z))^{1/2}\epsilon_{ijk}\hat{u}^id\hat{u}^jd\hat{u}^k$, and the integral on the fiber over $z$ is:
\begin{align*}
\int_{\pi^{-1}(z)}\frac{1}{4\pi}\det(g(z))^{1/2}\epsilon_{ijk}\hat{u}^id\hat{u}^jd\hat{u}^k=1.
\end{align*}
Thus we have
$$
\int_{\partial M[2]}\pi^*(\phi_1\cdots\phi_{m+n})\tilde{P}_0^\infty=\int_M \phi_1\cdots\phi_{m+n}.
$$
For the combinatorial part, it is not difficult to see that it is the same as that of $I_{m,n}$.

\end{proof}
Thus we have shown that
\begin{equation}\label{eqn:d_M-QME}
\begin{aligned}
d_M^\vee(e^{I_{naive}[\infty]/\hbar})&=\frac{1}{\hbar}\left(d_M^\vee(I_{naive}[\infty])\right)e^{I_{naive}[\infty]/\hbar}\\
&=\sum_{m,n>1}e^{\hbar\partial_{\tilde{\mathbb{P}}_0^\infty}}(\frac{I_{m,n}}{\hbar}\cdot e^{I_{cl}/\hbar}).
\end{aligned}
\end{equation}

On the other hand, since $\nabla$ is compatible with the symplectic form $\omega$. We obtain
\begin{equation}\label{eqn:l_1_QME}
\begin{aligned}
\nabla(e^{I_{naive}[\infty]/\hbar})&=\nabla\left(e^{\hbar\partial_{\tilde{\mathbb{P}}_0^\infty}}e^{I_{cl}/\hbar}\right)\\\
&=e^{\hbar\partial_{\tilde{\mathbb{P}}_0^\infty}}\left(\nabla e^{I_{cl}/\hbar}\right)\\
&=e^{\hbar\partial_{\tilde{\mathbb{P}}_0^\infty}}\left((\nabla \frac{I_{cl}}{\hbar}) e^{I_{cl}/\hbar}\right)\\
&=e^{\hbar\partial_{\tilde{\mathbb{P}}_0^\infty}}\left(\sum_{n\not=1}\frac{I_{1,n}}{\hbar} e^{I_{cl}/\hbar}\right).
\end{aligned}
\end{equation}
Thus we have shown that
$$
\frac{1}{\hbar}\left((d_M^\vee+\nabla)I_{naive}[\infty]+\fbracket{I_{naive}[\infty],I_{naive}[\infty]}_\infty+\hbar\Delta_\infty I_{naive}[\infty]\right)e^{I_{naive}[\infty]/\hbar}
$$
is given by the sum of the following terms:
\begin{enumerate}
 \item The last line of equation \eqref{eqn:d_M-QME}
 \item The last line of equation \eqref{eqn:l_1_QME}
 \item $\sum_{m>1}e^{\hbar\partial_{\tilde{\mathbb{P}}_0^\infty}}(\frac{I_{0,m}}{\hbar}\cdot e^{I_{cl}/\hbar})$. These are the terms in equation \eqref{eqn:separating-edges-not-canceled}.
\end{enumerate}
The classical master equation \eqref{eqn:classical-master-equation} says that the sum of
the above terms is $-e^{\hbar\partial_{\tilde{\mathbb{P}}_0^\infty}}\left(\frac{1}{\hbar}\rho(R) e^{I_{cl}/\hbar}\right)$. To conclude, we have
$$
\frac{1}{\hbar}\left((d_M^\vee+\nabla)I_{naive}[\infty]+\fbracket{I_{naive}[\infty],I_{naive}[\infty]}_\infty+\hbar\Delta_\infty I_{naive}[\infty]\right)e^{I_{naive}[\infty]/\hbar}=-e^{\hbar\partial_{\tilde{\mathbb{P}}_0^\infty}}\left(\frac{1}{\hbar}\rho(R) e^{I_{cl}/\hbar}\right).
$$
The following lemma then finishes the proof of the theorem.
\end{proof}
\begin{lem}
$ \left(\nabla^2\int_0^LQ^{GF}e^{-t[Q,Q^{GF}]}dt+\frac{\rho(R)}{\hbar}\right)e^{I_{naive}[L]/\hbar}=e^{\hbar\partial_{\tilde{\mathbb{P}}_0^L}}\left(\frac{1}{\hbar}\rho(R) e^{I_{cl}/\hbar}\right)$.
\end{lem}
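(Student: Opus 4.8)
The plan is to conjugate the multiplication operator $m_{\rho(R)/\hbar}$ (multiplication by $\rho(R)/\hbar$ on $\OO(\E)$) past the renormalization group flow operator $e^{\hbar\partial_{\tilde{\mathbb{P}}_0^L}}$ and then invoke the exponential form of the RGE, $e^{I_{naive}[L]/\hbar}=e^{\hbar\partial_{\tilde{\mathbb{P}}_0^L}}e^{I_{cl}/\hbar}$. First I would fix the reading of the two operators on the left-hand side: $\nabla^2=\fbracket{\rho(R),-}$ is the first-order derivation of $\OO(\E)$ given by the Hamiltonian vector field of the quadratic functional $\rho(R)$ with respect to $\langle-,-\rangle$ (this is exactly the content of the identity $Q^2=\nabla^2=\fbracket{\rho(R),-}$), and $\mathcal{P}:=\int_0^LQ^{GF}e^{-t[Q,Q^{GF}]}dt$ is the operator on $\E$ whose integral kernel with respect to $\langle-,-\rangle$ is the propagator $\mathbb{P}_0^L$, equivalently its smooth lift $\tilde{\mathbb{P}}_0^L$; so the left-hand side equals $\bigl(\nabla^2\circ\mathcal{P}+m_{\rho(R)/\hbar}\bigr)e^{I_{naive}[L]/\hbar}$.

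The heart of the argument is the commutator computation
$$
[\hbar\partial_{\tilde{\mathbb{P}}_0^L},\,m_{\rho(R)}]\;=\;\hbar\, m_{\partial_{\tilde{\mathbb{P}}_0^L}\rho(R)}\;+\;\hbar\,(\nabla^2\circ\mathcal{P}),
$$
which is valid because $\rho(R)$ is quadratic: the constant-coefficient second-order operator $\partial_{\tilde{\mathbb{P}}_0^L}$ acting on a product $\rho(R)\cdot(-)$ either contracts both legs of $\rho(R)$ against the propagator, producing the scalar $\partial_{\tilde{\mathbb{P}}_0^L}\rho(R)$, or contracts exactly one leg, producing the linear vector field obtained by composing $\nabla^2$ with the propagator, namely $\nabla^2\circ\mathcal{P}$ — here one uses the analytic--combinatorial factorization of $\tilde{\mathbb{P}}_0^L$ into the heat propagator on $\A(M)$ and the Poisson kernel of $\omega$. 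The first term vanishes: its combinatorial part is the contraction of the Poisson kernel of $\omega$, an antisymmetric bivector, against the $\Sym^2(T_X^\vee)$-valued tensor $R$, hence it is the pairing of an antisymmetric with a symmetric tensor and is zero; this is precisely the mechanism of Lemma~\ref{lem:self-loop-vanishing}, now applied to the bivalent $\rho(R)$-vertex.

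Next I would check that $\nabla^2\circ\mathcal{P}$ commutes with $\hbar\partial_{\tilde{\mathbb{P}}_0^L}$: since $\nabla^2\circ\mathcal{P}$ is a linear vector field, $[\hbar\partial_{\tilde{\mathbb{P}}_0^L},\nabla^2\circ\mathcal{P}]=\pm\hbar\,\partial_{\mathcal{L}_{\nabla^2\mathcal{P}}\tilde{\mathbb{P}}_0^L}$, and $\mathcal{L}_{\nabla^2\mathcal{P}}\tilde{\mathbb{P}}_0^L=0$ because applying $\nabla^2\circ\mathcal{P}$ to one tensor leg of $\tilde{\mathbb{P}}_0^L$ only reprocesses the analytic factor through $\mathcal{P}$ while it annihilates the combinatorial factor, the $\nabla$-parallel Poisson kernel of $\omega$ (here we use compatibility of $\nabla$ with $\omega$, as in \eqref{eqn:l_1_QME}). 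Consequently all higher commutators vanish and
$$
e^{\hbar\partial_{\tilde{\mathbb{P}}_0^L}}\circ m_{\rho(R)/\hbar}\circ e^{-\hbar\partial_{\tilde{\mathbb{P}}_0^L}}\;=\;m_{\rho(R)/\hbar}+\nabla^2\circ\mathcal{P};
$$
evaluating both sides on $e^{I_{cl}/\hbar}$ and using $e^{\hbar\partial_{\tilde{\mathbb{P}}_0^L}}e^{I_{cl}/\hbar}=e^{I_{naive}[L]/\hbar}$ yields
$$
e^{\hbar\partial_{\tilde{\mathbb{P}}_0^L}}\Bigl(\tfrac{1}{\hbar}\rho(R)\,e^{I_{cl}/\hbar}\Bigr)=\Bigl(\nabla^2\int_0^LQ^{GF}e^{-t[Q,Q^{GF}]}dt+\tfrac{\rho(R)}{\hbar}\Bigr)e^{I_{naive}[L]/\hbar},
$$
which is the assertion.

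I expect the main obstacle to be the bookkeeping in the two vanishing claims rather than any conceptual difficulty: identifying the single-contraction term precisely with the operator $\nabla^2\mathcal{P}$ occurring in $Q_L$ requires tracking the analytic and combinatorial factors of $\tilde{\mathbb{P}}_0^L$ separately through the contraction and matching normalizations and signs, and the vanishing of the self-contraction of $\rho(R)$, although by the identical antisymmetry argument, must be stated separately since $\rho(R)$ is only bivalent and so is not literally covered by Lemma~\ref{lem:self-loop-vanishing}.
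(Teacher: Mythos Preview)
Your argument is correct and reaches the same conjugation identity the paper uses, but by a different route. The paper does not compute commutators: it invokes the general compatibility relation
\[
\Bigl(Q_L+\hbar\Delta_L+\tfrac{\rho(R)}{\hbar}\Bigr)e^{\hbar\partial_{\tilde{\mathbb{P}}_\epsilon^L}}
=e^{\hbar\partial_{\tilde{\mathbb{P}}_\epsilon^L}}\Bigl(Q_\epsilon+\hbar\Delta_\epsilon+\tfrac{\rho(R)}{\hbar}\Bigr)
\]
(cited from \cite{Li-Li}), strips off the pieces that commute with the RG flow on their own, applies the result to $e^{I_{cl}/\hbar}$, and then lets $\epsilon\to 0$; the residual term $\nabla^2\!\int_0^\epsilon Q^{GF}e^{-t[Q,Q^{GF}]}dt$ acting on $e^{I_{cl}/\hbar}$ is disposed of by a short graphical argument in the limit. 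Your approach instead proves the conjugation identity from scratch by computing $[\hbar\partial_{\tilde{\mathbb{P}}_0^L},m_{\rho(R)}]$ and then $[\hbar\partial_{\tilde{\mathbb{P}}_0^L},\nabla^2\mathcal{P}]$ directly, so it is self-contained (no external citation, no $\epsilon$-limit) at the cost of the explicit bookkeeping you flag at the end.

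One imprecision worth tightening: in your justification of $\mathcal{L}_{\nabla^2\mathcal{P}}\tilde{\mathbb{P}}_0^L=0$ you write that applying $\nabla^2\circ\mathcal{P}$ to \emph{one} tensor leg annihilates the combinatorial factor $\omega^{-1}$. That is not true for a single leg: $(\nabla^2\otimes 1)\omega^{-1}$ is generally nonzero. What vanishes is the \emph{symmetrized} action $(\nabla^2\otimes 1+1\otimes\nabla^2)\omega^{-1}$, because the curvature of a symplectic connection lies in $\A_X^2\otimes\mathfrak{sp}(T_X)$ and $\mathfrak{sp}(T_X)$ kills $\omega^{-1}$. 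Since $\mathcal{L}_{\nabla^2\mathcal{P}}$ on a symmetric $2$-tensor is precisely this symmetrized action (and the analytic factor is symmetric under the swap, so the $\mathcal{P}$-processed analytic parts on the two legs agree), your conclusion stands; only the sentence of justification needs to be rephrased.
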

\begin{proof}
There is the following compatibility between the operator $Q_L+\hbar\Delta_L+\frac{\rho(R)}{\hbar}$ and the RG flow: (see Lemma 3.13 in \cite{Li-Li}):
\begin{equation}\label{eqn:compatibility-RG-flow-quantum-differential}
\left(Q_L+\hbar\Delta_L+\frac{\rho(R)}{\hbar}\right)e^{\hbar\partial_{\tilde{\mathbb{P}}_\epsilon^L}}=e^{\hbar\partial_{\tilde{\mathbb{P}}_\epsilon^L}}\left(Q_\epsilon+\hbar\Delta_\epsilon+\frac{\rho(R)}{\hbar}\right),
\end{equation}
which is clearly equivalent to
$$
\left(\nabla+\nabla^2\int_0^L Q^{GF}e^{-t[Q,Q^{GF}]}dt+\frac{\rho(R)}{\hbar}\right)e^{\hbar\partial_{\tilde{\mathbb{P}}_\epsilon^L}}=e^{\hbar\partial_{\tilde{\mathbb{P}}_\epsilon^L}}\left(\nabla+\nabla^2\int_0^\epsilon Q^{GF}e^{-t[Q,Q^{GF}]}dt+\frac{\rho(R)}{\hbar}\right).
$$
Furthermore, the compatibility between $\nabla$ and the RG flow implies the following:
$$
\left(\nabla^2\int_0^L Q^{GF}e^{-t[Q,Q^{GF}]}dt+\frac{\rho(R)}{\hbar}\right)e^{\hbar\partial_{\tilde{\mathbb{P}}_\epsilon^L}}=e^{\hbar\partial_{\tilde{\mathbb{P}}_\epsilon^L}}\left(\nabla^2\int_0^\epsilon Q^{GF}e^{-t[Q,Q^{GF}]}dt+\frac{\rho(R)}{\hbar}\right).
$$
Thus we have
\begin{align*}
 &\left(\nabla^2\int_0^LQ^{GF}e^{-t[Q,Q^{GF}]}dt+\frac{\rho(R)}{\hbar}\right)e^{I_{naive}[L]/\hbar}\\
=&\lim_{\epsilon\rightarrow 0}\left(\nabla^2\int_0^LQ^{GF}e^{-t[Q,Q^{GF}]}dt+\frac{\rho(R)}{\hbar}\right)e^{\hbar\partial_{\tilde{\mathbb{P}}_\epsilon^L}}e^{I_{cl}/\hbar}\\
=&\lim_{\epsilon\rightarrow 0}e^{\hbar\partial_{\tilde{\mathbb{P}}_\epsilon^L}}\left(\nabla^2\int_0^\epsilon Q^{GF}e^{-t[Q,Q^{GF}]}dt+\frac{\rho(R)}{\hbar}\right)e^{I_{cl}/\hbar}\\
=&\lim_{\epsilon\rightarrow 0}e^{\hbar\partial_{\tilde{\mathbb{P}}_\epsilon^L}}\left(\figbox{0.26}{curving-1}\hspace{2mm}+\hspace{2mm}\figbox{0.26}{curving-2}\right).
\end{align*}
The term of the left picture vanishes as $\epsilon\rightarrow 0$, and the statement of the lemma follows.
\end{proof}
\begin{notn}
Since the naive quantization
$$
\sum_{\gamma}\frac{\hbar^{g(\gamma)}}{|\text{Aut}(\gamma)|}W(\tilde{\mathbb{P}}_0^L,I_{cl})
$$
satisfies the QME, we will denote it by $I[L]$ instead of $I_{naive}[L]$.
\end{notn}

\subsection{Quantum master equation with varying Riemannian metrics}\label{section: extended-QME}
In this subsection, we consider the case when there is a family of Riemannian metrics $\{g_t\}$ on $M$ parametrized by the interval $I:=[0,1]$ and construct a solution to the QME in this situation. In this case, the propagator is a $2$-form on $M\times [0,1]$ which  depends on the metrics. Similar to Proposition \ref{prop:lift-propagator-boundary}, we have the following description of the propagators:
\begin{prop}
The following limit
$$
\lim_{\epsilon\rightarrow 0}P_\epsilon^L
$$
on $(M\times M\setminus\Delta)\times [0,1]$ can be lifted to a smooth $2$-form on $M[2]\times [0,1]$, which we also denote by $\tilde{P}_0^L$. In particular, the restriction of $\tilde{P}_0^L$ to  $\partial(M[2])\times [0,1]$ is given by
\begin{equation}\label{eqn:extended-propagator-boundary}
(4\pi)^{-\frac{3}{2}}\det(g_t(z))^{1/2}\epsilon_{ijk}\left(\sqrt{\pi}\hat{u}^id_{vert}\hat{u}^jd_{vert}\hat{u}^k+\sqrt{\pi}\hat{u}^k\tilde{\Omega}_l^ig_t^{lj}(z)\right)
\end{equation}
\end{prop}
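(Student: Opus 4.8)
The plan is to run the argument of Proposition~\ref{prop:lift-propagator-boundary} in families over the parameter interval $I=[0,1]$, treating the metric parameter (which I will write as $s\in I$, to distinguish it from the heat-kernel time $t$) on the same footing as the base point $z\in M$. So the whole scheme is: (i) upgrade the heat-kernel asymptotics of Section~\ref{subsection: naive quantization} to a statement with $s$ as an inert smooth parameter; (ii) decompose the family propagator into a ``horizontal'' $2$-form and a $ds$-part, and lift each across the blow-up; (iii) kill the ambiguous smooth boundary term by the same antisymmetry argument as before.

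First I would record the parametrized version of the heat-kernel analysis. Since $s\mapsto g_s$ is a smooth family, the Levi-Civita connection, its Christoffel symbols $\Gamma^{(s)}$ and curvature $\Omega^{(s)}$, and the inverse metric $g^{(s)}$ all depend smoothly on $s$; consequently the coefficients $\psi_i(t^{1/2},u,z;s)$ in the asymptotic expansion~\eqref{eqn:asymptotic-expansion-no-forms} of the heat kernel $K_t^{(s)}$ are smooth in $s$, and the vanishing statement of Lemma~\ref{lem:vanishing-asymp-heat} holds with $s$ carried along as a parameter (its inductive proof uses only algebraic relations among the $\psi_i$ and the symbol of the Laplacian, relations which persist under $\partial_s$). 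Hence Lemma~\ref{lemma:leading-term-K-t} has a family version: $K_t^{(s)}$ and $(d^*_{g_s}\otimes 1)K_t^{(s)}$ have the stated leading terms with $g,\Gamma,\Omega$ replaced by $g_s,\Gamma^{(s)},\Omega^{(s)}$, smoothly in $s$.

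Next I would assemble the family into a single form. The propagator $\lim_{\epsilon\to 0}P_\epsilon^L$ on $(M\times M\setminus\Delta)\times I$ decomposes as $A+B\wedge ds$, where for each $s$, $A(\cdot,s)$ is a $2$-form and $B(\cdot,s)$ a $1$-form on $M\times M\setminus\Delta$. Slice by slice, $A(\cdot,s)$ is exactly the propagator of Proposition~\ref{prop:lift-propagator-boundary} for the metric $g_s$; by that proposition (with the family version of Lemma~\ref{lemma:leading-term-K-t}) it extends smoothly across the exceptional divisor of $M[2]$, and the extension is smooth in $s$ because all the data feeding the asymptotics are. The component $B$ comes from $\partial_s$ of the integrand in $\int_\epsilon^L (d^*_{g_s}\otimes 1)\mathbb{K}_t^{(s)}\,dt$; differentiating the asymptotic expansion in $s$ and invoking the parametrized analogue of Lemma~\ref{lem:vanishing-asymp-heat} to control the degrees of the $s$-derivatives of the $\psi_i$, one sees that $B$ likewise has a convergent $r\to 0$ limit and lifts smoothly to $M[2]\times I$. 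Packaging $\Gamma^{(s)}$ and $\Omega^{(s)}$ together with their $s$-derivatives as the Christoffel symbols and the curvature $\tilde\Omega$ of the induced connection on the pullback of $TM$ to $M\times I$, the boundary restriction of $A+B\wedge ds$ on $\partial(M[2])\times I$ is precisely~\eqref{eqn:extended-propagator-boundary}.

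Finally, the removal of the ambiguous smooth boundary term is verbatim as before: $P_\epsilon^L(\cdot,s)$ is antisymmetric under the swap of the two factors of $M$ for every $s$ — and so is its $ds$-component — while the right-hand side of~\eqref{eqn:extended-propagator-boundary} is antisymmetric under the antipodal map $\hat u\mapsto-\hat u$ on the sphere bundle; hence the leftover $\pi^*(\bar\rho|_\Delta)$ vanishes. The \textbf{main obstacle} is the bookkeeping for the $ds$-component $B$: one must verify that applying $\partial_s$ does not worsen the order of the singularity along the diagonal, i.e.\ that the parametrized version of Lemma~\ref{lem:vanishing-asymp-heat} still forces the relevant low-degree terms to vanish, so that $B$ — not merely $A$ — lifts smoothly to the compactification. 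Everything else is a parameter-dependent repetition of arguments already in place.
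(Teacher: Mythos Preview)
Your proposal is correct and in line with the paper's approach: the paper states this proposition without proof, prefaced only by the remark that it is ``similar to Proposition~\ref{prop:lift-propagator-boundary}'', and you carry out exactly the parametrized version of that argument. Your extra care with the $ds$-component $B$---and your identification of $\tilde{\Omega}$ as the curvature of the family Levi-Civita connection on the pullback of $TM$ to $M\times I$---supplies precisely what the paper leaves implicit; the obstacle you flag is genuine but mild, since the coefficients $\psi_i$ depend smoothly on the metric parameter and the grading argument behind Lemma~\ref{lem:vanishing-asymp-heat} is insensitive to that dependence.
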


Let us fix a trivialization of the tangent bundle $TM$, i.e. a framing of the 3-dimensional manifold $M$. Let $\theta_t$ denote the Levi-Civita connection $1$-forms on $M$, with respect to the chosen trivialization,  corresponding to the metric $g_t$. According to Proposition \ref{prop:weyl-bundle-quasi-isomorphic-Dolbeault},  for any cohomology class
$$
\alpha\in H^*(X,\OO_X),
$$
there exists a corresponding flat section $\tilde{\alpha}$ of the Weyl bundle $\mathcal{W}$ under the differential $D$. By extending $\tilde{\alpha}$ linearly over $\A_M$ and integration over $M$, we obtain the following multi-linear functional  on $\A_M^0\otimes\g_X$
$$
CS_{\tilde{\alpha}}:=\int_M\Tr(\theta_t\wedge d\theta_t+\frac{2}{3}\theta_t^3)\cdot\tilde{\alpha}.
$$

We have the following theorem:
\begin{thm}\label{theorem: QME2}
Let $g_t$ be a family of Riemannian metrics on $M$ parametrized by $[0,1]$. There exist cohomology classes $\alpha_k\in H^*(X,\OO_X)$ for $k\geq 2$, such that the effective functionals
$$
I[L]:=\sum_{\gamma}\frac{\hbar^{g(\gamma)}}{|Aut(\gamma)|}W_\gamma\left(\tilde{\mathbb{P}}_0^L, I_{cl}+\sum_{k\geq 2}\hbar^kCS_{\tilde{\alpha}_k}\right)
$$
satisfy the following quantum master equation
\begin{equation}\label{eqn:extended-QME}
 Q_LI[L]+\fbracket{I[L],I[L]}_L+\hbar\Delta_LI[L]=0.
\end{equation}
Here the operator $Q_L$ includes the de Rham differential on $\A^*([0,1])$.
\end{thm}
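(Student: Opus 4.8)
The plan is to view the $t$-dependent quantum master equation as a family version of Theorem~\ref{theorem: QME}. Since the naive quantization $\{I_{naive}[L]\}$ satisfies the scale-$L$ QME for each fixed metric $g_t$, the failure of \eqref{eqn:extended-QME} for the naive family quantization $I_0[L] := \sum_\gamma \frac{\hbar^{g(\gamma)}}{|\text{Aut}(\gamma)|} W_\gamma(\tilde{\mathbb{P}}_0^L, I_{cl})$ (built from the propagator of the preceding proposition, now a form on $M[2]\times[0,1]$) can have no $\A^0([0,1])$-component; writing $Q_L$ for the scale-$L$ quantum differential, which now also contains the de Rham differential $dt\,\partial_t$ on $[0,1]$, the anomaly $\Theta[L] := Q_L I_0[L] + \fbracket{I_0[L],I_0[L]}_L + \hbar\Delta_L I_0[L]$ is therefore valued in $\A^1([0,1])\otimes\OO(\E)$. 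By the extended version of the RG-flow compatibility \eqref{eqn:compatibility-RG-flow-quantum-differential} (with the $dt\,\partial_t$ term included), $\Theta[L]$ flows correctly under the RG flow, so $\Theta := \lim_{L\to 0}\Theta[L]$ is a well-defined local functional (and is $O(\hbar^2)$, as explained below). It then suffices to produce a $t$-dependent local counterterm $J = \sum_{k\ge 2}\hbar^k CS_{\tilde\alpha_k}$ whose flowed $Q_L$-variation cancels $\Theta$; then the corrected $I[L] := \sum_\gamma \frac{\hbar^{g(\gamma)}}{|\text{Aut}(\gamma)|}W_\gamma(\tilde{\mathbb{P}}_0^L, I_{cl}+J)$ automatically satisfies the RGE, so \eqref{eqn:extended-QME} follows.

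The next step is to compute $\Theta$ by rerunning the Stokes-theorem and Feynman-graph argument of Theorem~\ref{theorem: QME} over $M[V(\gamma)]\times[0,1]$. The only new ingredients are that $d_{M\times[0,1]}(\tilde{\mathbb{P}}_0^L) = -\mathbb{K}_L + dt\,\partial_t\tilde{\mathbb{P}}_0^L$ away from the diagonal (the extended form of Lemma~\ref{lem:differential-of-propagator}), and that the boundary propagator \eqref{eqn:extended-propagator-boundary} now contains the curvature $\tilde\Omega_t = \Omega_t + \dot\theta_t\wedge dt$ with $\dot\theta_t := \partial_t\theta_t$. Every contribution not containing $dt$ cancels fiberwise over $[0,1]$ exactly as in Theorem~\ref{theorem: QME} (using the classical master equation \eqref{eqn:classical-master-equation}, the BV bracket $\fbracket{-,-}_L$, and $\hbar\Delta_L$). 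For the $dt$-terms, the Kontsevich-type vanishing of Lemma~\ref{lem:Kontsevich-vanishing} together with Lemma~\ref{lem:boundary-strata-integral-subgraph} kills every boundary stratum with $|S|\ge 3$, leaving only two-vertex strata; performing the fiber integral over $S(TM)$ as in the proofs above, $\Theta$ reduces to a sum over ``wheel''-type graphs whose distinguished edge carries the $dt$-piece $\dot\theta_t\wedge dt$ of the propagator, so that $\Theta = \sum_{k\ge 2}\hbar^k\Theta_k$ with each $\Theta_k$ a local functional of the shape $\int_M dt\wedge\Tr(\dot\theta_t\wedge\Omega_t)\cdot w_k$, where $w_k$ is the graph weight (a differential form on $X$) contracted against the $\pi_0$-components of the $L_\infty$-operations $l_i$. (The vanishing of the $\hbar^0$ and $\hbar^1$ parts of $\Theta$ is forced by \eqref{eqn:classical-master-equation} and a parity count on the one-loop fiber integral, cf.\ \cite{AS2, Bott-Cattaneo}.)

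To finish, note that the weight $w_k$ is, after contraction and using the flatness relation \eqref{eqn:equivalent-flatness-equation-D}, a $\bar\partial$-closed element of $\A_X^{0,*}(\OO_X)$; let $\alpha_k\in H^*(X,\OO_X)$ be its cohomology class and $\tilde\alpha_k$ the associated $D$-flat section of $\mathcal{W}$ furnished by Proposition~\ref{prop:weyl-bundle-quasi-isomorphic-Dolbeault}. Because $\tilde\alpha_k$ is $D$-flat and the Chern-Simons density $\Tr(\theta_t\wedge d\theta_t + \frac{2}{3}\theta_t^3)$ is a top form on the closed manifold $M$, the ``horizontal'' part ($d_M^\vee$, $\nabla$, and $\fbracket{I_{cl},-}$) of the $Q_L$-variation of the flowed counterterm $W(\tilde{\mathbb{P}}_0^L,\hbar^k CS_{\tilde\alpha_k})$ vanishes up to RG flow, while the $dt\,\partial_t$-part equals, by the standard variation formula for the Chern-Simons $3$-form and integration by parts on $M$, a nonzero multiple of $\int_M dt\wedge\Tr(\dot\theta_t\wedge\Omega_t)\cdot\tilde\alpha_k$, hence of $\Theta_k$. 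Fixing the normalization of $\alpha_k$ so that these cancel, and noting that inserting $\hbar^k CS_{\tilde\alpha_k}$ can only generate a new anomaly one order higher in $\hbar$, the classes $\alpha_2,\alpha_3,\dots$ are produced inductively on the $\hbar$-degree, at each stage the residual anomaly being again of gravitational Chern-Simons transgression type. This yields the asserted family $\{\alpha_k\}_{k\ge 2}$ and effective functionals $I[L]$ satisfying \eqref{eqn:extended-QME}.

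The main obstacle is the explicit evaluation in the second step: isolating the $dt$-component of the two-vertex boundary integral and matching it, with all orientations, $\epsilon_{ijk}$-bookkeeping, fiber integrals over $S(TM)$, and symmetry factors accounted for, to the variation of the gravitational Chern-Simons $3$-form $\Tr(\theta_t\wedge d\theta_t + \frac{2}{3}\theta_t^3)$. This is the Rozansky-Witten analogue of the framing anomaly in perturbative Chern-Simons theory, and the relevant computation parallels those of Axelrod-Singer~\cite{AS2} and Bott-Cattaneo~\cite{Bott-Cattaneo}. A secondary point requiring care is verifying that the weights $w_k$ are genuinely $\bar\partial$-closed, so that the classes $\alpha_k$ are well-defined; this follows from Stokes' theorem on the higher-codimension strata of $M[V(\gamma)]$ and, once more, the Kontsevich vanishing of Lemma~\ref{lem:Kontsevich-vanishing}.
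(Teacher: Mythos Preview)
Your overall architecture---view the family QME as an $\hbar$-graded obstruction problem, identify the anomaly $\Theta$ as the $dt$-component of the boundary-stratum integrals, and cancel it loop by loop with gravitational Chern--Simons counterterms---is exactly the paper's strategy, and your identification of the shape of the anomaly as $\int_{M\times[0,1]} dt\wedge\Tr(\dot\theta_t\wedge\Omega_t)\cdot(\text{weight on }X)$ is correct (this is just the $dt$-piece of the paper's $\int_{M\times[0,1]}\Tr(\tilde\Omega\wedge\tilde\Omega)\cdot\tilde\alpha_k$).

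There is, however, a genuine gap in your second step. You assert that ``the Kontsevich-type vanishing of Lemma~\ref{lem:Kontsevich-vanishing} \dots\ kills every boundary stratum with $|S|\ge 3$, leaving only two-vertex strata.'' This is false in the family setting, and it is precisely the point where the family argument diverges from the fixed-metric one. The proof of Lemma~\ref{lem:Kontsevich-vanishing} uses crucially that at most \emph{one} edge can be ``distinguished'' (i.e.\ contribute the curvature piece of the boundary propagator), because the curvature is a $2$-form and the base $M$ is $3$-dimensional. Over $M\times[0,1]$ the base is $4$-dimensional, so \emph{two} curvature terms can survive; with two distinguished edges the bivalent-vertex counting underlying Kontsevich's Lemma~2.1 no longer forces vanishing. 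Concretely, the $2$-loop anomaly comes from the $\Theta$-graph ($|S|=2$, as you say), but the next anomaly $\alpha_3$ comes from a trivalent graph on \emph{four} vertices with all four colliding, i.e.\ an $|S|=4$ stratum (the paper displays this explicitly). Your ``wheel-type, single distinguished edge'' picture therefore misses all contributions beyond $2$-loop.

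The fix is not hard but does change the bookkeeping: at each loop order the primitive anomaly is the boundary integral over the principal face $S=V(\gamma)$ of a closed trivalent graph $\gamma$ (theta, double-theta/tetrahedron, \dots), two of whose propagators supply the curvature factor $\Tr(\tilde\Omega\wedge\tilde\Omega)$ while the remaining ones are integrated over the fiber; the combinatorial weight on $X$ is then the Rozansky--Witten class of $\gamma$, not a wheel weight. Once you replace your two-vertex claim with this, the rest of your argument (flatness of $\tilde\alpha_k$, variation of the Chern--Simons $3$-form, inductive cancellation, and the check that vertices labeled by $CS_{\tilde\alpha_k}$ generate no new primitive anomalies) goes through and matches the paper.
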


\begin{proof}
Similar to the proof of Theorem \ref{theorem: QME}, we are going to apply  Stokes Theorem and try to show the vanishing of some integrals on the boundary of configuration spaces. To see the failure of the vanishing of
equation (\ref{eqn:extended-QME}) without the local quantum corrections $CS_{\alpha_k}$'s, note that unlike the proof of QME for a fixed metric in Theorem \ref{theorem: QME}, there could be the following graph, whose boundary integral is not vanishing. This spoils the vanishing argument in Lemma \ref{lem:Kontsevich-vanishing}. (Note that in the case when the metric is fixed, the corresponding boundary integral vanishes by the type reason.)
\begin{equation}\label{eqn:two-loop-anomaly}
 \figbox{0.23}{2-loop}
\end{equation}
The (unique) boundary integral associated to the above picture is given explicitly by a multiple of the following integral:
$$
\int_{M\times [0,1]}\Tr(\tilde{\Omega}\wedge\tilde{\Omega}).
$$
And the combinatorial part of the graph (\ref{eqn:two-loop-anomaly}) contributes a cohomology class
$$
\alpha_2=\Tr(R^{1,1}\wedge R^{1,1})\lrcorner\omega^{-1}\in H^2(X,\OO_X).
$$
Notice that the following graphs with tails contribute  the higher order terms in the flat section $\tilde{\alpha}_2$:
\begin{equation}\label{eqn:2-loop-jets}
\figbox{0.23}{2-loop-jets}
\end{equation}
Thus, the  boundary integrals corresponding to graphs (\ref{eqn:two-loop-anomaly}) and (\ref{eqn:2-loop-jets}) together  contribute the local gauge-fixing anomaly:
\begin{equation}\label{eqn:2-loop-anomaly}
\int_{M\times [0,1]}\Tr(\tilde{\Omega}\wedge\tilde{\Omega})\cdot\tilde{\alpha}_2
\end{equation}
with inputs in $\in\A_M^0\otimes\g_X$.

By a simple counting argument, it is not difficult to see that the sum of the boundary integrals in equation (\ref{eqn:two-loop-anomaly}) and \eqref{eqn:2-loop-jets} is the anomaly at $2$-loop. This anomaly can be canceled by adding the  Chern-Simons action functional $CS_{\tilde{\alpha}_2}(\theta)$. It follows that the following functional
$$
I_{cl}+\hbar^2CS_{\tilde{\alpha}_2}(\theta)
$$
satisfies the QME (\ref{eqn:extended-QME}) modulo $\hbar^3$. To find the  anomaly of the next order, we define the following effective action functionals:
$$
I_2[L]:=\sum_{\gamma}\frac{\hbar^{g(\gamma)}}{|Aut(\gamma)|}W_\gamma(\tilde{\mathbb{P}}_0^L,I_{cl}+\hbar^2CS_{\tilde{\alpha}_2}(\theta)).
$$
There are two types of Feynman weights in the above expression: either every vertex is labeled by the classical interaction, or at least one vertex is labeled by $CS_{\alpha_2}(\theta)$, as shown in the following picture:
$$
\figbox{0.23}{3-loop}
$$
By the same argument, the next anomaly is contributed by certain integral on $\partial(M[V])\times [0,1]$. We claim that the second type of Feynman weights does not contribute to such integrals: Since the Chern-Simons term labeling the right vertex has only $0$-form input, by the type reason, the propagator must contribute a $2$-form to the copy of $M$ corresponding to the left vertex labeled by $I_{cl}$. This vertex either has at least one input of $1$-form on $M$ or connected by another propagator. In the former case, the integral vanishes by type reason since the term $CS_{\alpha_2}(\theta)$ already contributes a $3$-form on the base manifold $\Delta=M$. In the latter case, the vanishing of the corresponding integral on the boundary components can be shown in the same way as in Theorem \ref{theorem: QME}.  The Feynman weights of the first type contributes to an anomaly similar to equation (\ref{eqn:2-loop-anomaly}). For instance, let $\gamma$ denote the following graph with $4$-vertices, there is the possible boundary integral  corresponding to the situation where all four vertices collide. And the corresponding anomaly is similar to that in equation \eqref{eqn:2-loop-anomaly}.
\begin{equation}
 \figbox{0.36}{double-theta-anomaly}
\end{equation}

For higher loop graphs, similar graph integrals show up, and the previous argument for $2$-loop graphs can be repeated.  It follows that  the RG flow of the following local functional satisfies the QME (\ref{eqn:extended-QME}). In particular this functional depends on a choice of a framing on the 3-dimensional manifold $M$:
\begin{equation}\label{eqn:interaction-choice-of-framing}
I_{fr}:=I_{cl}+\sum_{k\geq 2}CS_{\tilde{\alpha}_k}(\theta)\hbar^k.
\end{equation}
\end{proof}
\begin{rmk}
A simple counting shows that for a fixed holomorphic symplectic manifold $X$, the number of correction terms $CS_{\alpha_k}(\theta)$ must be finite, since every vertex labeled by $I_{cl}$ must contribute a $(0,1)$-form on the target space $X$. For instance, when the target manifold is of complex dimension $2$, the anomaly (\ref{eqn:2-loop-anomaly}) is the only one, since the two vertices in the $\theta$-graph (\ref{eqn:two-loop-anomaly}) already contribute a $(0,2)$-form on the target $X$.
\end{rmk}

It is worthwhile to point out that, this anomaly in Rozansky-Witten model is similar to the gauge fixing anomaly in Chern-Simons theory (see \cite{AS2}), but more complicated: in both CS and RW theories, the analytic parts of the anomalies are given by the integral of the first Pontrjagin form. But the combinatorial part in CS theory is always the Killing form on the corresponding Lie algebra, while in RW theory there are more cohomology classes on $X$ that can contribute to the anomaly.

\section{Observable theory}\label{sec:observable_theory}
Let $(\E,S)$ be a classical field theory on a space-time manifold $M$, for which the interaction part of the action functional $S$ is given by $I_{cl}$, and let $\{I[L]\}_{L>0}$ be a quantization. As explained in full generality in the work of Costello-Gwilliam \cite{Kevin-Owen}, both the classical and quantum observables of this field theory admit the structure of {\em factorization algebras} on $M$. We will investigate the observable theory of our Rozansky-Witten model in this section.

In Section \ref{subsection:classical-observables}, we discuss the classical observables. In Section \ref{subsection:local-quantum-observables}, we will describe the local quantum observables supported on open subsets which are homeomorphic to handle bodies. In particular, we show that the cohomology of quantum observables on a genus $g$ handle body is isomorphic to $\mathcal{H}_g[[\hbar]]$, where $\mathcal{H}_g$ is the Hilbert space associated to a genus $g$ Riemann surface as conjectured by Rozansky and Witten in \cite{RW}. In Section \ref{subsection:correlation-function}, we will look into the global quantum observables and define their correlation functions. We show that the partition function of the model, namely, the correlation function of the constant observable $1$,  gives rise to the Rozansky-Witten invariants.

\subsection{Classical observables}\label{subsection:classical-observables}
The main goal of this subsection is to describe the classical observables on open subsets which are homeomorphic to handle bodies $H_g$.  Let us first recall the definition of classical observables.
\begin{defn}
The {\em classical observables} of our Rozansky-Witten model is the graded commutative factorization algebra whose value on an open subset $U\subset M$ is the following cochain complex:
\begin{equation}\label{eqn:classical-observable}
\text{Obs}^{cl}(U):=\left(\mathcal{O}(\E(U))), Q+\fbracket{I_{cl},-}\right).
\end{equation}
Here $\E(U):=\A_M(U)\otimes\g_X[1]$, and
$$
\mathcal{O}(\E(U)):=\prod_{k\geq 0}\text{Sym}^k(\mathcal{E}(U)^\vee)
$$
are formal power series on $\E(U)$ valued in $\A_X$.
\end{defn}

Let $H_g\subset M$ be an open subset which is homeomorphic to a genus $g$ handle body.
\begin{equation}\label{eqn:handle-body}
 \figbox{0.33}{handlebody}
\end{equation}
The symplectic pairing on the space of fields induces an isomorphism
$$
\mathcal{E}(H_g)^\vee\cong \bar{\mathcal{E}}_{cpt}(H_g),
$$
where $\bar{\mathcal{E}}_{cpt}(H_g)$ denotes the space of compactly-supported distributional sections of $\mathcal{E}$ over $H_g$. To compute the cohomology of the cochain complex \eqref{eqn:classical-observable}, note that the differential $Q+\{I_{cl},-\}$ can be written as the sum of the de Rham differential $d_M$ on $M$ and  $\nabla+\{I_{cl},-\}$. We consider the filtration on $\mathcal{O}(\mathcal{E}(H_g))$ induced by the degree of differential forms in $\A_X$:
$$
F^k(\text{Obs}^{cl})=\A_X^k\ \text{Obs}^{cl}.
$$
It is clear that the terms $\nabla+\{I_{cl},-\}$ in the differential increases the degree by $1$ while $d_M$ keeps it fixed.

By Atiyah-Bott's lemma \cite{AB}, when computing the de Rham cohomology, the chain complex of compactly-supported currents can be replaced by that of compactly-supported smooth differential forms since they are quasi-isomorphic.  Thus the first page of the spectral sequence associated to this filtration is given by
$$
\left(\text{Sym}^*\left(H_{cpt}^*(H_g)\otimes\g_X[1]^\vee\right),\nabla+\fbracket{I_{cl},-}\right).
$$
The compactly-supported de Rham cohomology on $H_g$ is described in the following lemma, whose proof follows from Poincar\'{e} duality and a standard computation of singular homology of $H_g$ using Mayer-Vietoris sequences:
\begin{lem}\label{lemma:compactly-supported-cohomology-handle-body}
The compactly-supported de Rham cohomology of $H_g$ is given by:
\begin{equation*}
 \begin{aligned}
  &H^3_{cpt}(H_g)\cong H_0(H_g)\cong\C,\\
  &H^2_{cpt}(H_g)\cong H_1(H_g)\cong\C^g,\\
  &H^i_{cpt}(H_g)=0,\ \textit{for}\ i=0,1.
 \end{aligned}
\end{equation*}
\end{lem}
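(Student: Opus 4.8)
The strategy is to deduce the lemma from Poincar\'e duality together with an elementary computation of the ordinary singular homology of $H_g$. Being homeomorphic to an open genus $g$ handle body, $H_g$ is an oriented $3$-manifold without boundary (it inherits an orientation from $M$; in any case handle bodies are orientable), so Poincar\'e duality for oriented, possibly non-compact, manifolds provides natural isomorphisms
\[
H^k_{cpt}(H_g;\C)\;\cong\;H_{3-k}(H_g;\C),\qquad k=0,1,2,3.
\]
Hence it suffices to show that $H_0(H_g)\cong\C$, $H_1(H_g)\cong\C^g$, and $H_i(H_g)=0$ for all $i\geq 2$; feeding this into the four duality isomorphisms then yields exactly the three displayed lines, the remaining two being $H^1_{cpt}(H_g)\cong H_2(H_g)=0$ and $H^0_{cpt}(H_g)\cong H_3(H_g)=0$.

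For the homology computation one uses that a genus $g$ handle body is homotopy equivalent to a wedge $\bigvee_{i=1}^{g}S^1$ of $g$ circles. This can be seen directly by retracting onto a spine, or proved by induction on $g$ via a Mayer--Vietoris sequence, writing $H_g$ as $H_{g-1}$ glued to a $1$-handle along (a neighborhood of) two points, so that the intersection is homotopy equivalent to two points and the handle is contractible; starting from $H_0\simeq\mathrm{pt}$ (an open $3$-ball), each step adds one new generator to $H_1$ and contributes nothing in higher degree, giving $H_0(H_g)\cong\C$, $H_1(H_g)\cong\C^{g}$, $H_{\geq 2}(H_g)=0$.

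Since all of this is standard algebraic topology, I do not expect a genuine obstacle; the only points deserving care are bookkeeping ones. First, because $H_g$ appears here as an \emph{open} submanifold of $M$ without boundary — rather than a compact manifold with boundary — the duality to invoke is the non-compact form $H^k_{cpt}(-)\cong H_{n-k}(-)$ (so that, e.g., $H^3_{cpt}(H_g)\cong H_0(H_g)\cong\C$), not Poincar\'e--Lefschetz duality. Second, one should recall that compactly supported de Rham cohomology agrees with compactly supported singular cohomology with $\C$-coefficients, which is what legitimizes passing between the analytic and topological sides — in the same spirit as the use of Atiyah--Bott's lemma \cite{AB} in the paragraph preceding the lemma. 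With these identifications in place the proof is just the Mayer--Vietoris calculation above followed by the displayed duality.
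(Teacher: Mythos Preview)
Your proposal is correct and follows essentially the same approach as the paper: the paper's proof is simply stated as ``follows from Poincar\'e duality and a standard computation of singular homology on $H_g$ using Mayer--Vietoris sequence,'' which is exactly what you do. Your additional remarks clarifying that the open (non-compact) form of Poincar\'e duality is the relevant one, and that compactly supported de Rham cohomology agrees with compactly supported singular cohomology with $\C$-coefficients, are helpful elaborations but do not constitute a different route.
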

\begin{rmk}
A basis of $H_1(H_g)$ is given by the dashed circles in the picture \eqref{eqn:handle-body}. The corresponding functional on smooth $1$-forms is simply the integral along these circles, which are of $\delta$-function type. For later consideration of RG flows of observables, we will take smooth representatives of these functionals.
\end{rmk}
\begin{rmk}
For the computation of  (both classical and quantum) local observables later, the framing correction in equation \eqref{eqn:interaction-choice-of-framing} does not cause problems. This is because we can choose a locally flat metric and a framing such that locally the connection $1$-forms vanish.
\end{rmk}

An immediate corollary of the above lemma is the following:
\begin{prop}\label{proposition:quasi-iso-observable-jet}
With the choice of a basis of smooth functionals on $\A^1(H_g)$ corresponding to the dashed circles in \eqref{eqn:handle-body} under the isomorphism $H^2_{cpt}(H_g)\cong H_1(H_g)$,  the first page of the spectral sequence is given by
\begin{equation}\label{eqn:classical-observable-L-infty-jet}
\left(Sym^*\left(H_{cpt}^*(H_g)\otimes\g_X[1]^\vee\right),\nabla+\fbracket{I_{cl},-}\right)\cong\left(\mathcal{W}\otimes_{\OO_X}\left(\wedge^*T_X^\vee\right)^{\otimes g},D\right).
\end{equation}
\end{prop}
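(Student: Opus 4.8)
The plan is to compute the $E_1$-page directly from the description already obtained in the excerpt together with Lemma~\ref{lemma:compactly-supported-cohomology-handle-body}, and to recognize the result as a Chevalley--Eilenberg complex. Since the $E_0$-differential is $d_M$, the first page has been identified with $\bigl(\Sym^*_{\A_X}(H^*_{cpt}(H_g)\otimes\g_X[1]^\vee),\ \nabla+\fbracket{I_{cl},-}\bigr)$, using the Atiyah--Bott lemma \cite{AB} to replace compactly-supported currents by compactly-supported smooth forms. By Lemma~\ref{lemma:compactly-supported-cohomology-handle-body} (equivalently, since $H_g$ deformation retracts onto a wedge of $g$ circles), $H^*_{cpt}(H_g)$ is the sum of one class $\mu^\vee$ in degree $3$ (Poincar\'e dual to $H^0(H_g)$) and $g$ classes $\ell_1^\vee,\dots,\ell_g^\vee$ in degree $2$ (dual to the loop classes in $H_1(H_g)$, i.e.\ to the dashed circles in \eqref{eqn:handle-body}). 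As $\Sym^*$ of a direct sum is the completed, $\A_X$-linear tensor product of the $\Sym^*$'s,
$$
\Sym^*_{\A_X}\!\bigl(H^*_{cpt}(H_g)\otimes\g_X[1]^\vee\bigr)\ \cong\ \Sym^*_{\A_X}\!\bigl(\mu^\vee\otimes\g_X[1]^\vee\bigr)\ \otimes_{\A_X}\ \bigotimes_{i=1}^{g}\Sym^*_{\A_X}\!\bigl(\ell_i^\vee\otimes\g_X[1]^\vee\bigr).
$$

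For the underlying graded object I would argue by a parity count. The generator $\mu^\vee$ is odd (degree $3$) while the $\ell_i^\vee$ are even (degree $2$), so the two kinds of factors carry opposite $\Z_2$-parities; hence one produces a completed symmetric algebra on the fibre $T_X^\vee$ and the other an exterior algebra. The normalization built into $\mathcal{W}=\A_X^*\otimes_{\OO_X}\widehat{\Sym}(T_X^\vee)[[\hbar]]$, in which the fibre coordinates $\delta_z^i$ are even, forces $\Sym^*_{\A_X}(\mu^\vee\otimes\g_X[1]^\vee)\cong\mathcal{W}$ --- the usual statement that the classical observables concentrated at the constant-map stratum are the Chevalley--Eilenberg cochains of the $L_\infty$ space $\g_X$ --- and correspondingly each $\Sym^*_{\A_X}(\ell_i^\vee\otimes\g_X[1]^\vee)\cong\A_X^*\otimes_{\OO_X}\wedge^*T_X^\vee$. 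Multiplying, the underlying graded object is $\mathcal{W}\otimes_{\OO_X}(\wedge^*T_X^\vee)^{\otimes g}$. Conceptually, the $E_1$-page is the Chevalley--Eilenberg complex of the $L_\infty$ space $\g_X\otimes H^*(H_g)$; since $H^{\geq 2}(H_g)=0$ this is a semidirect product $\g_X\ltimes\mathfrak a$ with $\mathfrak a\cong(\g_X[-1])^{\oplus g}$ an abelian ideal whose $g$ summands pairwise commute and on which $\g_X$ acts by shifted adjoint actions, whose $\mathrm{CE}$ complex is exactly $\mathcal{W}\otimes_{\OO_X}(\wedge^*T_X^\vee)^{\otimes g}$.

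Then I would match the differentials. The piece $\nabla$ of $Q=d_M+\nabla$ surviving to $E_1$ acts as the connection on all of $\mathcal{W}\otimes_{\OO_X}(\wedge^*T_X^\vee)^{\otimes g}$, i.e.\ the $\nabla$-part of $D$. The bracket term $\fbracket{I_{cl},-}$, restricted to the $\mathcal{W}$-factor, is the codifferential dual to the $L_\infty$ operations $l_k$ on $\g_X$; by the Proposition constructing $D$ this is precisely $-\delta+\tfrac{1}{\hbar}[I,-]_{\mathcal{W}}$, the $-\delta$ coming from the term $\rho(-\delta^{-1}(\omega))$ and $\tfrac{1}{\hbar}[I,-]_{\mathcal{W}}$ from $\rho(I)$ in the classical action \eqref{eqn:action-functional}. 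On a wedge factor, a term of $\fbracket{I_{cl},-}$ contracting a loop variable against one leg of some $l_k$ gives exactly the $\g_X$-module action on $\wedge^*T_X^\vee$, i.e.\ the natural extension of $D$ to the tensor product; and any term of $\fbracket{I_{cl},-}$ pairing two distinct loop variables would factor through the cup product $H^1(H_g)\otimes H^1(H_g)\to H^2(H_g)=0$, hence does not occur, so distinct wedge factors are never mixed. Thus the $E_1$-differential is $D$, which finishes the identification.

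The main obstacle I anticipate is bookkeeping at two points. First, one must check that the differential induced on $E_1$ is genuinely the Chevalley--Eilenberg differential with no spurious contributions: this calls for smooth (harmonic, not $\delta$-function type) representatives of the classes dual to the dashed circles, as flagged in the remark after Lemma~\ref{lemma:compactly-supported-cohomology-handle-body}, and for verifying that every would-be term of $\fbracket{I_{cl},-}$ linking two circles drops out by degree or support (equivalently, by $H^{\geq 2}(H_g)=0$); recall also that $Q+\fbracket{I_{cl},-}$ squares to zero on the nose by the classical master equation \eqref{eqn:classical-master-equation}, and that by the remark in the excerpt the framing corrections \eqref{eqn:interaction-choice-of-framing} are irrelevant for this local computation. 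Second, the $\Z_2$-parity and sign bookkeeping that turns $\widehat{\Sym}$ into $\wedge^*$ on the loop sector, and matches signs across the isomorphism, must be done once the degree-shift conventions on $\g_X$ and on $H^*(H_g)$ are fixed; it is routine but the likeliest source of an error.
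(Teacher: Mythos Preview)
Your proposal is correct and follows essentially the same approach as the paper: split $H^*_{cpt}(H_g)$ via Lemma~\ref{lemma:compactly-supported-cohomology-handle-body}, factor the symmetric algebra accordingly, and recognize the result as the Chevalley--Eilenberg complex $C^*(\g_X,(\wedge^*\g_X[1]^\vee)^{\otimes g})\cong\mathcal{W}\otimes_{\OO_X}(\wedge^*T_X^\vee)^{\otimes g}$. Your argument is in fact more explicit than the paper's at the point where the differentials are matched---the paper simply asserts compatibility with the Chevalley--Eilenberg differential, whereas you spell out why cross-terms between distinct loop factors drop out (via $H^{\geq 2}(H_g)=0$) and how $-\delta+\tfrac{1}{\hbar}[I,-]_{\mathcal{W}}$ arises from $\fbracket{I_{cl},-}$.
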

\begin{proof}
Let $e_0\in H_0(H_g;\mathbb{Z})$ be a generator, and let $\{e_1,\cdots,e_g\}$ be a basis of $H_1(H_g;\mathbb{Z})$. It is clear from Lemma
\ref{lemma:compactly-supported-cohomology-handle-body} that $\{e_0,e_1,\cdots, e_g\}$ induces the following isomorphisms:
\begin{align*}
\Sym^*\left(H_{cpt}^*(H_g)\otimes\g_X[1]^\vee\right)&\cong\Sym^*\left(H_0(H_g)\otimes\g_X[1]^\vee\right)\otimes_{\A_X}
\Sym^*\left(H_1(H_g)\otimes\g_X[1]^\vee\right)\\
&\cong C^*(\g_X,\wedge^*(\C^g\otimes\g_X[1]^\vee))\\
&\cong C^*(\g_X,(\wedge^*\g_X[1]^\vee)^{\otimes g}),
\end{align*}
where the last line is an $L_\infty$-module over $\g_X$ with a canonical Chevalley-Eilenberg differential. The above isomorphism is actually compatible with the Chevalley-Eilenberg differential and the differential on the left hand side. There is the following isomorphism of cochain complexes, which finishes the proof:
\begin{equation}\label{eqn:local-observable-CE-jet}
\left(C^*(\g_X,(\wedge^*\g_X[1])^{\otimes g}), d_{CE}\right)\cong\left(\mathcal{W}\otimes_{\OO_X}\left(\wedge^*T_X^\vee\right)^{\otimes g},D\right).
\end{equation}
\end{proof}

Similar to the proof of Proposition \ref{prop:weyl-bundle-quasi-isomorphic-Dolbeault}, the cochain complex of the RHS of equation \eqref{eqn:local-observable-CE-jet} is quasi-isomorphic to the Dolbeault complex of $\left(\wedge^*T_X^\vee\right)^{\otimes g}$, and we conclude that
\begin{thm}\label{thm:local-classical-observable}
The cohomology of local classical observables on the handle body $H_g\subset M$ is given by
$$
H^*\left(\text{Obs}^{cl}(H_g),Q+\{I_{cl},-\}\right)\cong H^*\left(X, \left(\wedge^*T_X^\vee\right)^{\otimes g}\right) \cong H^*\left(X,\left(\wedge^*T_X\right)^{\otimes g}\right).
$$
The last identity follows from the isomorphism $T_X\cong T_X^\vee$ induced by the holomorphic symplectic structure $\omega$.
\end{thm}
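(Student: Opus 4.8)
The plan is to combine the spectral-sequence set-up that precedes the statement with an extension of Proposition~\ref{prop:weyl-bundle-quasi-isomorphic-Dolbeault} from $\OO_X$ to modules over $\g_X$, and to finish with the symplectic identification $T_X\cong T_X^\vee$. First I would upgrade the Atiyah--Bott step to a genuine quasi-isomorphism of complexes, rather than merely an identification of the first page of the spectral sequence. Writing $\text{Obs}^{cl}(H_g)=\widehat{\Sym}_{\A_X}(\E(H_g)^\vee)$ with $\E(H_g)^\vee\cong\bar{\E}_{cpt}(H_g)$, this is the Chevalley--Eilenberg-type cochain complex of the curved $L_\infty$ algebra $\bar{\A}_{M,cpt}(H_g)\otimes\g_X$, whose differential restricts to $d_M$ on the de Rham factor. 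By Atiyah--Bott's lemma \cite{AB} the latter is quasi-isomorphic to $\A_{M,cpt}(H_g)\otimes\g_X$, and a choice of contraction of $(\A_{M,cpt}(H_g),d_M)$ onto the finite-dimensional complex $(H^*_{cpt}(H_g),0)$ lets one homotopy-transfer the $L_\infty$ structure, producing a quasi-isomorphism
$$
\text{Obs}^{cl}(H_g)\ \simeq\ \left(\widehat{\Sym}_{\A_X}\big(H^*_{cpt}(H_g)\otimes\g_X[1]^\vee\big),\ \nabla+\{I_{cl},-\}\right).
$$
In other words, the spectral sequence of the filtration by $\A_X$-degree degenerates at $E_2$. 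This is the step I expect to demand the most care: one must check that the completion $\widehat{\Sym}$ and the passage from distributional to smooth sections are compatible with homotopy transfer, but since $H^*_{cpt}(H_g)$ is finite-dimensional and concentrated in degrees $2$ and $3$ (Lemma~\ref{lemma:compactly-supported-cohomology-handle-body}), this should be manageable.

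Next I would invoke Proposition~\ref{proposition:quasi-iso-observable-jet} to rewrite the right-hand side as $(\mathcal{W}\otimes_{\OO_X}(\wedge^*T_X^\vee)^{\otimes g},D)$, and then prove the $(\wedge^*T_X^\vee)^{\otimes g}$-twisted analogue of Proposition~\ref{prop:weyl-bundle-quasi-isomorphic-Dolbeault}: the projection $\pi_0$ onto the $(0,*,0)$-component of the Weyl factor, carrying the holomorphic bundle factor along, is a cochain map to the Dolbeault complex $\A_X^{0,*}\big((\wedge^*T_X^\vee)^{\otimes g}\big)$, and it is a quasi-isomorphism. The argument is verbatim that of Proposition~\ref{prop:weyl-bundle-quasi-isomorphic-Dolbeault}: since $\delta$ and $\delta^{-1}$ act only on the $\mathcal{W}$-factor and $\delta$ applies nontrivially to the leading term of any section killed by $\pi_0$, a $D$-flat section is determined by its $\pi_0$-image; conversely a $\bar\partial$-closed form $\alpha_0$ extends to the $D$-flat section $\alpha:=\alpha_0+\delta^{-1}(\nabla\alpha+\frac1\hbar[I,\alpha]_{\mathcal{W}})$ by the same iteration, using $\delta^2=(\delta^{-1})^2=0$ and the anticommutation $\delta\nabla+\nabla\delta=0$ of Lemma~\ref{lem:delta-nabla-anicommutes}. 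Only the $\mathcal{W}$-bookkeeping (weights in $\delta_z$) enters this argument, so the extra holomorphic tensor factor changes nothing essential.

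Finally, the Dolbeault complex $\A_X^{0,*}\big((\wedge^*T_X^\vee)^{\otimes g}\big)$ computes the sheaf cohomology $H^*\big(X,(\wedge^*T_X^\vee)^{\otimes g}\big)$, which gives the first isomorphism of the theorem. For the second, the non-degenerate holomorphic $2$-form $\omega$ defines an isomorphism of holomorphic vector bundles $T_X\xrightarrow{\ \sim\ }T_X^\vee$, $v\mapsto\iota_v\omega$, whence $(\wedge^*T_X)^{\otimes g}\cong(\wedge^*T_X^\vee)^{\otimes g}$ and $H^*\big(X,(\wedge^*T_X^\vee)^{\otimes g}\big)\cong H^*\big(X,(\wedge^*T_X)^{\otimes g}\big)$. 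Apart from the degeneration/transfer point in the first step, the whole argument is a mild extension of computations already carried out in Section~\ref{sec:classical_theory}.
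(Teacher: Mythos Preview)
Your argument is correct and follows the paper's own route: the $\A_X$-degree filtration, identification of the $E_1$-page with $(\mathcal{W}\otimes_{\OO_X}(\wedge^*T_X^\vee)^{\otimes g},D)$ via Proposition~\ref{proposition:quasi-iso-observable-jet}, the twisted analogue of Proposition~\ref{prop:weyl-bundle-quasi-isomorphic-Dolbeault} to reach the Dolbeault complex, and finally the symplectic identification $T_X\cong T_X^\vee$. The only difference is one of explicitness: you flag the passage from $E_2$ to the final answer (equivalently, the claim that homotopy transfer along the $d_M$-contraction yields no higher corrections beyond $\nabla+\{I_{cl},-\}$) as the step demanding care, whereas the paper simply asserts the conclusion after computing $E_2$ without commenting on degeneration.
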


\subsection{Local quantum observables}\label{subsection:local-quantum-observables}
In this subsection, we study the local quantum observables of our Rozansky-Witten model. In particular, we show that the local quantum observables do not receive any quantum corrections, and hence their cohomology is isomorphic to that of local classical observables.

We first briefly recall the definition and properties of local quantum observables; again, we refer the readers to \cite{Kevin-Owen} for more details. The philosophy of defining quantum observables is similar to that of defining quantum interactions: a quantum observable is, instead of a single observable, a family parametrized by scales. However, one subtlety is that the property of being supported on a fixed open set is not preserved under the RG flow using the propagators $\mathbb{P}_\epsilon^L$. To fix this issue, we have to apply the technique of parametrices:
\begin{defn}
A {\em parametrix} $\Phi$ is a distributional section
$$
   \Phi \in \Sym^2\bracket{\overline{\E}}
$$
with the following properties:
\begin{enumerate}
\item $\Phi$ is of cohomological degree $1$ and $(Q\otimes 1+1\otimes Q)\Phi=0$,
\item $\frac{1}{2}\bracket{H\otimes 1+1\otimes H}\Phi-\mathbb{K}_{0} \in \Sym^2\bracket{\E}$ is smooth, where $H=[Q,Q^{GF}]$ is the Laplacian and $\mathbb{K}_{0}=\lim\limits_{L\to 0}\mathbb{K}_L$ is the kernel of the identity operator.
\end{enumerate}
\end{defn}
In particular, the heat kernels $\mathbb{K}[L]$ are parametrices. Moreover, the propagators and heat kernels associated to parametrices can be defined in a similar way as $\mathbb{P}[L]$.

\begin{defn}
We define the {\em propagator} $\mathbb{P}(\Phi)$ and {\em BV kernel} $\mathbb{K}_{\Phi}$ associated to a parametrix $\Phi$ by
$$
  \mathbb{P}(\Phi):={1\over 2}\bracket{Q^{GF}\otimes 1+1\otimes Q^{GF}}\Phi \in \Sym^2\bracket{\overline{\E}}, \quad \mathbb{K}_{\Phi}:=\mathbb{K}_0-{1\over 2}\bracket{H\otimes 1+1\otimes H}\Phi.
$$
The {\em effective BV operator} $\Delta_\Phi:={\pa \over \pa \mathbb{K}_\Phi}$ induces a {\em BV bracket} $\{-,-\}_\Phi$ on $\OO\bracket{\E}$ in a similar way as the scale $L$ BV bracket $\{-,-\}_L$.
\end{defn}

Let $\Phi$ and $\Psi$ be two parametrices, we can define the RG flow operator from scale $\Phi$ to scale $\Psi$ similar to Definition \ref{defn:RG-flow-operator}, with $\mathbb{P}_\epsilon^L$ now replaced by $\mathbb{P}[\Psi]-\mathbb{P}[\Phi]$. The good thing of general parametrices is that there exist $\Phi$'s such that $\mathbb{K}[\Phi]$ (and $\mathbb{P}[\Phi]$) are supported close enough to the diagonal in $M\times M$, unlike $\mathbb{K}[L]$. Thus RG flow using such parametrices increases the support of observables only in a mild way.

\begin{defn}
A {\em local quantum observable} $O$ supported on an open set $U\subset M$ assigns to each parametrix $\Phi$ a functional
$$
O[\Phi]\in\OO(\E)[[\hbar]],
$$
such that the following conditions are satisfied:
\begin{enumerate}
 \item The renormalization group equation (RGE) is satisfied,
 \item $O[\Phi]$ is supported on $U$ when the support of $\Phi$ is close enough to the diagonal $\Delta\subset M\times M$.
\end{enumerate}
\end{defn}

The local quantum observable on $U$ also form a cochain complex with the differential $\hat{Q}$ defined by
$$
(\hat{Q}O)[\Phi]:=Q_\Phi(O[\Phi])+\fbracket{I[\Phi],O[\Phi]}_{\Phi}+\hbar\Delta_{\Phi}O[\Phi].
$$
$\hat{Q}$ squares to $0$ exactly because $I[\Phi]$ satisfies the QME.

\begin{rmk}
The differential can be equivalently written as
$$
(Q+\hbar\Delta_{\Phi})(O[\Phi]e^{I[\Phi]/\hbar})=(Q(O[\Phi])+\fbracket{I[\Phi],O[\Phi]}_{\Phi}+\hbar\Delta_{\Phi}O[\Phi])e^{I[\Phi]/\hbar}.
$$
\end{rmk}

It takes a little bit of work to show that $\hat{Q}O$ is also supported on $U$; see \cite{Kevin-Owen} for the proof. By the general theory of observables, the cohomology of local quantum observables is a deformation of that of classical observables:
$$
H^*(\text{Obs}^q,\hat{Q})\otimes_{\C[[\hbar]]}\C\cong H^*(\text{Obs}^{cl},Q+\{I_{cl},-\}).
$$

By Proposition \ref{proposition:quasi-iso-observable-jet}, a choice of basis of $H^*_{cpt}(H_g)$ induces a quasi-isomorphic embedding
$$
\left(\mathcal{W}\otimes_{\OO_X}\left(\wedge^*T_X^\vee\right)^{\otimes g},D\right)\hookrightarrow\text{Obs}^{cl}(H_g).
$$
Let $O_\mu\in\mathcal{W}\otimes_{\OO_X}(\wedge^*T_X^\vee)^{\otimes g}$ be a smooth local classical observable on $H_g$ representing $\mu\in H^*\left(X,\left(\wedge^*T_X^\vee\right)^{\otimes g}\right)$. To construct a quantum observable satisfying
$$
\lim_{L\rightarrow 0}O_\mu[L]=O_{\mu} \mod\hbar,
$$
it is enough to define the corresponding quantum observable associated to the parametrices $\mathbb{K}[L]$ as the value at a general parametrix $\Phi$ can be obtained by running the RG flow from $\mathbb{P}_0^L$ to $\mathbb{P}[\Phi]$.

Here we give an explicit description of the observable $O_\mu$ in terms of graphs. In the following picture, each yellow vertex denotes a smooth $2$-form Poincar\'{e} dual to an embedded circle on $H_g$, which represents a functional on $1$-form inputs on $M$.  And each green vertex denotes the volume form on $M$ which represents a functional on $0$-forms inputs on $M$. Note that for an observable $O_\mu$, there could be infinite many green vertices but only finite many yellow ones.
$$
\figbox{0.23}{local-classical-observable}
$$
In the above picture, every vertex allows only one input. To relate the graph with equation (\ref{eqn:local-observable-CE-jet}), note that there is the following:
$$
C^*\left(\g_X,\left(\wedge^*\g_X[1]^\vee\right)^{\otimes g}\right)\cong\mathcal{W}\otimes_{\OO_X}\left(\wedge^*T_X^\vee\right)^{\otimes g}.
$$
The green vertices contribute to the Weyl bundle component $\mathcal{W}$ and the yellow vertices contribute to the component $\left(\wedge^*T_X^\vee\right)^{\otimes g}$.

By the smoothness of the local observable $O_\mu$, we can construct the RG flow of these observables using the technique of configuration spaces. We have the following homomorphism:
\begin{equation}\label{eqn:splitting-classical-quantum-observable}
\begin{aligned}
\mathcal{W}\otimes_{\OO_X}\left(\wedge^*T_X^\vee\right)^{\otimes g}&\rightarrow\text{Obs}^q(H_g)\\
O_\mu&\mapsto O_\mu[L] := W\left(\tilde{\mathbb{P}}_0^L, I_{cl}, O_\mu\right).
\end{aligned}
\end{equation}

\begin{rmk}
By abuse of notations, we use $O_\mu$ to stand for both the classical and quantum observables.
\end{rmk}

It is not difficult to see that the quantum observable $O_\mu$ is supported on $H_g$ by construction. Here $W(\tilde{\mathbb{P}}_\epsilon^L, I_{cl}, O_\mu)$ denotes the sum of those Feynman weights such that exactly one vertex is labeled by $O_\mu$ and all the other vertices are labeled by $I_{cl}$. The following lemma shows that the homomorphism \eqref{eqn:splitting-classical-quantum-observable} is in fact a cochain map:
\begin{lem}
The homomorphism \eqref{eqn:splitting-classical-quantum-observable} commutes with differentials.
\end{lem}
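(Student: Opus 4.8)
The plan is to prove the cochain-map identity $\hat Q\bigl(O_\mu[L]\bigr)=\bigl(D O_\mu\bigr)[L]$, where $\hat Q=Q_L+\fbracket{I[L],-}_L+\hbar\Delta_L$ is the differential on $\mathrm{Obs}^q(H_g)$ and $D=\nabla-\delta+\frac1\hbar[I,-]_{\mathcal W}$ is the classical differential on $\mathcal W\otimes_{\OO_X}\left(\wedge^*T_X^\vee\right)^{\otimes g}$ which, through Propositions~\ref{prop:weyl-bundle-quasi-isomorphic-Dolbeault} and~\ref{proposition:quasi-iso-observable-jet}, computes the classical observables. Since $O_\mu[L]=W(\tilde{\mathbb P}_0^L,I_{cl},O_\mu)$ is assembled from Feynman weights on the compactified configuration spaces $M[V(\gamma)]$ --- now with exactly one distinguished vertex of each graph labelled by $O_\mu$ and all others by $I_{cl}$ --- the argument will run parallel to the proof of Theorem~\ref{theorem: QME}: apply the de Rham piece $d_M^\vee$ of $Q_L$ to each weight, use Stokes' theorem on $M[V(\gamma)]$ together with Lemma~\ref{lem:differential-of-propagator} ($d_M\tilde{\mathbb P}_0^L=-\mathbb K_L$), and sort the resulting terms. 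Conceptually, this is the instance, in our configuration-space model, of the general fact from Costello--Gwilliam that a solution of the QME makes the RG flow on observables into a cochain map.

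As in \eqref{eqn:QME-stokes}, $d_M^\vee W_\gamma(\tilde{\mathbb P}_0^L,I_{cl},O_\mu)$ is a sum of $\mathbb K_L$-contractions over the internal edges of $\gamma$ and an integral over $\partial M[V(\gamma)]$. Summed over all graphs, the $\mathbb K_L$-contractions over non-separating edges assemble into $\hbar\Delta_L(O_\mu[L])$, those over separating edges whose removal isolates a subgraph containing the $O_\mu$-vertex assemble into $\fbracket{I[L],O_\mu[L]}_L$, and the remaining contributions --- involving only $I_{cl}$-vertices --- are exactly the combination that the proof of Theorem~\ref{theorem: QME} shows is cancelled by the correction term $\nabla^2\int_0^LQ^{GF}e^{-t[Q,Q^{GF}]}dt$ in $Q_L$ together with the curving, so they drop out. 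What remains is the boundary integral over codimension-one strata $M(S)^0$ in which the $O_\mu$-vertex lies among the colliding points.

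For $|S|\ge 3$ this integral vanishes by Lemma~\ref{lem:Kontsevich-vanishing}: choosing a locally flat metric and a framing so that the connection $1$-forms vanish locally, the $O_\mu$-vertex contributes only the volume form and the $2$-forms Poincar\'e dual to the handle circles, all pulled back from the collision point, so it behaves like ordinary tail inputs and does not enter the fiber integral of the propagators, whence Kontsevich's vanishing applies verbatim. For $|S|=2$, i.e. the $O_\mu$-vertex colliding with a single $I_{cl}$-vertex, the fiber integral over $\partial M[2]\cong S(TM)$ is again $1$ (as in the lemma identifying the boundary integral with $I_{m,n}$), and the resulting local operation inserted at $O_\mu$ is, by the explicit formula \eqref{eqn:delta-bracket} for $\delta$ and the identification of the higher $L_\infty$ brackets with the Taylor coefficients of $I$, precisely $-\delta O_\mu+\frac1\hbar[I,O_\mu]_{\mathcal W}$. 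Finally the $\nabla$ part of $Q_L$ acts directly on $O_\mu[L]$ and, by compatibility of $\nabla$ with the RG flow (as in \eqref{eqn:l_1_QME}), yields $(\nabla O_\mu)[L]$. Adding these up gives $\hat Q(O_\mu[L])=\bigl(\nabla O_\mu-\delta O_\mu+\tfrac1\hbar[I,O_\mu]_{\mathcal W}\bigr)[L]=(D O_\mu)[L]$.

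The main obstacle I expect is the bookkeeping: matching each boundary stratum of $M[V(\gamma)]$ to the correct summand of $\hat Q$ and of $D$ with the right signs, and in particular handling the strata in which the distinguished vertex takes part in the collision --- verifying in the $|S|\ge 3$ case that inserting the $O_\mu$-data cannot produce a new non-vanishing fiber integral, and in the $|S|=2$ case that the boundary integral reproduces exactly $-\delta O_\mu+\frac1\hbar[I,O_\mu]_{\mathcal W}$ and nothing more. Granting these local computations, summing over all graphs and propagating over all scales via the RGE yields the stated commutation with differentials.
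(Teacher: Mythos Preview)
Your proposal is correct and follows essentially the same route as the paper's proof: both run the Stokes-theorem argument of Theorem~\ref{theorem: QME} on the Feynman weights defining $O_\mu[L]$, use the QME for $I[L]$ to discard the strata not involving the $O_\mu$-vertex, invoke Kontsevich's vanishing for $|S|\ge 3$, and identify the surviving $|S|=2$ boundary integral (plus the $\nabla$-action) with the classical differential on $O_\mu$. Your write-up is slightly more explicit in decomposing the result as $D=\nabla-\delta+\tfrac1\hbar[I,-]_{\mathcal W}$, whereas the paper packages the same output as $QO_\mu+\{I_{cl},O_\mu\}=O_{d_{D_X}\mu}$, but the argument is the same.
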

\begin{proof}
To look at the differential
\begin{equation}\label{eqn:quantum-differential-observable}
Q_LO_\mu[L]+\fbracket{I[L],O_\mu[L]}_L+\hbar\Delta_LO_\mu[L],
\end{equation}
we can apply the same method as in the proof of Theorem \ref{theorem: QME}, and equation (\ref{eqn:quantum-differential-observable}) becomes some integral on the codimension $1$ strata of the configuration spaces. We only look at those primitive boundary integrals, and we have the following observations:
\begin{itemize}
 \item Note that since $I[L]$ already satisfies the scale $L$ QME, we only need to look at those graphs containing a vertex representing the observable $O_\mu$.
 \item The same argument as in Theorem \ref{theorem: QME} shows that if the graph contains at least two vertices labeled by $I_{cl}$, then the corresponding primitive boundary integral vanishes.
\end{itemize}
Thus it remains to consider the following graphs with exactly two vertices, one labeled by $I_{cl}$, and the other by the observable $O_\mu$:
$$
\figbox{0.23}{observable}
$$
The propagator in the above picture contributes the volume form on the fiber of $S(TM)$, and such graphs are exactly representing the functional
$$
QO_\mu+\fbracket{I_{cl},O_\mu}=O_{d_{D_X}\mu}.
$$
Thus we conclude that
$$
Q_LO_\mu[L]+\fbracket{I[L],O_\mu[L]}_L+\hbar\Delta_LO_\mu[L]
$$
is exactly the same as $O_{d_{D_X}\mu}[L]$.
\end{proof}

Note that the cochain complex of local quantum observables is $\Z_2$-graded. We have the following exact sequence for every
$k\geq 0$:
$$
\xymatrix{
 H^0(\hbar^{k+1}\cdot\text{Obs}^q(H_g))\ar[r] & H^0(\hbar^k\cdot\text{Obs}^q(H_g)) \ar[r] &
H^0(\text{Obs}^{cl}(H_g))\ar[d]\\
 H^1(\text{Obs}^{cl}(H_g)) \ar[u] & H^1(\hbar^k\cdot\text{Obs}^q(H_g)) \ar[l] &
H^1(\hbar^{k+1}\cdot\text{Obs}^q(H_g))\ar[l]
}
$$
and the cochain map \eqref{eqn:splitting-classical-quantum-observable} induces the following split exact sequences:
$$
0\rightarrow H^i(\hbar^{k+1}\cdot\text{Obs}^q(H_g))\rightarrow H^i(\hbar^k\cdot\text{Obs}^q(H_g))\rightarrow
H^i(\text{Obs}^{cl}(H_g))\rightarrow 0, \qquad i=0,1,\ k\geq 0.
$$
We summarize our discussions in the following theorem:
\begin{thm}[=Theorem \ref{thm:main1}]\label{thm:classical-to-quantum-observables}
The cohomology of local quantum observables on a genus $g$ handle body  is isomorphic to that of classical observables:
$$
H^*\left(\text{Obs}^q(H_g),\hat{Q}\right)\cong H^*\left(\text{Obs}^{cl}(H_g),Q+\{I_{cl},-\}\right)[[\hbar]]\cong
H^*(X,(\wedge^*T_X)^{\otimes g})[[\hbar]].
$$
\end{thm}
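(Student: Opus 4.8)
The plan is to assemble the statement from three ingredients already in place: the computation of the classical cohomology in Theorem~\ref{thm:local-classical-observable}, the explicit cochain map \eqref{eqn:splitting-classical-quantum-observable} together with the lemma that it commutes with the differentials, and a short piece of $\hbar$-adic homological algebra. No new geometric input should be required beyond what precedes the statement.

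First I would set up the filtration of $\text{Obs}^q(H_g)$ by powers of $\hbar$. The quantum differential $\hat Q = Q_L+\{I[L],-\}_L+\hbar\Delta_L$ reduces modulo $\hbar$ to the classical differential $Q+\{I_{cl},-\}$, and the terms distinguishing the two (the operator $\hbar\Delta_L$ together with the $O(\hbar)$ part of $I[L]$) strictly raise the $\hbar$-order. Hence $\hbar^k\text{Obs}^q(H_g)/\hbar^{k+1}\text{Obs}^q(H_g)\cong\text{Obs}^{cl}(H_g)$ as cochain complexes, which yields the short exact sequences $0\to\hbar^{k+1}\text{Obs}^q\to\hbar^k\text{Obs}^q\to\text{Obs}^{cl}\to 0$ and the long exact sequence in cohomology displayed just above the statement.

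The key point is then that the map \eqref{eqn:splitting-classical-quantum-observable}, $O_\mu\mapsto O_\mu[L]=W(\tilde{\mathbb P}_0^L,I_{cl},O_\mu)$, is a cochain map out of $(\mathcal W\otimes_{\OO_X}(\wedge^*T_X^\vee)^{\otimes g},D)$ --- which computes $H^*(\text{Obs}^{cl}(H_g))$ by Proposition~\ref{proposition:quasi-iso-observable-jet} --- into $\text{Obs}^q(H_g)$, and that composing it with reduction modulo $\hbar$ recovers the quasi-isomorphic embedding of Proposition~\ref{proposition:quasi-iso-observable-jet}. Therefore the reduction map $H^*(\text{Obs}^q)\to H^*(\text{Obs}^{cl})$ is surjective with a canonical splitting, so every connecting homomorphism in the long exact sequence vanishes. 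This produces the split short exact sequences $0\to H^i(\hbar^{k+1}\text{Obs}^q(H_g))\to H^i(\hbar^k\text{Obs}^q(H_g))\to H^i(\text{Obs}^{cl}(H_g))\to 0$ for $i=0,1$ and all $k\geq 0$. Iterating over $k$ and passing to the inverse limit --- legitimate since $\text{Obs}^q(H_g)$ is $\hbar$-adically complete, the transition maps are surjective, and the splittings are compatible across $k$ --- gives $H^*(\text{Obs}^q(H_g),\hat Q)\cong H^*(\text{Obs}^{cl}(H_g),Q+\{I_{cl},-\})[[\hbar]]$, and feeding in Theorem~\ref{thm:local-classical-observable} produces the identification with $H^*(X,(\wedge^*T_X)^{\otimes g})[[\hbar]]$.

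I expect the main obstacle to be the cochain-map property of \eqref{eqn:splitting-classical-quantum-observable}, which is the geometric heart of the matter and is handled in the lemma immediately preceding the statement: one reruns the Stokes-theorem argument of Theorem~\ref{theorem: QME} for stable graphs carrying exactly one vertex labeled by the observable $O_\mu$, notes that every boundary stratum with $|S|\geq 3$, or with two $I_{cl}$-vertices, vanishes by the Kontsevich-type computation of Lemma~\ref{lem:Kontsevich-vanishing}, and checks that the single surviving two-vertex boundary stratum contributes exactly $O_{d_{D_X}\mu}[L]$. Granting this, the homological algebra above is routine; the only remaining point needing a word of care is convergence of the $\hbar$-adic spectral sequence, which holds because only finitely many Feynman graphs contribute in each fixed power of $\hbar$.
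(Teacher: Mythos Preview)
Your proposal is correct and follows essentially the same route as the paper: the paper also sets up the $\hbar$-filtration, uses the cochain map \eqref{eqn:splitting-classical-quantum-observable} (whose compatibility with differentials is the lemma immediately preceding the theorem) to split the long exact sequences into short exact sequences, and then reads off the isomorphism by induction on $k$. You have in fact supplied more detail than the paper on the $\hbar$-adic convergence step, which the paper leaves implicit.
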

Corollary \ref{cor:main} follows immediately from this and Theorem \ref{thm:local-classical-observable}.

\subsection{Global quantum observables and correlation functions}\label{subsection:correlation-function}
In this subsection, we investigate global quantum observables of our Rozansky-Witten model and define their correlation functions. Our main purpose is to demonstrate that the partition function of our model gives rise to the original Rozansky-Witten invariants \cite{RW}.

Since the cochain complexes of observables are essentially independent of scales, we will restrict ourselves to the $\infty$-scale:
\begin{equation}\label{eqn:global-quantum-observables}
\text{Obs}^q(M):=\left(\OO(\E),Q_\infty+\{I[\infty],-\}_\infty+\hbar\Delta_\infty\right).
\end{equation}
\begin{defn}
The {\em space of harmonic fields} are those annihilated by the Laplacian $[Q,Q^{GF}]$:
$$
\H:=\fbracket{e\in\E: [Q,Q^{GF}]e=0}.
$$
\end{defn}

The following lemma reduces the computation of the cohomology of global observables on the whole fields $\E$ to that on the harmonic fields $\H$.
\begin{lem}
The following two cochain complexes are quasi-isomorphic:
$$
\left(\OO(\E),Q_\infty+\fbracket{I[\infty],-}_\infty+\hbar\Delta_\infty\right)\cong
\left(\OO(\H),(Q_\infty+\fbracket{I[\infty],-}_\infty+\hbar\Delta_\infty)|_\H\right).
$$
\end{lem}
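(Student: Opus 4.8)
The plan is to exhibit the restriction map as a quasi-isomorphism by \emph{homotopy transfer}: Hodge theory supplies the underlying linear contraction, and the homological perturbation lemma carries the full observable differential across it.

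First I would produce the linear homotopy data. Since $H=[Q,Q^{GF}]$ is the Laplacian, Hodge theory gives $\E=\H\oplus\im Q\oplus\im Q^{GF}$, and on the acyclic summand $\H^{\perp}=\im Q\oplus\im Q^{GF}$ the operator $\eta:=Q^{GF}G$ (with $G$ the Green operator of $H$, extended by $0$ on $\H$) is a contracting homotopy for $Q$. Together with the inclusion $\iota:\H\hookrightarrow\E$ and the harmonic projection $\pi:\E\twoheadrightarrow\H$ this is a special deformation retract: $\pi\iota=\mathrm{id}$, $\iota\pi-\mathrm{id}=Q\eta+\eta Q$, and $\eta^{2}=\pi\eta=\eta\iota=0$. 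Applying $\widehat{\Sym}\bigl((-)^{\vee}\bigr)$ degreewise transports this to a special deformation retract of $(\OO(\E),Q)$ onto $(\OO(\H),Q|_{\H})$, with an induced homotopy $\eta'$ built from one-leg insertions of $\eta$; the crucial structural fact is that $\eta$, hence $\eta'$, annihilates anything built purely from harmonic fields.

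Next I would treat the remaining pieces of the observable differential as a perturbation: write
$$\delta:=(Q_\infty-Q)+\fbracket{I[\infty],-}_\infty+\hbar\Delta_\infty,$$
so the full differential on $\OO(\E)[[\hbar]]$ is $Q+\delta$. Here $Q_\infty-Q=-\fbracket{\rho(R),-}\circ\int_0^\infty Q^{GF}e^{-tH}\,dt$ vanishes identically on $\H$ (since $Q^{GF}$ kills harmonic fields), $\fbracket{I[\infty],-}_\infty$ raises symmetric degree because $I[\infty]$ is at least cubic modulo $\hbar$, and $\hbar\Delta_\infty$ raises the power of $\hbar$. With respect to the filtration of $\OO(\E)[[\hbar]]$ by powers of $\hbar$ together with symmetric degree, $\delta$ is therefore locally nilpotent on each filtration piece, so the transferred series converge and the homological perturbation lemma applies: it produces a differential $d'$ on $\OO(\H)[[\hbar]]$ together with chain maps $\iota',\pi'$ which are mutually inverse up to homotopy, hence quasi-isomorphisms.

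It then remains to identify $d'$ with the naive restriction $\bigl(Q_\infty+\fbracket{I[\infty],-}_\infty+\hbar\Delta_\infty\bigr)|_{\H}$. This is where working at scale $\infty$ is essential: the BV kernel $\mathbb{K}_\infty=\lim_{L\to\infty}\mathbb{K}_L$ is the Schwartz kernel of the harmonic projector $P_{\H}$, and the propagator $\mathbb{P}(\infty)=\int_0^\infty(Q^{GF}\otimes 1)\mathbb{K}_t\,dt$ is annihilated by a further application of $Q^{GF}$ (as $(Q^{GF})^2=0$ and $G$ commutes with $Q^{GF}$). Consequently $\eta'$ kills every term that $\fbracket{I[\infty],-}_\infty$ or $\hbar\Delta_\infty$ contributes after it has landed in the harmonic sector, so all higher iterates $\pi(\delta\eta')^{n}\delta\iota$ with $n\ge 1$ vanish and $d'=Q|_{\H}+\pi\,\delta\,\iota=\iota^{*}\bigl(Q_\infty+\fbracket{I[\infty],-}_\infty+\hbar\Delta_\infty\bigr)$, which is exactly the differential appearing in the statement. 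I expect this last identification to be the main obstacle: one must check carefully that the interplay of the homotopy $\eta'$ with the scale-$\infty$ kernels $\mathbb{K}_\infty$ and $\mathbb{P}(\infty)$ really kills all potential correction terms, so that the transferred complex is the restriction on the nose rather than merely a quasi-isomorphic deformation of it. A secondary technical point is to verify that the distributional-versus-smooth subtlety and the $\A_X$-linear, $\Z_2$-graded structure do not obstruct applying homotopy transfer verbatim (as in the classical-observable computation, one may need to invoke the quasi-isomorphism between smooth and distributional sections).
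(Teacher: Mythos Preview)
Your homotopy-transfer strategy is a genuinely different route from the paper's, and the linear part (Hodge decomposition, SDR on $\OO(\E)$, smallness of the perturbation in the $\hbar$-plus-symmetric-degree filtration) is sound. The problem is exactly where you flag it: the claim that the higher HPL corrections $p(\delta\eta')^n\delta\iota$ vanish is not justified by the reasoning you give. Concretely, for $\phi\in\OO(\H)$ the functional $\{I[\infty],\iota\phi\}_\infty$ does \emph{not} lie in the image of $\iota$: one slot of $I[\infty]$ is fed a harmonic element from $\mathbb{K}_\infty$, but the remaining tails of $I[\infty]$ still depend on all of $\E$, so the result is not a pull-back along the harmonic projection. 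Hence there is no reason for $\eta'$ (which is built from precomposition with $\eta=Q^{GF}G$ on single inputs) to annihilate it. The observation that $Q^{GF}$ kills $\mathbb{P}(\infty)$ concerns the internal edges of the Feynman graphs defining $I[\infty]$, not its tails, so it does not interact with $\eta'$ in the way you suggest. Without this vanishing you only get a quasi-isomorphism to $(\OO(\H)[[\hbar]],d')$ for some transferred $d'$ that a priori differs from the naive restriction, and the lemma as stated remains unproven.

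The paper bypasses this by arguing directly with the restriction map. It first checks (in one line, using $\mathbb{K}_\infty|_\H=\mathbb{K}_\infty$) that restriction to $\H$ is already a cochain map for the full differential. Surjectivity on cohomology is immediate since functionals on the closed subspace $\H$ extend to $\E$. Injectivity is an $\hbar$-adic induction: if a closed observable $O$ becomes exact on $\H$, its $\hbar^0$-part is an $\infty$-scale tree-level (hence classical) observable which is exact, so one can kill it by subtracting $\hat{Q}$ of a lift, pushing $O$ into $\hbar\cdot\text{Obs}^q(M)$, and iterate. This avoids ever having to compare a transferred differential with the restricted one. If you want to rescue the HPL route, one option is to combine it with the paper's observation that restriction is a chain map: then both $d'$ and the restricted differential are perturbations of $Q|_\H$ which agree on the associated graded of your filtration, and a comparison of the two spectral sequences finishes the job---but that is essentially as much work as the paper's direct argument.
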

\begin{proof}
The map is simply the restriction of functionals on harmonic fields, which is easily seen to be a cochain map due to the fact that
$$
\mathbb{K}[\infty]|_{\H}=\mathbb{K}[\infty].
$$
We now check that this is a quasi-isomorphism. Surjectivity is trivial as every functional on $\H$ can be extended to $\E$ because $\H$ is a closed subspace of $\E$. To show injectivity, let $O\in\OO(\E)[[\hbar]]$ be a global observable which is exact when restricted to $\H$:
\begin{equation}\label{eqn:observable-exact-harmonic}
O[\infty]|_\H=\nabla(O')+\{I[\infty],O'\}_\infty+\hbar\Delta_\infty O'.
\end{equation}
Here $O'$ denotes a functional on $\H$, and by abuse of notations again we will use the same notation for an extension to $\OO(\E)$. We will now write $O$ and $O'$ in their expansion in powers of $\hbar$:
$$
O[\infty]=O[\infty]^{(0)}+\hbar O[\infty]^{(1)}+\hbar^2O[\infty]^{(2)}+\cdots, \qquad O'[\infty]=O'[\infty]^{(0)}+\hbar
O'[\infty]^{(1)}+\hbar^2O'[\infty]^{(2)}+\cdots
$$
It is clear from equation \eqref{eqn:observable-exact-harmonic} that $O[\infty]_{(0)}$ is the $\infty$-scale tree-level RG flow of a
classical observable which is exact. Thus there exists $O_0[\infty]\in\OO(\E)$ such that
$$
O[\infty]^{(0)}=QO_0[\infty]+\{I[\infty]^{(0)},O_0[\infty]\}_\infty.
$$
Then we pick any global quantum observable $O_0^q$ whose tree-level (classical) part is just $O_0$. Now the observable
$$
O-\hat{Q}O^q_0\in\hbar\cdot\text{Obs}^q(M),
$$
i.e. it starts from the $\hbar$ term, and remains exact when restricted to $\H$ (since the restriction is a cochain map). This procedure can be repeated and we see that $O$ is exact.
\end{proof}

With the above lemma, we can describe the global quantum observables in terms of the geometry of the following BV bundle over $X$.
\begin{defn}
The BV bundle on $X$ associated to a 3-dimensional manifold $M$ is defined as:
\begin{equation}\label{eqn:BV-bundle}
 \Sym^*\left(\bigoplus_{i=0}^3(\H^i(M)\otimes T_X)^\vee\right)[[\hbar]].
\end{equation}

\end{defn}
In particular, the cochain complex of global observables of Rozansky-Witten model is quasi-isomorphic to the BV bundle with the differential $Q_\infty+\fbracket{I[\infty],-}_\infty+\hbar\Delta_\infty$. The BV bundle  consists of the ``bosons'' and ``fermions'', corresponding to the dual of even and odd cohomologies on $M$, respectively.

Suppose $\dim_{\C}(X) = 2n$. Then the holomorphic symplectic form $\omega$ gives a holomorphic volume form
$$
\Omega_X:=\omega^{n}\in\Gamma(X,\wedge^{2n}\Omega_X^1).
$$
\begin{notn}\label{notn:top-fermion}
We will let $\Omega_X^{\otimes(b_1(M)+1)}$ denote the  canonically normalized non-vanishing holomorphic section of the line bundle
$$
\wedge^{top}(\H^1(M)\otimes T_X)^\vee\otimes\wedge^{top}(\H^3(M)\otimes T_X)^\vee
$$
on $X$, which is naturally embedded in the BV bundle. The section $\Omega_X^{\otimes(b_1(M)+1)}$ is normalized canonically in the following sense: we can pick an integral basis of $H^1(M;\mathbb{Z})\subset H^1(M;\mathbb{R})$. This gives us a well-defined generator of $\wedge^{b_1(M)}H^1(M;\mathbb{R})$ up to a sign. The fact that holomorphic symplectic  manifolds are even dimensional over $\mathbb{C}$ resolves this ambiguity.
\end{notn}
\begin{rmk}\label{rmk:top-fermion-property}
 This section $\Omega_X^{\otimes(b_1(M)+1)}$ is the top degree fermion in the BV bundle. Moreover, it is annihilated by the BV operator $\Delta_\infty$ by its construction.
\end{rmk}


\subsubsection{BV integral and correlation function}
For the consideration of correlation functions, we first give an equivalent description of the complex of global observables, or equivalently the cochain complex of BV bundle. More precisely, there is the following isomorphism:
\begin{align*}
\left(\OO(\H)((\hbar)),Q_\infty+\fbracket{I[\infty],-}_\infty+\hbar\Delta_\infty\right)&\rightarrow\left(\mathcal{O}(\H)((\hbar)), Q_\infty+\hbar\Delta_\infty+\frac{\rho(R)}{\hbar}\right)\\
O&\mapsto  e^{I[\infty]/\hbar}\cdot O.
\end{align*}
In other words, there is the following identity of operators on the BV bundle:
\begin{equation}\label{eqn:different-form-BV-differential}
 Q_\infty+\fbracket{I[\infty],-}_\infty+\hbar\Delta_\infty=e^{-I[\infty]/\hbar}\circ \left(Q_\infty+\hbar\Delta_\infty+\frac{\rho(R)}{\hbar}\right)\circ e^{I[\infty]/\hbar}.
\end{equation}

\begin{prop}\label{prop:quasi-iso-jet-de-rham}
The following map is a quasi-isomorphism of complexes of sheaves, where the differential on the LHS is the usual de Rham differential, and the differential on the RHS is $Q_\infty+\hbar\Delta_\infty+\frac{\rho(R)}{\hbar}$
\begin{equation}\label{eqn:A_X-to-global-observable}
\begin{aligned}
 l:\A_X((\hbar))&\rightarrow \OO(\H)((\hbar)),\\
  \alpha&\mapsto\alpha\otimes\Omega_X^{\otimes(b_1(M)+1)}.
\end{aligned}
\end{equation}
\end{prop}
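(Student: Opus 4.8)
The plan is to exhibit $l$ as the inclusion of a subcomplex and then prove that the quotient complex is acyclic by a fiberwise spectral sequence argument governed by the BV operator $\Delta_\infty$. Since both sides are complexes of sheaves on $X$, it suffices to argue locally, where $\OO(\H)$ is an explicit completed symmetric algebra over $\A_X$ and, because $d_M$ annihilates harmonic forms, the differential is $\nabla+\hbar\Delta_\infty+\rho(R)/\hbar$ with $\nabla$ the exterior covariant derivative. The image of $l$ --- the ``top-fermion line'' $\A_X((\hbar))\cdot\Omega_X^{\otimes(b_1(M)+1)}$ --- is a subcomplex: $\Delta_\infty$ kills $\Omega_X^{\otimes(b_1(M)+1)}$ because it is the top fermion of the BV bundle (Remark \ref{rmk:top-fermion-property}); $\rho(R)$ kills it because every monomial of $\rho(R)|_{\H}$ pairs a direction of $\H^i(M)$ with one of $\H^j(M)$ for $i+j=3$, hence involves an odd-degree (fermionic) direction, and the product of the top fermion with any fermionic direction vanishes; and $\nabla\bigl(\alpha\otimes\Omega_X^{\otimes(b_1(M)+1)}\bigr)=(d_X\alpha)\otimes\Omega_X^{\otimes(b_1(M)+1)}$ since $\Omega_X^{\otimes(b_1(M)+1)}$ is $\nabla$-parallel ($\nabla$ being compatible with $\omega$, hence with $\Omega_X=\omega^{\,n}$ and all its powers). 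Thus $l$ identifies $(\A_X((\hbar)),d_X)$ with this subcomplex, and it remains to show the quotient is acyclic.

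For the quotient we use Hodge theory on $M$: Poincar\'e duality pairs $\H^0(M)$ with $\H^3(M)$ and the $b_1(M)$ directions of $\H^1(M)$ with those of $\H^2(M)$, so over $\A_X$ the algebra $\OO(\H)$ is the completed symmetric algebra on $b_1(M)+1$ ``conjugate pairs'' of copies of $T_X^\vee$, in each of which the even-degree harmonic direction contributes polynomial generators and the odd-degree one exterior generators; since $\mathbb{K}[\infty]$ is this Poincar\'e pairing tensored with $\omega^{-1}$, the operator $\Delta_\infty$ equals (after a harmless linear change absorbing $\omega$) the sum $\sum_a\partial_{x^a}\partial_{\psi^a}$ over the $2n(b_1(M)+1)$ conjugate coordinate pairs, $x^a$ even and $\psi^a$ odd. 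Taking $\Delta_\infty$ as the leading term of a suitable finite filtration (see below), the first page is computed one pair at a time from
\[
H^*\!\left(\C[[x]]\otimes\wedge^{*}(\psi),\ \partial_x\partial_\psi\right)\ \cong\ \C\cdot\psi,
\]
which holds because $\partial_x$ is surjective on $\C[[x]]$; the completed tensor product over all pairs then shows the total $\Delta_\infty$-cohomology of $\OO(\H)$ is precisely the top-fermion line $\A_X((\hbar))\cdot\Omega_X^{\otimes(b_1(M)+1)}$. Since this is exactly $\mathrm{im}(l)$, the spectral sequence of the quotient has $E_1=0$, so the quotient is acyclic and $l$ is a quasi-isomorphism; consequently $l$ induces $H^*_{\mathrm{dR}}(X)((\hbar))\cong H^*\bigl(\OO(\H)((\hbar)),\,Q_\infty+\hbar\Delta_\infty+\rho(R)/\hbar\bigr)$.

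The genuinely delicate point is the legitimacy and convergence of this spectral sequence: $\OO(\H)=\widehat{\Sym}(\H^\vee)$ is a true completion, the dependence on $\hbar$ is Laurent, and --- crucially --- $\hbar\Delta_\infty$ and $\rho(R)/\hbar$ shift the power of $\hbar$ in opposite directions, so no filtration by polynomial or $\hbar$-adic degree makes both of them subleading. The remedy is to filter first by the form bidegree $(p,q)$ along $X$, bounded by $\dim_\C X$: then $\hbar\Delta_\infty$, $\nabla^{1,0}$ and the $q$-preserving part of $\rho(R)/\hbar$ are $q$-preserving while $\dbar$ and the remaining part of $\rho(R)/\hbar$ raise $q$; filtering the associated graded once more by $p$ absorbs the nilpotent $(2,0)$-curvature term and leaves $\hbar\Delta_\infty$ as the leading term. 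All filtrations are finite, so everything converges, the $\Delta_\infty$-computation above applies, and the outer pages compute the Dolbeault cohomology $\bigoplus_{p,q}H^q(X,\Omega_X^p)$, which coincides with $H^*_{\mathrm{dR}}(X)$ since the hyperk\"{a}hler manifold $X$ is K\"{a}hler. What remains is the routine verification that the $\rho(R)$-terms drop out on the top-fermion line at each stage and that the identification respects the normalization of $\Omega_X^{\otimes(b_1(M)+1)}$ fixed in Notation \ref{notn:top-fermion}.
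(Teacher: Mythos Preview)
Your argument is correct and follows essentially the same route as the paper: verify that $l$ is a cochain map using the three properties of the top fermion (annihilated by $\Delta_\infty$, by $\rho(R)/\hbar$ for type reasons, and $\nabla$-parallel), then filter by form degree on $X$ so that only $\hbar\Delta_\infty$ survives on the associated graded, and invoke the BV Poincar\'e lemma $H^*(\C[[x^i,\xi_i]],\sum_i\partial_{x^i}\partial_{\xi_i})=\C\cdot\xi_1\cdots\xi_n$. The paper uses a single bounded filtration by total $\A_X$-degree rather than your two-step $(q,p)$ filtration, and your closing remark about global Dolbeault cohomology is extraneous since the statement and your own local argument are at the sheaf level; otherwise the proofs coincide.
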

\begin{proof}
There is the following sequence of identities:
\begin{align*}
\left(Q_\infty+\hbar\Delta_\infty+\frac{\rho(R)}{\hbar}\right)\left(\alpha\otimes\Omega_X^{\otimes(b_1(M)+1)}\right)&\overset{(1)}{=}\left(\nabla+\hbar\Delta_\infty+\frac{\rho(R)}{\hbar}\right)\left(\alpha\otimes\Omega_X^{\otimes(b_1(M)+1)}\right)\\
&\overset{(2)}{=}\nabla\left(\alpha\otimes\Omega_X^{\otimes(b_1(M)+1)}\right)\\
&\overset{(3)}{=}d_X(\alpha)\otimes\Omega_X^{\otimes(b_1(M)+1)}.
\end{align*}
Here identity $(1)$ follows from the fact that the restriction of $Q_\infty$ on harmonic fields is just $\nabla$, identity $(2)$ follows from the following observations:
\begin{itemize}
 \item $\frac{\rho(R)}{\hbar}\cdot\Omega_X^{\otimes(b_1(M)+1)}=0$ by type reason: $\Omega_X^{\otimes(b_1(M)+1)}$ already contains the ``top fermion''.
 \item $\hbar\Delta_\infty$ annihilates $\Omega_X^{\otimes(b_1(M)+1)}$.
\end{itemize}
Identity $(3)$ follows from the compatibility of $\omega$ with $\nabla$. 

It follows immediately that $l$ is a cochain map, and that the induced map on cohomology sheaves is injective. The surjectivity follows easily from a spectral sequence associated to the filtration induced by degrees of forms in $\A_X$, and  the following Poincar\'{e} lemma:
\begin{lem}\label{lem:Poincare-lemma}
Let $\{x^i\}$ be even elements and let $\{\xi_i\}$ be odd elements, then we have
$$
H^*\left(\mathbb{C}[[x^1,\xi_1,\cdots,x^n,\xi_n]],\Delta=\sum_{i=1}^n\dfrac{\partial}{\partial
x^i}\dfrac{\partial}{\partial\xi_i}\right)=\mathbb{C}\xi_1\wedge\cdots\wedge\xi_n.
$$
\end{lem}
\end{proof}


This quasi-isomorphism of cochain complexes gives an identification of the global quantum observables of the Rozansky-Witten model and the de Rham coholomogy of $X$. In particular, this enables us to define the correlation functions of (closed) quantum observables in terms of integrals of differential forms on $X$:
\begin{defn}
Let $O$ be a quantum observable (at scale $\infty$) which is closed under the quantized differential:
$$
Q_\infty(O)+\fbracket{I[\infty], O}_\infty+\hbar\Delta_\infty O=0.
$$
Let $|H_1(M;\mathbb{Z})|'$ denote the number of torsion elements in $H_1(M;\mathbb{Z})$. Then the {\em correlation function} of $O$ is defined as:
$$
\langle O\rangle_M:=|H_1(M;\mathbb{Z})|'\cdot\int_{X} [e^{I[\infty]/\hbar}\cdot O].
$$
Here $[e^{I[\infty]/\hbar}\cdot O]$ denotes the corresponding de Rham cohomology class of $e^{I[\infty]/\hbar}\cdot O$ under the quasi-isomorphism (\ref{eqn:A_X-to-global-observable}).
\end{defn}
\begin{rmk}
We add the normalization factor $|H_1(M;\mathbb{Z})|'$ which is the number of torsion elements in $H_1(M;\mathbb{Z})$, in order to be consistent with the results in physics.
\end{rmk}

To conclude, we have an explicit algorithm of computing the correlation function of a global observable $O$ which is closed under the scale $\infty$ quantum differential $Q_\infty+\fbracket{I[\infty],-}_\infty+\hbar\Delta_\infty$. We start from the observable $e^{I[\infty]/\hbar}\cdot O$, which is a section of the BV bundle. Then we project it to the trivial line bundle in Notation \ref{notn:top-fermion} and take the coefficients with respect to the trivialization therein to obtain a differential form in $\mathcal{A}_X$, whose integral over $X$ is the correlation function of the observable $O$. Later, we will compute the partition function, i.e. the correlation of the observable $1$ as an example.
\begin{rmk}
In physics terminology, the ``top fermion'' $\Omega_X^{\otimes(b_1(M)+1)}$ gives rise to the volume form on the fermions, and the integral of $\alpha$ on $X$ arises from the volume form on the bosons. Thus, for a global quantum observble $O$, finding the cohomology class
$$
[O]\in H_{dR}(X)
$$
is essentially ``integration over the fermions''.
\end{rmk}

Using the factorization algebra structure, we can define the correlation function of local quantum observables:
\begin{defn}\label{definition:factorization-product}
Let $U_1, \cdots, U_n$ be disjoint open subsets of $M$. The {\em factorization product}
$$
  \text{Obs}^q(U_1)\times \cdots \times \text{Obs}^q(U_n)\to \text{Obs}^q(M)
$$
of local observables $O_i\in \text{Obs}^q(U_i)$ will be denoted by $O_1\star\cdots\star O_n$. This product descends to cohomologies.
\end{defn}

\begin{defn}
Let $U_1, \cdots, U_n$ be disjoint open subsets of $M$, and let $O_{i} \in \text{Obs}^q(U_i)$ be closed local quantum observables supported on $U_i$. We define their {\em correlation function} by
$$
 \abracket{O_1,\cdots, O_n}_M:=\abracket{O_1\star\cdots \star O_n }_{M}\in \C((\hbar)).
$$
\end{defn}

By the previous discussions, in order to compute the partition function, we only need to find the ``top fermions'' in $e^{I[\infty]/\hbar}$ which give rise to a top degree differential form on $X$, and integrate it on $X$. First we have the following lemma:
\begin{lem}
Let $\gamma$ be a connected graph with $|V(\gamma)|\geq 2$. If one of the input is a $3$-form on $M$, then the corresponding Feynman weight vanishes when restricted to the harmonic fields $\H$.
\end{lem}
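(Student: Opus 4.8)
The plan is to use that on a closed connected oriented $3$-manifold $M$ the space $\H^3(M)$ of harmonic $3$-forms is one-dimensional, spanned by the Riemannian volume form $\mathrm{vol}_M$; so, after restricting to harmonic inputs, the distinguished $3$-form input may be taken to be $\mathrm{vol}_M$ (tensored with a fixed vector in $\g_X[1]$), sitting on a tail attached to a vertex $v$ of $\gamma$. The Feynman weight then has the shape
$$
W_\gamma\bigl(\tilde{\mathbb{P}}_0^\infty, I_{cl}\bigr)(\dots,\mathrm{vol}_M,\dots)=\int_{M[V(\gamma)]}\pi_v^*(\mathrm{vol}_M)\wedge\prod_{e\in E(\gamma)}\pi_e^*\bigl(\tilde{\mathbb{P}}_0^\infty\bigr)\wedge\prod_{t}\pi_{v(t)}^*(\phi_t),
$$
the last product running over the remaining tails, which carry harmonic inputs $\phi_t$. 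Since $\gamma$ is connected with $|V(\gamma)|\geq 2$, the vertex $v$ is incident to at least one internal edge. First I would dispose of the easy possibility that some other tail at $v$ carries a form of positive de Rham degree: then $\pi_v^*(\mathrm{vol}_M)\wedge\pi_v^*(\phi_t)=\pi_v^*(\mathrm{vol}_M\wedge\phi_t)=0$ already on $M$, so $W_\gamma=0$; hence I may assume every other tail at $v$ carries a harmonic $0$-form, which is a locally constant function not affecting the form degree of the integrand.

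The next step is the observation that wedging with $\pi_v^*(\mathrm{vol}_M)=dx_v^1\,dx_v^2\,dx_v^3$, in local coordinates $x_v$ around the point labelled by $v$, annihilates every factor carrying a $dx_v$-leg, so only the $dx_v$-free part of each propagator $\pi_e^*(\tilde{\mathbb{P}}_0^\infty)$ with $e$ incident to $v$ survives. I would then push $W_\gamma$ forward along the projection $\pi_v\colon M[V(\gamma)]\to M$. Since $M[V(\gamma)]$ has dimension $3|V(\gamma)|$ and $\pi_v^*(\mathrm{vol}_M)$ has degree $3$, the factor $\prod_e\pi_e^*(\tilde{\mathbb{P}}_0^\infty)\wedge\prod_t\pi_{v(t)}^*(\phi_t)$ must have degree $3(|V(\gamma)|-1)$ for $W_\gamma$ not to vanish for degree reasons; as this matches the dimension $3(|V(\gamma)|-1)$ of the fibre of $\pi_v$, its fibre integral $F:=(\pi_v)_*\bigl(\prod_e\pi_e^*\tilde{\mathbb{P}}_0^\infty\wedge\prod_t\pi_{v(t)}^*\phi_t\bigr)$ is a function on $M$ and $W_\gamma=\int_M F\,\mathrm{vol}_M$, i.e. $W_\gamma$ is the $L^2$-pairing of $F$ with the constant function $1$.

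To finish one shows this pairing vanishes. The point is that the $dx_v$-free part of a propagator along an edge $e=\{v,w\}$ is the component of $\tilde{\mathbb{P}}_0^\infty=\int_0^\infty(d_M^*\otimes 1)\mathbb{K}_t\,dt$ which is a $0$-form in $x_v$; as a form in the $x_v$-variable (for $x_w$ fixed) this is $d_M^*$ applied to the corresponding component of the Green kernel, so $F$ is built out of factors lying in the image of $d_M^*$ acting in the $x_v$-variable. Integrating against $\mathrm{vol}_M$ and transferring a $d_M^*$ onto $\mathrm{vol}_M$ by integration by parts, using $d_M(\mathrm{vol}_M)=0$ (equivalently $d_M(1)=0$), forces the vanishing; when $v$ has a single incident edge this is immediate, and in general it follows by the same Stokes-theorem and boundary-integral analysis as in the proof of Theorem \ref{theorem: QME}, with $d_M(\mathrm{vol}_M)=0$ as the new input. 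The same conclusion can also be extracted directly from the explicit leading behaviour of $\tilde{\mathbb{P}}_0^\infty$ near the diagonal (Proposition \ref{prop:lift-propagator-boundary}, Lemma \ref{lemma:leading-term-K-t}), in the spirit of the type-reason vanishings of Lemmas \ref{lem:self-loop-vanishing} and \ref{lem:Kontsevich-vanishing}.

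The main obstacle is precisely this last step when $v$ is incident to several edges: one must make rigorous the reduction ``$\pi_v^*(\mathrm{vol}_M)$ kills all $dx_v$-legs'' across the boundary strata of $M[V(\gamma)]$, where near a collision the normal-coordinate change $(x,y)=(\exp_z u,\exp_z(-u))$ mixes $dz$ with $du$ and the clean splitting of forms into $dx_v$- and $dx_w$-legs degenerates. This calls for working stratum by stratum with the Axelrod--Singer stratification and the asymptotics of $\tilde{\mathbb{P}}_0^\infty$ recorded in this section, together with careful bookkeeping of which subsets of points may collide; once the integrand is controlled on each codimension-$\geq 1$ stratum, the vanishing follows as in the earlier lemmas.
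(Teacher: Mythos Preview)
Your reduction to the case where the only other tails at $v$ are harmonic $0$-forms is fine, and your treatment of the case where $v$ is incident to a \emph{single} internal edge is correct and matches the paper's first step: contracting $\tilde{\mathbb{P}}_0^\infty$ against a harmonic form at the univalent end gives $\int_0^\infty d^*e^{-t\Delta}(\text{harmonic})\,dt=0$.

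The gap is exactly where you locate it, but it is more serious than a matter of boundary bookkeeping. When $v$ has $k\geq 2$ incident edges $e_1,\dots,e_k$, the $dx_v$-free part of each $\pi_{e_i}^*(\tilde{\mathbb{P}}_0^\infty)$ is indeed of the form $f_i(x_v;\cdot)=d_{x_v}^*g_i(x_v;\cdot)$, so each $f_i$ has zero mean in $x_v$. But the quantity you must pair with $\mathrm{vol}_M$ is the \emph{product} $f_1\cdots f_k$, and a product of $d^*$-exact (hence mean-zero) functions has no reason to be mean-zero: already for $k=2$ one has $\int_M (d^*g_1)(d^*g_2)\,\mathrm{vol}_M=\langle g_1,\,dd^*g_2\rangle_{L^2}$, which does not vanish in general. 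No Stokes-type manoeuvre or stratum analysis of $M[V(\gamma)]$ repairs this, because the obstruction is algebraic (Leibniz for $d^*$ fails) rather than coming from boundary contributions.

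The paper does not try to win at the vertex $v$. After disposing of the univalent case as you do, it assumes $v$ is at least bivalent to internal edges and runs a global degree count over all vertices (each vertex must absorb exactly form-degree $3$; each propagator contributes total degree $2$; the $3$-form at $v$ forces all propagator ends at $v$ to have degree $0$, hence degree $2$ at their far ends). This counting forces the existence of some \emph{other} vertex $w$ which is either (i) bivalent to internal edges with only $0$-form tail inputs, or (ii) univalent to internal edges with harmonic inputs. In either situation the weight vanishes by the same ``propagator kills harmonic input at a univalent end'' mechanism (or an easy degree obstruction). So the correct fix for your multivalent case is not to push harder at $v$, but to shift attention to a different vertex singled out by this counting.
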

\begin{proof}
We can assume that the vertex allowing a harmonic $3$-form on $M$ is at least bivalent to internal edges, since otherwise $d^*$ in the propagator annihilates the harmonic $3$-form. A simple counting shows that  at least one of the following two situations must happen:
\begin{enumerate}
 \item There exists a vertex which is bivalent to internal edges whose inputs are all $0$-forms on $M$.
 \item There exists a vertex which is univalent to an internal edge whose input is a  harmonic form.
\end{enumerate}
In both  situations, the Feynman weights must vanish.
\end{proof}

Thus all the Feynman graphs in $e^{I[\infty]/\hbar}$ that contribute $3$-forms must contain a single vertex of the following form: 
$$
\figbox{0.3}{3-form-input}
$$
Suppose $\dim_\C(X)=2n$, then we have the following simple observations:
\begin{enumerate}
\item Every single vertex that corresponds to $\tilde{l}_0$ contributes a $(1,0)$ form in $\A_X$. The number of such vertices must be exactly $2n$ when we take the top fermion in $e^{I[\infty]/\hbar}$.
\item Every vertex in a connected Feynman graph $\gamma$ with $|V(\gamma)|\geq 2$ can only contribute a $(0,1)$-form on $X$, by the previous observation. And the number of all such vertices must also be $2n$, for the integration on $X$.
\end{enumerate}
\begin{rmk}
Roughly, the classical interaction $I_{cl}$ corresponds to the curvature of the connection $\nabla$ we chose at the beginning. In general, the curvature is the sum of $(2,0)$ and $(1,1)$ parts:
$$
R=R^{(1,1)}+R^{(2,0)},
$$
in which $F^{(1,1)}$ is also known as the Atiyah class. The above observations explain the reason why only the Atiyah classes contributes in the partition function of the Rozansky-Witten model.
\end{rmk}

Another constraint comes from the differential forms on the source $M$. Note that the single vertices labeled by $\tilde{l}_0$ already contribute all ``fermions'' corresponding to harmonic $3$-forms on $M$. The number of inputs of the form
$$
\alpha\otimes g\in\H^1(M)\otimes\g_X
$$
must be exactly $2n\cdot b_1$. Thus it remains to find all graphs (possibly non-connected) with non-trivial Feynman weights whose inputs are all of the above form. Such a graph $\gamma$ satisfies the following conditions:
\begin{enumerate}
 \item The number of vertices $|V(\gamma)|=2n$,
 \item The number of tails $|T(\gamma)|=2n\cdot b_1$, where the inputs are all of the form $\alpha\otimes g\in\H^1(M)\otimes\g_X$,
 \item The number of edges $|E(\gamma)|=\frac{3|V(\gamma)|-|T(\gamma)|}{2}=(3-b_1)n$. (This follows from the fact that tails and edges should contribute altogether a $6n$-form on $M^{2n}$).
\end{enumerate}
In particular, the third constraint implies that the partition function is trivial when $b_1(M)>3$. Another simple consequence is the following: the average valency of vertices (to both internal edges and tails) is $\frac{2|E(\gamma)|+|T(\gamma)|}{|V(\gamma)|}=3$. Thus each vertex must be exactly trivalent.
\begin{rmk}
These graphs are called ``minimal Feynman diagrams'' in \cite{RW}. What we have shown is that in our model the contribution of the Feynman weights of those ``non-minimal''  graphs is trivial in the partition function as the case in \cite{RW}.
\end{rmk}

We conclude with the following theorem:
\begin{thm}[=Theorem \ref{thm:main2}]\label{thm:partition-fcn-RW}
The partition function $\abracket{1}_M$ of our Rozansky-Witten model with domain $M$ and target manifold $X$ agrees with the original perturbative partition function $Z_X(M)$ \eqref{eqn:RW_partition_function} as computed by Rozansky and Witten in \cite{RW}.
\end{thm}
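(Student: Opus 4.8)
The plan is to reduce the statement to the explicit combinatorial description of $e^{I[\infty]/\hbar}$ assembled just above, and then match it term by term with Rozansky--Witten's Feynman expansion \eqref{eqn:RW_partition_function}. By definition $\abracket{1}_M=|H_1(M;\mathbb{Z})|'\cdot\int_X[e^{I[\infty]/\hbar}]$, where the class $[e^{I[\infty]/\hbar}]\in H^*_{dR}(X)$ is extracted via the quasi-isomorphism of Proposition \ref{prop:quasi-iso-jet-de-rham}: one projects the section $e^{I[\infty]/\hbar}$ of the BV bundle onto the top-fermion line $\Omega_X^{\otimes(b_1(M)+1)}$ of Notation \ref{notn:top-fermion}, reads off the coefficient as a form in $\A_X$, and integrates it over $X$. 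So the first step is pure bookkeeping: enumerate the Feynman graphs $\gamma$ (allowed to be disconnected, and with no self-loops by Lemma \ref{lem:self-loop-vanishing}) whose weight survives this projection. The type count carried out just before the theorem does exactly this: it forces $\gamma$ to be the disjoint union of $2n$ single vertices labelled $\tilde l_0$ --- each carrying a harmonic $3$-form input on $M$ and contributing a $(1,0)$-form on $X$ --- together with a graph $\Gamma$ all of whose vertices are trivalent, whose $2n\cdot b_1$ tails carry harmonic $1$-form inputs on $M$, whose $(3-b_1)n$ internal edges are labelled by $\tilde{\mathbb{P}}_0^\infty$, and each of whose vertices contributes a $(0,1)$-form on $X$. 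These $\Gamma$ are precisely the \emph{minimal Feynman diagrams} of \cite{RW}; in particular the relevant sum is finite and trivial once $b_1(M)>3$.

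The second step is to split, for each surviving $\gamma=(\text{$2n$ } \tilde l_0\text{-vertices})\sqcup\Gamma$, the weight into an analytic factor (an integral of forms over $M$, resp.\ over $X$) and a combinatorial factor, along the tensor decomposition of the heat kernel into the Poincar\'e pairing on $\A(M)$ and the Poisson kernel $\omega^{-1}$ on $\g_X$ noted after the definition of $\mathbb{K}_t$. The analytic $M$-factor of $\Gamma$ is the configuration-space integral $\int_{M[V(\Gamma)]}\prod_{e\in E(\Gamma)}\pi_e^*(\tilde{\mathbb{P}}_0^\infty)\cdot\prod_i\pi_{v_i}^*(\text{harmonic $1$-form})$; since $\tilde{\mathbb{P}}_0^\infty$ is by construction the smooth lift to $M[2]$ of the Green kernel of $d_M^*/\Delta$, this is precisely the Axelrod--Singer/Kontsevich-type integral defining $I_\Gamma(M)$ in \eqref{eqn:RW_partition_function}. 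The remaining analytic data --- the $(1,0)$-forms from the $2n$ vertices $\tilde l_0$ assembling into $\omega^n=\Omega_X$, and the $(0,1)$-forms from the trivalent vertices of $\Gamma$ --- wedge into a top-degree $(2n,2n)$-form on $X$, whose integral over $X$ remains to be identified.

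The third step is to check that this $X$-integral, together with the $\omega^{-1}$-contractions along the edges of $\Gamma$, reproduces the weight $b_\Gamma(X)$. By trivalency every vertex of $\Gamma$ is labelled by $\tilde l_2$, whose tensor is the curvature $R$ of $\nabla$; the type selection --- the $(2,0)$-part of $R$ and the $\tilde l_0$-vertices having already saturated the $(1,0)$-forms on $X$ --- leaves only the $(1,1)$-component $R^{1,1}$, the Atiyah class, at each vertex. Lowering an index by $\omega$ renders $R^{1,1}_{\bar i jkl}$ symmetric in $(k,l)$, and contracting the three $T_X$-legs of each vertex against the copies of $\omega^{-1}$ on the incident edges is exactly the weight-system contraction of \cite{RW}; the factors $\tfrac{1}{(k+1)!}$ of \eqref{eqn:l-k-tilde} and $\tfrac{1}{|\mathrm{Aut}(\gamma)|}$ of \eqref{eqn:naive-quantization} combine with the Wick pairings into the usual symmetry factor, producing a closed $(0,2n)$-form representing $b_\Gamma(X)$. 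Wedging with $\Omega_X$, integrating over $X$, and restoring the normalization $|H_1(M;\mathbb{Z})|'$ then gives $\abracket{1}_M=\sum_\Gamma b_\Gamma(X)\,I_\Gamma(M)=Z_X(M)$, order by order in the loop (equivalently $\hbar$) expansion.

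I expect the genuine difficulty to be that last step: pinning down, with all signs, factorials and automorphism factors, that the index-contraction pattern of the Atiyah classes here agrees on the nose with Rozansky--Witten's weight system, and that the Bianchi-type symmetries of $R^{1,1}$ built into the $L_\infty$ / holomorphic-Weyl-bundle structure are exactly the ones making $b_\Gamma(X)$ well defined on trivalent graphs modulo IHX. One must also confirm that nothing extraneous enters the partition function: higher brackets $l_k$ with $k\geq 3$ are excluded by the trivalency count, self-loops by Lemma \ref{lem:self-loop-vanishing}, and the framing corrections $CS_{\tilde\alpha_k}$ of \eqref{eqn:interaction-choice-of-framing} contribute only at strictly higher loop order with inputs that cannot supply the required top fermion, so they drop out term by term.
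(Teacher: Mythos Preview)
Your proposal is correct and follows essentially the same approach as the paper: project $e^{I[\infty]/\hbar}$ onto the top-fermion line, invoke the type count preceding the theorem to isolate the minimal trivalent graphs, and split each surviving weight into an analytic configuration-space integral $I_\Gamma(M)$ and a combinatorial Atiyah-class contraction $b_\Gamma(X)$. The paper's proof differs only in presenting the graph enumeration case-by-case over $b_1(M)\in\{0,1,2,3\}$ rather than abstractly, and it does not spell out the framing-correction or IHX remarks you add at the end.
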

\begin{proof}
We have shown in the previous part that if $b_1(M)>3$, then the partition function of $M$ vanishes. Thus we only need to prove the theorem for the cases $b_1(M)=0,1,2,3$ respectively. It is clear that the partition function of our Rozansky-Witten model consists of two parts. One is the analytic part contributed from the configuration spaces integral associated to the source $M$, which is the same as the physical definition of RW invariants. Another is the combinatorial part contributed from the differential forms on $X$, which are called the Rozansky-Witten classes of $X$ in \cite{Kapranov}. Since the classical interaction $I_{cl}$ labeling each vertex gives rise to the Atiyah class of $X$, the combinatorial parts of our Feynman weights are the same as the Rozansky-Witten classes in \cite{Kapranov}. Thus we only need to show that in our computation of the partition function, the graphs which contribute nontrivially are the same as those in \cite{RW}.
\begin{enumerate}
 \item $b_1(M)=3$. Let $\alpha_1,\alpha_2,\alpha_3$ be a basis for harmonic $1$-forms on $M$. We only need to consider all those trivalent graphs $\Gamma$ with $2n$ vertices, such that each of them will absorb the all the harmonic  $1$-forms $\alpha_1,\alpha_2$ and $\alpha_3$ on $M$. Thus all the vertices in $\Gamma$ are disconnected, and $\Gamma$ is unique.

 \item $b_1(M)=2$: Let $\alpha_1,\alpha_2$ be a basis for harmonic $1$-forms on $M$. The only trivalent graphs that will contribute non-trivially to the partition must contain $2n$-vertices, each of them exactly absorbs two harmonic forms $\alpha_1$ and $\alpha_2$. These $2n$ vertices must be connected by the $\infty$-scale propagator $P_0^\infty$ (the Green function) to form $n$ pairs.

 \item $b_1(M)=1$. Let $\alpha_1$ be a generator of harmonic $1$-forms on $M$. Since $\alpha_1\wedge\alpha_1=0$, each vertex in a graph $\Gamma$ can absorb exactly one $\alpha_1$. Such graphs $\Gamma$ must be the disjoint of circles with vertices on it, as shown in the following picture (n=3):
$$
\figbox{0.28}{betti-number-one-1}\hspace{8mm},\hspace{8mm}\figbox{0.28}{betti-number-one-2}\hspace{8mm},\hspace{8mm}\cdots
$$
 \item $b_1(M)=0$. Since there are no harmonic $1$-forms,  a graph $\Gamma$ that contributes non-trivially must contain exactly $2n$-vertices such that they are connected to each other by the Green function.
\end{enumerate}
In all these cases, the partition function coincides with that in \cite{RW}.
\end{proof}
\begin{rmk}
It is not difficult to see from the above proof that there could be more complicated graphs that contribute non-trivially to the partition function as $b_1(M)$ becomes smaller and the dimension of $X$ becomes bigger. For instance, for the case where $n=2$ and  $b_1(M)=0$, i.e. when $M$ is a rational homology sphere, there could be the following graphs:
$$
\figbox{0.3}{double-theta}\hspace{10mm}\figbox{0.25}{Benz}
$$
\end{rmk}
\begin{example}
Let $X$ be a K3 surface, then the graphs which contribute to the partition function can be shown as in the following pictures, depending on the first Betti number of the domain $M$:
\begin{enumerate}
 \item $b_1(M)=0$
       $$
       \figbox{0.23}{3-form-input}\hspace{8mm} \figbox{0.23}{3-form-input}
       $$
       $$
       \figbox{0.23}{genus-0-0}
       $$
 \item $b_1(M)=1$
       $$
       \figbox{0.23}{3-form-input}\hspace{8mm} \figbox{0.23}{3-form-input}
       $$
       $$
       \figbox{0.23}{genus-1}
       $$
  \item $b_1(M)=2$
       $$
       \figbox{0.23}{3-form-input}\hspace{8mm} \figbox{0.23}{3-form-input}
       $$
       $$
       \figbox{0.23}{genus-2}
       $$
   \item $b_1(M)=3$
       $$
       \figbox{0.23}{3-form-input}\hspace{8mm} \figbox{0.23}{3-form-input}
       $$
       $$
       \figbox{0.23}{genus-3}
       $$
\end{enumerate}
\end{example}
\begin{rmk}
We have omitted all the inputs that are $0$-forms on $M$ in the above pictures.
\end{rmk}

\appendix
\section{}\label{appendix:proof-regularization}
\subsubsection*{Proof of Lemma \ref{lemma:leading-term-K-t}}:
It is obvious that
\begin{align*}
&(\OO_x)_{[-2]}\left(6(4\pi)^{-\frac{3}{2}}e^{-\frac{||u||_z^2}{4t}}t^{-\frac{1}{2}}\Omega_l^ig^{lj}(z)d_{vert}u^k\right)\\
=&(\OO_x)_{[-2]}\left(6(4\pi)^{-\frac{3}{2}}e^{-\frac{||u||_z^2}{4t}}t^{-\frac{1}{2}}\Omega_l^ig^{lj}(z)(du^k+\Gamma_{mn}^kdz^mu^n)\right)\\
=&\left(\frac{\partial}{\partial t}-\frac{1}{4}g^{pq}(z)\left(\frac{\partial}{\partial u^p}-\Gamma_{pr}^\alpha(z)dz^ri\left(\frac{\partial}{\partial u^\alpha}\right)\right)\left(\frac{\partial}{\partial u^q}-\Gamma_{qs}^t(z)dz^si\left(\frac{\partial}{\partial u^t}\right)\right)\right)\\
 &\left(6(4\pi)^{-\frac{3}{2}}e^{-\frac{||u||_z^2}{4t}}t^{-\frac{1}{2}}\Omega_l^ig^{lj}(z)(du^k+\Gamma_{mn}^kdz^mu^n)\right)\\
=&\left(\frac{\partial}{\partial t}-\frac{1}{4}g^{pq}(z)\left(\frac{\partial}{\partial u^p}\frac{\partial}{\partial u^q}-2\Gamma_{pr}^\alpha(z)dz^r\frac{\partial}{\partial u^q}i\left(\frac{\partial}{\partial u^\alpha}\right)\right)\right)\\
 &\left(6(4\pi)^{-\frac{3}{2}}e^{-\frac{||u||_z^2}{4t}}t^{-\frac{1}{2}}\Omega_l^ig^{lj}(z)(du^k+\Gamma_{mn}^kdz^mu^n)\right)\\
\overset{(1)}{=}&6(4\pi)^{-\frac{3}{2}}e^{-\frac{||u||_z^2}{4t}}t^{-\frac{3}{2}}\Omega_l^ig^{lj}(z)d_{vert}u^k.
\end{align*}
Some details for the identity $(1)$:
\begin{equation}\label{eqn: identity-(1)-1}
\begin{aligned}
  &\frac{\partial}{\partial t}\left(6(4\pi)^{-\frac{3}{2}}e^{-\frac{||u||_z^2}{4t}}t^{-\frac{1}{2}}\Omega_l^ig^{lj}d_{vert}u^k\right)\\
=&(4\pi)^{-\frac{3}{2}}\left(-\frac{6}{2}e^{-\frac{||u||_z^2}{4t}}t^{-\frac{3}{2}}+\frac{||u||_z^2}{4t^2}6t^{-\frac{1}{2}}\right)\Omega_l^ig^{lj}d_{
vert}u^k,
\end{aligned}
\end{equation}
and
\begin{equation}\label{eqn: identity-(1)-2}
\begin{aligned}
 &\frac{1}{2}g^{pq}\Gamma_{pr}^\alpha dz^r\frac{\partial}{\partial u^q}\iota\left(\frac{\partial}{\partial
u^\alpha}\right)\left(6(4\pi)^{-\frac{3}{2}}e^{-\frac{||u||_z^2}{4t}}t^{-\frac{1}{2}}\Omega_l^ig^{lj}(du^k+\Gamma_{mn}^kdz^mu^n)\right)\\
=&(4\pi)^{-\frac{3}{2}}\frac{6}{2}\cdot
g^{pq}\Gamma_{pr}^kdz^r\left(-\frac{2g_{q\alpha}u^\alpha}{t}\right)e^{-\frac{||u||_z^2}{4t}}t^{-\frac{1}{2}}\Omega_l^ig^{lj}\\
=&(-6)(4\pi)^{-\frac{3}{2}}\Gamma_{pr}^ku^pe^{-\frac{||u||_z^2}{4t}}t^{-\frac{3}{2}}\Omega_l^ig^{lj}dz^r,
\end{aligned}
\end{equation}

\begin{equation}\label{eqn: identity-(1)-3}
\begin{aligned}
&-\frac{1}{4}g^{pq}\frac{\partial}{\partial u^p}\frac{\partial}{\partial
u^q}\left(6(4\pi)^{-\frac{3}{2}}e^{-\frac{||u||_z^2}{4t}}t^{-\frac{1}{2}}\Omega_l^ig^{lj}(du^k+\Gamma_{mn}^kdz^mu^n)\right)\\
=&-\frac{(4\pi)^{-\frac{3}{2}}}{4}g^{pq}\frac{\partial}{\partial
u^p}\left\{ 6\left(-\frac{2g_{q\alpha}u^{\alpha}}{t}\right)e^{-\frac{||u||_z^2}{4t}}t^{-\frac{1}{2}}\Omega_l^ig^{lj}(du^k+\Gamma_{
mn}^kdz^mu^n)+ 6e^{-\frac{||u||_z^2}{4t}}t^{-\frac{1}{2}}\Omega_l^ig^{lj}\Gamma_{mq}^kdz^m \right\}\\
=& \frac{6}{2}(4\pi)^{-\frac{3}{2}}g^{pq}g_{qp}e^{-\frac{||u||_z^2}{4t}}t^{-\frac{3}{2}}\Omega_l^ig^{lj}d_{vert}u^k+\frac{6}{2t}(4\pi)^{-\frac{3}{2}}g^{pq}g_{q\alpha}
u^\alpha\left(-\frac{2g_{p\beta}u^\beta}{t}\right)e^{-\frac{||u||_z^2}{4t}}t^{-\frac{1}{2}}\Omega_l^ig^{lj}d_{vert}u^k\\
&\qquad + \frac{6}{2t}(4\pi)^{-\frac{3}{2}}g^{pq}g_{q\alpha}u^\alpha e^{-\frac{||u||_z^2}{4t}}t^{-\frac{1}{2}}\Omega_l^ig^{lj}\Gamma_{mp}^kdz^m-(4\pi)^{-\frac{3}{2}}g^{pq}\cdot
\frac{3}{2}\left(-\frac{2g_{p\alpha}u^\alpha}{t}\right)e^{-\frac{||u||_z^2}{4t}}t^{-\frac{1}{2}}\Omega_l^ig^{lj}\Gamma_{mq}^kdz^m\\
=&\frac{18}{2}(4\pi)^{-\frac{3}{2}}e^{-\frac{||u||_z^2}{4t}}t^{-\frac{3}{2}}\Omega_l^ig^{lj}d_{vert}u^k-\frac{3}{2t^{\frac{5}{2}}}(4\pi)^{-\frac{3}{2}}||u||_z^2e^{-\frac{||u||_z^2 } { 4t }}\Omega_l^ig^{lj}d_{vert}u^k+\frac{6}{t^{\frac{3}{2}}}(4\pi)^{-\frac{3}{2}}u^pe^{-\frac{||u||_z^2}{4t}}\Omega_l^ig^{lj}\Gamma_{mp}^kdz^m.
\end{aligned}
\end{equation}
The identity $(1)$ follows from by adding \eqref{eqn: identity-(1)-1}, \eqref{eqn: identity-(1)-2} and \eqref{eqn:
identity-(1)-3}.

On the other hand, it is clear that
\begin{align*}
&g^{lj}(z)\Omega^i_li\left(\frac{\partial}{\partial u^i}\right)i\left(\frac{\partial}{\partial
u^j}\right)\left((4\pi)^{-\frac{3}{2}}e^{-\frac{||u||_z^2}{4t}}\det(g(z))\epsilon_{ijk}t^{-\frac{3}{2}}d_{vert}u^id_{vert}u^jd_{vert}
u^k\right)\\
=&-6(4\pi)^{-\frac{3}{2}}e^{-\frac{||u||_z^2}{4t}}t^{-\frac{3}{2}}\Omega_l^ig^{lj}(z)d_{vert}u^k
\end{align*}
Thus it remains to show the following:
\begin{equation}\label{eqn:two-derivatives}
\begin{aligned}
&\left(\frac{\partial}{\partial t}-\frac{1}{4}g^{ij}(z)\left(\frac{\partial}{\partial
u^i}-\Gamma_{ik}^l(z)dz^ki(\frac{\partial}{\partial u^l})\right)\left(\frac{\partial}{\partial
u^j}-\Gamma_{jm}^n(z)dz^mi(\frac{\partial}{\partial u^n})\right)\right)\\
&\left((4\pi)^{-\frac{3}{2}}e^{-\frac{||u||_z^2}{4t}}\det(g_{mn}(z))\epsilon_{ijk}t^{-\frac{3}{2}}d_{vert}u^id_{vert}u^jd_{vert}u^k\right)\\
&=0.
\end{aligned}
\end{equation}
We do this in detail. Note that if both derivatives $\frac{\partial}{\partial u^i}$ and $\frac{\partial}{\partial u^j}$ in equation \eqref{eqn:two-derivatives} act on $e^{-\frac{||u||_z^2}{4t}}$, the output will cancel with that of the operator $\frac{\partial}{\partial t}$ (the same as the computation in the flat case). Thus it is enough to show that the operator
$$
-g^{ij}(z)\left(\frac{\partial}{\partial u^i}-\Gamma_{ik}^l(z)dz^ki(\frac{\partial}{\partial u^l})\right)\left(\frac{\partial}{\partial u^j}-\Gamma_{jm}^n(z)dz^mi(\frac{\partial}{\partial u^n})\right)
$$
annihilates $\left(e^{-\frac{||u||_z^2}{4t}}\det(g_{mn}(z))\epsilon_{ijk}t^{-\frac{3}{2}}d_{vert}u^id_{vert}u^jd_{vert}u^k\right)$ if at least one derivative $\frac{\partial}{\partial u^i}$ does not act on $e^{-\frac{||u||_z^2}{4t}}$. First we have:
\begin{equation}\label{eqn:idenity-(2)-1}
 \begin{aligned}
  &-g^{pq}\Gamma_{pr}^l\Gamma_{qm}^ndz^rdz^m i(\frac{\partial}{\partial u^l})i(\frac{\partial}{\partial
u^n})\left((4\pi)^{-\frac{3}{2}}e^{-\frac{||u||_z^2}{4t}}\det(g)\epsilon_{ijk}t^{-\frac{3}{2}}d_{vert}u^id_{vert}u^jd_{vert}u^k\right)\\
=&-g^{pq}\Gamma_{pr}^i\Gamma_{qm}^jdz^rdz^m(4\pi)^{-\frac{3}{2}}e^{-\frac{||u||_z^2}{4t}}\det(g)\epsilon_{ijk}t^{-\frac{3}{2}}d_{vert}u^k,
 \end{aligned}
\end{equation}

\begin{equation}\label{eqn:idenity-(2)-2}
 \begin{aligned}
&\left(2g^{pq}\frac{\partial}{\partial u^p}\Gamma_{qm}^ndz^m i(\frac{\partial}{\partial u^n})\right)\left((4\pi)^{-\frac{3}{2}}e^{-\frac{||u||_z^2}{4t}}\det(g(z))\epsilon_{ijk}t^{-\frac{3}{2}}d_{vert}u^id_{vert}u^jd_{vert}u^k\right)\\
=&\left(2g^{pq}\frac{\partial}{\partial u^p}\Gamma_{qm}^idz^m\right)\left((4\pi)^{-\frac{3}{2}}e^{-\frac{||u||_z^2}{4t}}\det(g(z))\epsilon_{ijk}t^{-\frac{3}{2}}d_{vert}u^jd_{vert}u^k\right)\\
=&2g^{pq}\Gamma_{qm}^idz^m\frac{\partial}{\partial u^p}\left((4\pi)^{-\frac{3}{2}}e^{-\frac{||u||_z^2}{4t}}\det(g(z))\epsilon_{ijk}t^{-\frac{3}{2}}(du^j+\Gamma_{\alpha\beta}^jdz^\alpha u^\beta)d_{vert}u^k\right)\\
=&2g^{pq}\Gamma_{qm}^idz^m\left((4\pi)^{-\frac{3}{2}}e^{-\frac{||u||_z^2}{4t}}\det(g(z))\epsilon_{ijk}t^{-\frac{3}{2}}
\Gamma_{\alpha p}^jdz^\alpha d_{vert}u^k\right)\\
&\qquad - 4\Gamma_{qm}^idz^mu^q (4\pi)^{-\frac{3}{2}}e^{-\frac{||u||_z^2}{4t}}\det(g(z))\epsilon_{ijk}t^{-\frac{5}{2}}
 d_{vert}u^jd_{vert}u^k.
\end{aligned}
\end{equation}
And we have
\begin{equation}\label{eqn:idenity-(2)-3}
 \begin{aligned}
 &(-g^{pq}\frac{\partial}{\partial u^p}\frac{\partial}{\partial
u^q})\left((4\pi)^{-\frac{3}{2}}e^{-\frac{||u||_z^2}{4t}}\det(g(z))\epsilon_{ijk}t^{-\frac{3}{2}}d_{vert}u^id_{vert}u^jd_{vert}u^k\right)\\
&\qquad - \det(g(z))\epsilon_{ijk}t^{-\frac{3}{2}}d_{vert}u^id_{vert}u^jd_{vert}u^k(-g^{pq}\frac{\partial}{\partial
u^p}\frac{\partial}{\partial u^q})((4\pi)^{-\frac{3}{2}}e^{-\frac{||u||_z^2}{4t}})\\
=&(4\pi)^{-\frac{3}{2}}e^{-\frac{||u||_z^2}{4t}}\det(g(z))\epsilon_{ijk}t^{-\frac{3}{2}}(-g^{pq}\frac{\partial}{\partial u^p}\frac{\partial}{\partial
u^q})(du^i+\Gamma_{\alpha\beta}^idz^\alpha u^\beta)(du^j+\Gamma_{\gamma\epsilon}^jdz^\gamma u^\epsilon)d_{vert}u^k\\
&\qquad - 2g^{pq}\frac{\partial}{\partial u^q}\left((4\pi)^{-\frac{3}{2}}e^{-\frac{||u||_z^2}{4t}}\right)\frac{\partial}{\partial
u^p}\left(\det(g(z))\epsilon_{ijk}t^{-\frac{3}{2}}(du^i+\Gamma_{\alpha\beta}^idz^\alpha u^\beta)d_{vert}u^jd_{vert}u^k\right)\\
=&-(4\pi)^{-\frac{3}{2}}g^{pq}e^{-\frac{||u||_z^2}{4t}}\det(g(z))\epsilon_{ijk}t^{-\frac{3}{2}}\Gamma_{\alpha p}^idz^\alpha\Gamma_{\gamma q}^jdz^\gamma d_{vert}u^k\\
&\qquad + (4\pi)^{-\frac{3}{2}}\frac{4u^p}{t^{\frac{5}{2}}}e^{-\frac{||u||_z^2}{4t}}\det(g(z))\epsilon_{ijk}\Gamma_{\alpha p}^idz^\alpha d_{vert}u^jd_{vert}u^k.
 \end{aligned}
\end{equation}
Adding equations \eqref{eqn:idenity-(2)-1}, \eqref{eqn:idenity-(2)-2} and \eqref{eqn:idenity-(2)-3}, we get the desired
cancellation.

It is easy to check that the leading term of $d_x^*$ with respect to the total degree is given by
$$
(d_x^*)_{[-2]}=-\frac{1}{4}g^{ij}(z)i\left(\frac{\partial}{\partial u^i}\right)\left(\frac{\partial}{\partial
u^j}-(\Gamma_{kj}^ldz^k)i(\frac{\partial}{\partial u^l})\right).
$$
Thus the leading singularity of $d_x^*K_t$ is given by $(d_x^*K_t)_{[-2]}=(d_x^*)_{[-2]}(K_t)_{[0]}$. First notice that we have
\begin{align*}
&-\frac{1}{4}g^{mn}(z)i\left(\frac{\partial}{\partial u^m}\right)\left(\frac{\partial}{\partial
u^n}-(\Gamma_{kn}^pdz^k)i(\frac{\partial}{\partial
u^p})\right)\left((4\pi)^{-\frac{3}{2}}e^{-\frac{||u||_z^2}{4t}}\det(g(z))\epsilon_{ijk}\left(t^{
-\frac{1}{2}} \Omega_l^ig^{lj}(z)d_{vert}u^k\right)\right)\\
=&-\frac{1}{4}g^{mn}(z)i\left(\frac{\partial}{\partial u^m}\right)\left(\frac{\partial}{\partial
u^n}\right)\left((4\pi)^{-\frac{3}{2}}e^{-\frac{||u||_z^2}{4t}}\det(g(z))\epsilon_{ijk}\left(t^{
-\frac{1}{2}} \Omega_l^ig^{lj}(z)d_{vert}u^k\right)\right)\\
=&\ \frac{1}{4}g^{mn}\cdot\frac{2g_{pn}u^p}{t}\delta_m^k\left((4\pi)^{-\frac{3}{2}}e^{-\frac{||u||_z^2}{4t}}\det(g(z))\epsilon_{ijk}t^{
-\frac{1}{2}} \Omega_l^ig^{lj}(z)\right)\\
=&\ \frac{u^k}{2t^{3/2}}(4\pi)^{-\frac{3}{2}}e^{-\frac{||u||_z^2}{4t}}\det(g(z))\epsilon_{ijk} \Omega_l^ig^{lj}(z).
\end{align*}
On the other hand, there is
\begin{align*}
&-\frac{1}{4}g^{mn}(z)i\left(\frac{\partial}{\partial u^m}\right)\left(\frac{\partial}{\partial
u^n}-(\Gamma_{pn}^ldz^p)i(\frac{\partial}{\partial u^l})\right)
\left((4\pi)^{-\frac{3}{2}}e^{-\frac{||u||_z^2}{4t}}\det(g(z))\epsilon_{ijk}t^{-\frac{3}{2}}d_{vert}u^id_{vert}u^jd_{vert}
u^k\right)\\
=&-\frac{1}{4}g^{mn}(z)i\left(\frac{\partial}{\partial u^m}\right)\left(\frac{\partial}{\partial
u^n}\right)\left((4\pi)^{-\frac{3}{2}}e^{-\frac{||u||_z^2}{4t}}\det(g(z))\epsilon_{ijk}t^{-\frac{3}{2}}d_{vert}u^id_{vert}u^jd_{vert}
u^k\right)\\
&\qquad + \frac{1}{4}g^{mn}(z)\Gamma_{pn}^ldz^pi\left(\frac{\partial}{\partial u^m}\right)i\left(\frac{\partial}{\partial
u^l}\right)\left((4\pi)^{-\frac{3}{2}}e^{-\frac{||u||_z^2}{4t}}\det(g(z))\epsilon_{ijk}t^{-\frac{3}{2}}d_{vert}u^id_{vert}u^jd_{vert}
u^k\right)\\
=&-\frac{1}{4}g^{mn}(z)\frac{\partial}{\partial u^n}\left((4\pi)^{-\frac{3}{2}}e^{-\frac{||u||_z^2}{4t}}\right)i\left(\frac{\partial}{\partial u^m}\right)\left(\det(g(z))\epsilon_{ijk}t^{-\frac{3}{2}}d_{vert}u^id_{vert}u^jd_{vert}u^k\right)\\
&\qquad - \frac{1}{4}g^{mn}(z)(4\pi)^{-\frac{3}{2}}e^{-\frac{||u||_z^2}{4t}}i\left(\frac{\partial}{\partial
u^m}\right)\frac{\partial}{\partial u^n}\left(\det(g(z))\epsilon_{ijk}t^{-\frac{3}{2}}d_{vert}u^id_{vert}u^jd_{vert}u^k\right)\\
&\qquad + \frac{1}{4}g^{in}(z)\Gamma_{pn}^jdz^p(4\pi)^{-\frac{3}{2}}e^{-\frac{||u||_z^2}{4t}}\det(g(z))\epsilon_{ijk}t^{-\frac{3}{2}}d_{vert}u^k\\
=&-\frac{1}{4}g^{mn}(z)\frac{\partial}{\partial u^n}\left((4\pi)^{-\frac{3}{2}}e^{-\frac{||u||_z^2}{4t}}\right)i\left(\frac{\partial}{\partial
u^m}\right)\left(\det(g(z))\epsilon_{ijk}t^{-\frac{3}{2}}d_{vert}u^id_{vert}u^jd_{vert}u^k\right)\\
&\qquad - \frac{1}{4}g^{jn}(z)\Gamma_{\alpha n}^idz^\alpha
(4\pi)^{-\frac{3}{2}}e^{-\frac{||u||_z^2}{4t}}\det(g(z))\epsilon_{ijk}t^{-\frac{3}{2}}d_{vert}u^k\\
&\qquad + \frac{1}{4}g^{in}(z)\Gamma_{pn}^jdz^p(4\pi)^{-\frac{3}{2}}e^{-\frac{||u||_z^2}{4t}}\det(g(z))\epsilon_{ijk}t^{-\frac{3}{2}}d_{vert}u^k\\
=&-\frac{1}{4}g^{mn}(z)\frac{\partial}{\partial u^n}\left((4\pi)^{-\frac{3}{2}}e^{-\frac{||u||_z^2}{4t}}\right)i\left(\frac{\partial}{\partial
u^m}\right)\left(\det(g(z))\epsilon_{ijk}t^{-\frac{3}{2}}d_{vert}u^id_{vert}u^jd_{vert}u^k\right)\\
=&-\frac{1}{4}g^{mn}(z)\left(-\frac{2}{t}\right)(4\pi)^{-\frac{3}{2}}e^{-\frac{||u||_z^2}{4t}}g_{ln}(z)u^l\delta_m^i\left(\det(g(z))\epsilon_{ijk}t^{-\frac{3}{2
}}d_{vert} u^jd_{vert}u^k\right)\\
=&3\cdot\frac{(4\pi)^{-\frac{3}{2}}}{2t^{5/2}}e^{-\frac{||u||_z^2}{4t}}\det(g(z))\epsilon_{ijk}u^id_{vert}u^jd_{vert}u^k,
\end{align*}
and the last statement of the lemma follows.

\begin{bibdiv}
\begin{biblist}


\bib{AB}{article}{
    author = {Atiyah, M. F.},
    author = {Bott, R.},
     title = {A {L}efschetz fixed point formula for elliptic complexes. {I}},
   journal = {Ann. of Math. (2)},
    volume = {86},
      date = {1967},
     pages = {374--407},
}

\bib{AS2}{article}{
   author={Axelrod, S.},
   author={Singer, I. M.},
   title={Chern-Simons perturabation theory. II},
   journal={J. Differential Geometry},
   volume={39},
   date={1994},
   number={1},
   pages={173--213},
}

\bib{Ayala-Francis15}{article}{
    AUTHOR = {Ayala, D.},
    AUTHOR = {Francis, J.}
     TITLE = {Factorization homology of topological manifolds},
   JOURNAL = {J. Topol.},
    VOLUME = {8},
      date = {2015},
    NUMBER = {4},
     PAGES = {1045--1084},
}

\bib{Bott-Cattaneo}{article}{
    AUTHOR = {Bott, R.},
    author = {Cattaneo, A.},
     TITLE = {Integral invariants of {$3$}-manifolds},
   JOURNAL = {J. Differential Geom.},
    VOLUME = {48},
      YEAR = {1998},
    NUMBER = {1},
     PAGES = {91--133},
      ISSN = {0022-040X}
}

\bib{Mnev}{article}{
    AUTHOR = {Cattaneo, A.},
    author = {Mn{\"e}v, P.},
     TITLE = {Remarks on {C}hern-{S}imons invariants},
   JOURNAL = {Comm. Math. Phys.},
    VOLUME = {293},
      YEAR = {2010},
    NUMBER = {3},
     PAGES = {803--836},
      ISSN = {0010-3616},
}

\bib{Kevin-book}{book}{
   author={Costello, K.},
   title={Renormalization and effective field theory},
   series={Mathematical Surveys and Monographs},
   volume={170},
   publisher={American Mathematical Society},
   place={Providence, RI},
   date={2011},
   pages={viii+251},
   isbn={978-0-8218-5288-0},
}

\bib{Kevin-CS}{article}{
   author={Costello, K.},
   title={A geometric construction of the Witten genus, II},
   eprint={arXiv:1111.4234 [math.QA]},
}

\bib{Costello-CS}{article}{
   author={Costello, K.},
   title={Renormalization and the Batalin-Vilkovisky formalism},
   eprint={arXiv:0706.1533 [math.QA]}
}

\bib{Costello-SUSY}{article}{
    AUTHOR = {Costello, K.},
     TITLE = {Notes on supersymmetric and holomorphic field theories in
              dimensions 2 and 4},
   JOURNAL = {Pure Appl. Math. Q.},
    VOLUME = {9},
      YEAR = {2013},
    NUMBER = {1},
     PAGES = {73--165},
      ISSN = {1558-8599},
}

\bib{Kevin-Owen}{book}{
   author={Costello, K.},
   author={Gwilliam, O.},
   title={Factorization algebras in quantum field theory. {V}olume {I} $\&$ {II}},
   publisher={Cambridge University Press},
   place={Cambridge},
   date={2017},
}

\bib{Fed}{article}{
    AUTHOR = {Fedosov, B. V.},
     TITLE = {A simple geometrical construction of deformation quantization},
   JOURNAL = {J. Differential Geom.},
    VOLUME = {40},
      YEAR = {1994},
    NUMBER = {2},
     PAGES = {213--238}
}

\bib{Grady-Li-Li}{article}{
   author={Grady, R.},
   author={Li, Q.},
   author={Li, S.},
   title={BV quantization and the algebraic index},
   eprint={arXiv:1507.01812 [math.QA]},
}

\bib{Habegger-Thompson}{article}{
    AUTHOR = {Habegger, N.},
    author = {Thompson, G.},
     TITLE = {The universal perturbative quantum 3-manifold invariant,
              {R}ozansky-{W}itten invariants and the generalized {C}asson
              invariant},
   JOURNAL = {Acta Math. Vietnam.},
    VOLUME = {33},
      YEAR = {2008},
    NUMBER = {3},
     PAGES = {363--419},
      ISSN = {0251-4184},
}

\bib{Hitchin-Sawon}{article}{
    AUTHOR = {Hitchin, N.},
    AUTHOR = {Sawon, J.},
     TITLE = {Curvature and characteristic numbers of hyper-{K}\"ahler
              manifolds},
   JOURNAL = {Duke Math. J.},
    VOLUME = {106},
      YEAR = {2001},
    NUMBER = {3},
     PAGES = {599--615},
}

\bib{K-Qiu-Zabzine}{article}{
    AUTHOR = {K{\"a}ll{\'e}n, J.},
    AUTHOR = {Qiu, J.},
    AUTHOR = {Zabzine, M.},
     TITLE = {Equivariant {R}ozansky-{W}itten classes and {TFT}s},
   JOURNAL = {J. Geom. Phys.},
    VOLUME = {64},
      YEAR = {2013},
     PAGES = {222--242},
}

\bib{Kapranov}{article}{
    AUTHOR = {Kapranov, M.},
     TITLE = {Rozansky-{W}itten invariants via {A}tiyah classes},
   JOURNAL = {Compositio Math.},
    VOLUME = {115},
      YEAR = {1999},
    NUMBER = {1},
     PAGES = {71--113},
}

\bib{Kapustin}{article}{
    AUTHOR = {Kapustin, A.},
     TITLE = {Topological field theory, higher categories, and their
              applications},
 BOOKTITLE = {Proceedings of the {I}nternational {C}ongress of
              {M}athematicians. {V}olume {III}},
     PAGES = {2021--2043},
      YEAR = {2010}
}

\bib{Kapustin-Rozansky}{article}{
    AUTHOR = {Kapustin, A.},
    author = {Rozansky, L.},
    author = {Saulina, N.},
     TITLE = {Three-dimensional topological field theory and symplectic
              algebraic geometry {I}},
   JOURNAL = {Nuclear Phys. B},
    VOLUME = {816},
      YEAR = {2009},
    NUMBER = {3},
     PAGES = {295--355},
}

\bib{Kapustin-Rozansky2}{article}{
    AUTHOR = {Kapustin, A.},
    AUTHOR = {Rozansky, L.},
     TITLE = {Three-dimensional topological field theory and symplectic
              algebraic geometry {II}},
   JOURNAL = {Commun. Number Theory Phys.},
    VOLUME = {4},
      YEAR = {2010},
    NUMBER = {3},
     PAGES = {463--549},
}

\bib{Kapustin-Saulina}{article}{
    AUTHOR = {Kapustin, A.},
    author = {Saulina, N.},
     TITLE = {Chern-{S}imons-{R}ozansky-{W}itten topological field theory},
   JOURNAL = {Nuclear Phys. B},
    VOLUME = {823},
      YEAR = {2009},
    NUMBER = {3},
     PAGES = {403--427},
}

\bib{Kontsevich-notes}{article}{
    author = {Kontsevich, M.},
     title = {Lectures at Harvard University}
}

\bib{Kontsevich}{article}{
    AUTHOR = {Kontsevich, M.},
     TITLE = {Feynman diagrams and low-dimensional topology},
 BOOKTITLE = {First {E}uropean {C}ongress of {M}athematics, {V}ol.\ {II}
              ({P}aris, 1992)},
    SERIES = {Progr. Math.},
    VOLUME = {120},
     PAGES = {97--121},
 PUBLISHER = {Birkh\"auser, Basel},
      YEAR = {1994},
}

\bib{Kontsevich2}{article}{
    AUTHOR = {Kontsevich, M.},
     TITLE = {Rozansky-{W}itten invariants via formal geometry},
   JOURNAL = {Compositio Math.},
    VOLUME = {115},
      YEAR = {1999},
    NUMBER = {1},
     PAGES = {115--127},
}

\bib{Li-Li}{article}{
    AUTHOR = {Li, Q.},
    author = {Li, S.},
     TITLE = {On the {B}-twisted topological sigma model and {C}alabi-{Y}au
              geometry},
   JOURNAL = {J. Differential Geom.},
    VOLUME = {102},
      YEAR = {2016},
    NUMBER = {3},
     PAGES = {409--484},
}

\bib{Li}{article}{
    author = {Li, S.},
     title = {personal communications},
}

\bib{Tim}{article}{
    author = {Nguyen, T.},
     TITLE = {Quantization of the nonlinear sigma model revisited},
   JOURNAL = {J. Math. Phys.},
    VOLUME = {57},
      YEAR = {2016},
    NUMBER = {8},
     PAGES = {082301, 40},
}

\bib{Qiu-Zabzine}{article}{
    AUTHOR = {Qiu, J.},
    AUTHOR = {Zabzine, M.},
     TITLE = {Odd {C}hern-{S}imons theory, {L}ie algebra cohomology and
              characteristic classes},
   JOURNAL = {Comm. Math. Phys.},
    VOLUME = {300},
      YEAR = {2010},
    NUMBER = {3},
     PAGES = {789--833},
}

\bib{Qiu-Zabzine2}{article}{
    AUTHOR = {Qiu, J.},
    AUTHOR = {Zabzine, M.},
     TITLE = {Knot invariants and new weight systems from general 3{D}
              {TFT}s},
   JOURNAL = {J. Geom. Phys.},
    VOLUME = {62},
      YEAR = {2012},
    NUMBER = {2},
     PAGES = {242--271},
}

\bib{Roberts-Sawon}{article}{
    AUTHOR = {Roberts, J.},
    AUTHOR = {Sawon, J.},
     TITLE = {Generalisations of {R}ozansky-{W}itten invariants},
 BOOKTITLE = {Invariants of Knots and 3-Manifolds ({K}yoto, 2001)},
    SERIES = {Geom. Topol. Monogr.},
    VOLUME = {4},
     PAGES = {263--279 (electronic)},
 PUBLISHER = {Geom. Topol. Publ., Coventry},
      YEAR = {2002},
}

\bib{Roberts-Willerton}{article}{
    AUTHOR = {Roberts, J.},
    AUTHOR = {Willerton, S.},
     TITLE = {On the {R}ozansky-{W}itten weight systems},
   JOURNAL = {Algebr. Geom. Topol.},
    VOLUME = {10},
      YEAR = {2010},
    NUMBER = {3},
     PAGES = {1455--1519},
}

\bib{RW}{article}{
    author = {Rozansky, L.},
    author = {Witten, E.},
     title = {Hyper-K\"{a}hler geometry and invariants of three-manifolds},
   journal = {Selecta Math.},
    volume = {3},
      year = {1997},
    number = {3},
     pages = {401--458},
}

\bib{Sawon}{article}{
   author = {Sawon, J.},
    title = {Rozansky-Witten invariants of hyperk\"ahler  manifolds},
     NOTE = {Thesis (Ph.D.)-- University of Cambridge},
     year = {1999},
}

\bib{Sawon2}{article}{
    AUTHOR = {Sawon, J.},
     TITLE = {Topological quantum field theory and hyperk\"ahler geometry},
   JOURNAL = {Turkish J. Math.},
    VOLUME = {25},
      YEAR = {2001},
    NUMBER = {1},
     PAGES = {169--194},
}

\bib{Teleman}{article}{
    author = {Teleman, C.},
     title = {Gauge theory and mirror symmetry},
    eprint = {arXiv:1404.6305 [math-ph]}
}

\bib{Thompson}{article}{
    AUTHOR = {Thompson, G.},
     TITLE = {Holomorphic vector bundles, knots and the {R}ozansky-{W}itten
              invariants},
   JOURNAL = {Adv. Theor. Math. Phys.},
    VOLUME = {5},
      YEAR = {2001},
    NUMBER = {3},
     PAGES = {457--481},
      ISSN = {1095-0761},
}

\bib{Xu}{article}{
    author = {Voglaire, Y.},
    author = {Xu, P.},
    title  = {Rozansky-Witten-type invariants from symplectic Lie pairs},
   JOURNAL = {Comm. Math. Phys.},
   VOLUME = {336},
      YEAR = {2015},
    NUMBER = {1},
     PAGES = {217--241},
      ISSN = {0010-3616},
}

\end{biblist}
\end{bibdiv}

\end{document}